\newcommand{\1}{\mathds{1}}
\newcommand{\R}{\mathbb{R}}  
\C\renewcommand{\C}{\mathbb{C}}\else\newcommand{\C}{\mathbb{C}}\fi 
\newcommand{\N}{\mathbb{N}}  
\newcommand{\deq}{%
  \mathrel{\vbox{\offinterlineskip\ialign{%
    \hfil##\hfil\cr
    $\scriptscriptstyle d$\cr
    \noalign{\kern.1ex}
    $=$\cr
}}}}
\newcommand{\landauO}[1]{\mathcal{O}\left(#1\right)} 
\newcommand{\stO}[1]{\mathcal{O}_\prec\left(#1\right)} 
\newcommand*{\defeq}{\mathrel{\vcenter{\baselineskip0.5ex \lineskiplimit0pt\hbox{\scriptsize.}\hbox{\scriptsize.}}}=}
\providecommand{\norm}[1]{\left\lVert#1\right\lVert} 
\providecommand{\abs}[1]{\left\lvert#1\right\rvert} 
\providecommand{\braket}[1]{\left\langle#1\right\rangle} 
\providecommand{\Cov}[1]{\mathbf{Cov}\left(#1\right)} 
\DeclareMathOperator{\arctanh}{arctanh}
\DeclareMathOperator{\E}{\mathbf{E}}
\DeclareMathOperator{\Var}{\mathbf{Var}}
\renewcommand{\P}{\mathbf{P}}
\DeclareMathOperator{\Tr}{Tr}
\newcommand\restr[2]{{
  \left.\kern-\nulldelimiterspace 
  #1 
  \vphantom{\big|} 
  \right|_{#2} 
  }}
\providecommand*{\diff}%
        {\@ifnextchar^{\DIfF}{\DIfF^{}}}
\def\DIfF^#1{%
        \mathop{\mathrm{\mathstrut d}}%
                \nolimits^{#1}\gobblespace
}
\def\gobblespace{%
        \futurelet\diffarg\opspace}
\def\opspace{\let\DiffSpace\! \ifx\diffarg(\let\DiffSpace\relax\else\ifx\diffarg\let\DiffSpace\relax\else\ifx\diffarg\{\let\DiffSpace\relax\fi\fi\fi\DiffSpace}   
\title{Fluctuations of Rectangular Young Diagrams of Interlacing Wigner Eigenvalues}
\author{L\'aszl\'o Erd\H{o}s$^\ast$ \quad Dominik Schr\"{o}der$^{\ast\dagger}$}
\address{IST Austria, Am Campus 1, 3400 Klosterneuburg, Austria}
\email{lerdos@ist.ac.at}
\email{dschroed@ist.ac.at}
\thanks{$^\ast$ Partially supported by ERC Advanced Grant No. 338804}
\thanks{$^\dagger$ Partially supported by the IST Austria Excellence Scholarship}
\subjclass[2010]{60B20, 15B52}
\keywords{Vershik-Kerov-Logan-Shepp curve, CLT, Young diagrams}
\date{\today}
\begin{document}

\maketitle
\begin{abstract}
 We prove a new CLT for the  \emph{difference} of linear eigenvalue statistics of a Wigner random matrix $H$ and its minor $\widehat H$
and find that the fluctuation is much smaller than the  fluctuations of the individual linear statistics, 
as a consequence of the strong correlation between the eigenvalues of $H$ and  $\widehat H$. In particular our theorem identifies 
the fluctuation  of Kerov's  rectangular Young diagrams, defined by
 the interlacing eigenvalues of $H$ and $\widehat H$, around their asymptotic shape, the Vershik-Kerov-Logan-Shepp curve. Young diagrams equipped with the Plancherel measure follow the same limiting shape.
 For this, algebraically motivated, ensemble a CLT has been obtained in \cite{Ivanov2002} which is structurally similar to our result but the variance is different,  indicating that the analogy between the two models has its limitations.
 Moreover, our theorem shows that Borodin's result \cite{Bor1} on the convergence of the spectral distribution of Wigner matrices to a Gaussian free field also holds in derivative sense. 
\end{abstract}

\section{Introduction}
There is a rich history of probabilistic models of essentially algebraic nature with surprising connections to random matrix theory. 
Examples include the longest increasing  subsequence in random permutations \cite{1998math.....10105B},
queuing processes \cite{Baryshnikov99guesand}, random tilings of a hexagon \cite{johansson2006eigenvalues}, poly-nuclear growth processes \cite{PhysRevLett.84.4882} and 1+1 dimensional exclusion processes
(see e.g. \cite{2012arXiv1212.3351B}  for a good overview of the topic).
 Recent years have seen a spectacular progress towards the KPZ universality that
is detected  in the extreme regimes. The intuition for the KPZ universality often comes from relating 
these model to the extreme eigenvalues of  random matrices.  Many of these models are related to a classical algebraic problem, the statistics of  Young tableaux from 
representation theory. 
 In this paper we focus on the bulk regime and we investigate the analogy between large Young diagrams equipped with the classical Plancherel measure
  and Kerov's \emph{rectangular Young diagrams}, originating
 from eigenvalue statistics of  minors of large random Wigner random matrices.  Their limiting shape curves   coincide.
 Here we identify the fluctuation of the rectangular Young diagrams
and establish the precise conditions when it is Gaussian and we compute
its correlation. We find  that the limiting behavior of the two diagram ensembles are not the same, even though in the extreme regime their statistics coincide.

Given an integer $N$ and a partition $N=\lambda_1+\lambda_2+\dots$ of $N$ into integers $\lambda_1\geq \lambda_2\geq\dots\geq 0$, the planar figure obtained as the union of consecutive rows consisting of $\lambda_1,\lambda_2,\dots$ unit square cells, is called the \emph{Young diagram} corresponding to $\lambda$ of size $\abs{\lambda}=N$.  Young diagrams of size $N$ are commonly considered as a probability space 
equipped with the \emph{Plancherel measure} $P_N(\lambda)\defeq d_\lambda^2/N!$, where $d_\lambda$ is the number of Young tableaux with given shape $\lambda$ (see, e.g. \cite{fulton1997young}).

The first major connection between Young diagrams and random matrix theory was established by Baik, Deift and Johansson who showed  in \cite{1998math.....10105B} that the distribution of $\lambda_1/\sqrt{N}$ with respect to $P_N(\lambda)$ asymptotically agrees with the distribution of the largest eigenvalue of an $N\times N$
 GUE matrix, hence it follows the Tracy Widom law \cite{TW}. Similar result  \cite{1999math......1118B} holds for
 $\lambda_2/\sqrt{N}$ and the second largest eigenvalue, and Okounkov \cite{Okounkov01012000}
 established  that the joint distribution of $\lambda_1/\sqrt{N},\ldots,\lambda_k/\sqrt{N}$ asymptotically follows that of the $k$ 
 largest eigenvalues of the GUE. Alternative proofs  are given in \cite{1999math......5032B,1999math......6120J}. In fact, in \cite{1999math......5032B} also sine kernel universality in the bulk regime (that is, correlation functions of rows $\lambda_k$ with $k\sim\sqrt N$) has been proven. 

To study the bulk behavior of Young diagrams, it is convenient to draw them in the Russian convention which is rotated by $45^\circ$ from the horizontal convention (see Figure \ref{fig:Youngtab}). In this way we can view the upper boundary the diagram as a continuous function $E\mapsto \lambda(E)$ such that $\lambda(E)\geq \abs{E}$ and $\lambda'(E)=\pm 1$, whenever it is defined. We can continuously extend this function by $\lambda(E)=\abs{E}$ outside the extent of the diagram. 
The limiting shape and the fluctuation of this curve under the Plancherel measure, after proper rescaling, has been determined:
\begin{align}\frac{1}{\sqrt{N}}\lambda(\sqrt{N}E) \approx \Omega(E)+\frac{2}{\sqrt{N}}\Delta(E),
\qquad N\to \infty, \label{eq:KerovCLT}\end{align}
 where  \[\Omega(E)\defeq \begin{cases}
\abs{E}&\text{if }\abs{E}\geq 2\\
\frac{2}{\pi}\left[E\arcsin\frac{E}{2}+\sqrt{4-E^2}\right]&\text{else}
\end{cases} \] is the \emph{Vershik-Kerov-Logan-Shepp curve}. The fluctuation term  $\Delta(E)$ is a generalized Gaussian process on the interval $[-2,2]$  
that can be defined by the trigonometric series \[\Delta(2\cos\theta)=\frac{1}{\pi}\sum_{k\geq 2}\frac{\xi_k\sin k\theta}{\sqrt k}\] of independent standard Gaussian random variables $\xi_k$.  
The limit shape has been independently identified in \cite{LOGAN1977206} and \cite{Kerov}, the fluctuation was proved  in \cite{Ivanov2002}
following Kerov's unpublished notes.

\begin{figure}
\centering
\includegraphics{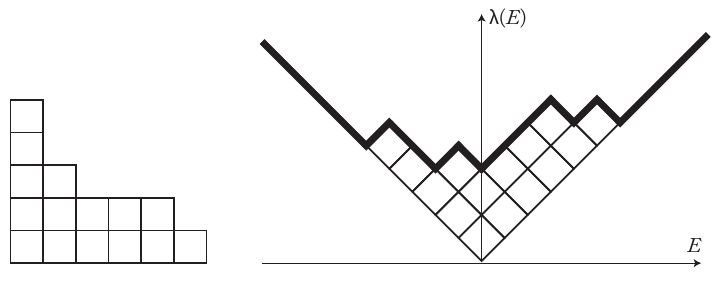}
\caption{Young diagram in French and Russian convention corresponding to the partition $15=6+5+2+1+1$, together with the curve $\lambda(E)$}\label{fig:Youngtab}
\end{figure}

A direct connection between random matrices and Young diagrams in the bulk regime was found by Kerov in \cite{Kerov93theasymptotics}. He showed that for a Wigner random matrix $H\in\C^{N\times N}$ and an independent random $N-1$ dimensional hyperplane $h$ with uniformly distributed normal vector, the eigenvalues $\lambda_1,\dots,\lambda_N$, and $\widehat\lambda_1,\dots,\widehat\lambda_{N-1}$ of $H$ and $P_h H P_h$, where $P_h$ is the projection onto $h$, can be used to construct a curve very similar to Young diagrams. He defined a \emph{rectangular Young diagram} (for a more general context, see \cite{kirillov1998kirillov}) as the function 
\[w_N(E)\defeq\sum_{k=1}^N\abs{\lambda_k-E}-\sum_{k=1}^{N-1}\abs{\widehat\lambda_k-E}, \qquad  E\in \R.  \] It is easy to see that $w_N$ is the unique piecewise-linear continuous function with local minima in $\lambda_1\leq\ldots\leq\lambda_N$ and local maxima in $\widehat{\lambda}_{1}\leq\ldots\leq\widehat{\lambda}_{N-1}$ such that the slope is $\pm 1$ whenever it exists and $w_N(E)=\abs{E-\sum\lambda_k+\sum\widehat\lambda_k}$ for large enough $\abs{E}$. It was then shown that \[\lim_{N\to\infty}\E w_N(E)=\Omega(E),\] uniformly in $E$.

 Bufetov  in \cite{Bufetov2013}  has recently  improved this result in two directions.
 First, he showed  that the randomness in the choice of the projection is not needed; it
is sufficient to consider the eigenvalues of $H$ and its minor $\widehat H=(h_{ij})_{i,j\geq 2}$  (where the choice of removed row/column
 is, of course, arbitrary). Second, he improved the convergence in expectation  to convergence
 in probability;  
 \begin{align}\lim_{N\to\infty}\sup_E\abs{w_N(E) -\Omega(E)}=0.
 \label{eq:bufetov}\end{align} 

We note that $\sum_k \lambda_k = \sum_k \widehat\lambda_{k-1} + h_{11}$, so $w_N(E) = \abs{E-h_{11}}$ for large $E$ and thus it does not exactly match $\Omega(E)$ even outside of the limiting spectrum $[-2,2]$. To remedy this, we will also consider the {\it shifted diagram}
 \[\widetilde w_N(E)\defeq w_N(E+h_{11})\] which agrees identically with $\Omega(E)$ outside the spectrum.  This modification is irrelevant for the limit shape but it becomes relevant when we consider fluctuations.
Figure \ref{fig:exampleDiagrams} shows realizations of $\widetilde w_N$ for different values of $N$ together with the limiting curve $\Omega$. 

In the present work we upgrade the law of large numbers type results \eqref{eq:bufetov} to a central limit theorem (CLT) as in \eqref{eq:KerovCLT}, and thus demonstrate that a certain
 analogy between random matrices and representation theory extends beyond the macroscopic behavior. Specifically, we prove that  \begin{align}\label{YoungCLT}w_N(E)&\approx\Omega(E) + \frac{1}{\sqrt N}\left[ \widehat\Delta(E)+ \xi_{11}\frac{E\sqrt{(4-E^2)_+}+4\arcsin E/2}{2\pi} \right],\\\label{youngCLT2} \widetilde w_N(E)&\approx\Omega(E) + \frac{1}{\sqrt N}\left[ \widehat\Delta(E)+ \xi_{11}\frac{E\sqrt{(4-E^2)_+}}{2\pi} \right]\end{align} where  $\widehat\Delta(E)$ are collections of centered Gaussian random variables whose covariance structure we explicitly compute
  and  $\xi_{11}=\sqrt{N} h_{11}$ is independent of them. Therefore the fluctuations of $w_N$ and $\widetilde w_N$ are Gaussian if and only if $h_{11}$ is Gaussian. We also conclude from our explicit formulas for the variances that although \eqref{YoungCLT} resembles \eqref{eq:KerovCLT}, 
  the distribution of the Gaussian part of the fluctuation, $\widehat\Delta(E)$ and $\Delta(E)$ do not agree. For example -- in contrast to $\Delta(E)$ -- the fluctuation term $\widehat\Delta(E)$ has a finite variance. 

Motivated by the preprint of the current paper, Sasha Sodin \cite{sodin}
considered another rectangular Young diagram $w_N^\ast$ obtained from
the interlacing roots and extrema of the characteristic polynomial of $H$.  
He found that \begin{align}\label{SodinCLT}w_N^\ast(E)\approx\Omega(E)+\frac{1}{N}\widetilde\Delta(E),\end{align} albeit in a weaker sense than \eqref{YoungCLT}, where $\widetilde\Delta(E)$ is a generalized Gaussian process closely related to $\Delta(E)$ in \eqref{eq:KerovCLT}.
In particular the fluctuations of $w_N^\ast$ are always Gaussian; the distribution of any specific matrix entry does not play
 a distinguished role. The difference between $w_N$ and $w_N^\ast$
can be understood via the Markov correspondence (see, e.g.~\cite{kerov1993transition}). Sodin pointed out 
 that the rectangular Young diagram $w_N$ created by a random matrix $H$ and its minor $\widehat H$ is related to the entrywise spectral measure
  $\rho_N$, defined as $\int f\diff \rho_N \defeq f(H)_{11},$ 
 while the empirical spectral density $\mu_N=\frac{1}{N}\sum_k \delta_{\lambda_k}$ corresponds to the rectangular Young diagram $w_N^\ast$.
Thus the behavior of $w_N^\ast$ is directly  related to  $\frac{1}{N}\Tr f(H)$ and not to $f(H)_{11}$ which also explains the difference in the size of the fluctuations between \eqref{YoungCLT} and \eqref{SodinCLT}. For more details on the relation of $w_N$ and $w_N^\ast$ we refer to \cite{sodin}.

\begin{figure} \begin{center}
\includegraphics{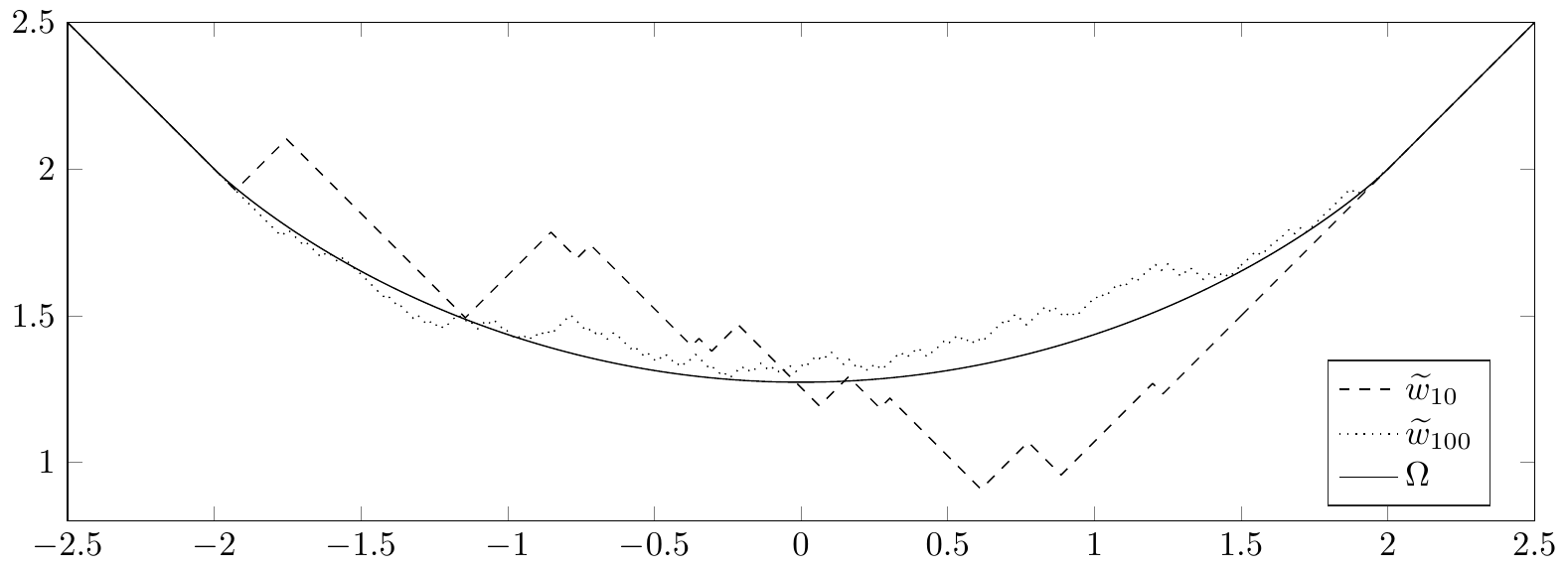}
\end{center} \caption{Sample rectangular Young diagrams $\widetilde w_N$ with limiting shape $\Omega$}\label{fig:exampleDiagrams}
\end{figure}

We prove our results \eqref{YoungCLT}--\eqref{youngCLT2} as corollaries to a new central limit theorem for the \emph{difference} in linear eigenvalue statistics of a Wigner random matrix and its minor. For many classes of random matrices $H=H^{(N)}\in\C^{N\times N}$ the empirical spectral density, i.e., the normalized counting measure of eigenvalues, $\frac{1}{N}\sum_{k=1}^N\delta_{\lambda_k}$ converges weakly to a deterministic measure $\rho$ as $N\to \infty$, which may be
viewed as  type of  law of large numbers. 
 Phrased in terms of an appropriate test function $f$, 
  \[\lim_{N\to\infty} \frac{1}{N}\sum_{k=1}^N f(\lambda_k(H^{(N)}))=\int f(x) \rho(\diff x),
\] 
naturally raises the question whether the fluctuations in this convergence also follow an analogue of the central limit theorem. The object $\sum_{k=1}^N f(\lambda_k(H^{(N)})) = \Tr f(H^{(N)})$, called the \emph{linear eigenvalue statistics} of $H^{(N)}$, has been studied  for many types of random matrices \cite{lytova2009,Anderson2006,Shcherbina2013,2011arXiv1101.3249S,2014arXiv1412.2445J,2013JSP...150...88B,2012arXiv1210.5666S}  
and large classes of test functions $f$. Contrary to the classical CLT, the fluctuations of the linear eigenvalue statistics do not grow with $N$, at least if $f$ is sufficiently regular. The  fluctuations
are typically  Gaussian, but there are also some pathological examples where this is not the case, e.g. for certain invariant ensembles with density supported on several intervals \cite{:/content/aip/journal/jmp/47/10/10.1063/1.2356796}.
For polynomial test functions $f$ the Gaussian fluctuation can be proved by  the elementary moment method, see e.g. 
\cite[Theorem 2.1.13]{AGZ}, but a simple approximation argument does not suffice to extend the result to less regular $f$.
CLT  still holds, for example
  it has been shown in \cite{lytova2009} that for GOE random matrices and test functions $f$ with bounded derivative
 $ \Tr f(H^{(N)}) -\E \Tr f(H^{(N)})$
converges in distribution to a centered Gaussian random variable of variance \begin{align}
\frac{1}{2\pi^2}\int_{-2}^2\int_{-2}^2\left(\frac{f(x)-f(x')}{x-x'}\right)^2\frac{4-xx'}{\sqrt{4-x^2}\sqrt{4-x'^2}}\diff x\diff x'
.\end{align} 
The currently  weakest regularity conditions on $f$ for CLT are found in \cite{2012arXiv1210.5666S}; $f\in H^{1+\epsilon}$ is necessary for general Wigner
matrices and $f\in C^{1/2+\epsilon}$ suffices for GUE. We stress that linear statistics are very sensitive to regularity of the test function; while
polynomial test functions do not require understanding of any local eigenvalue statistics (the global moment method works), the proof
in \cite{2012arXiv1210.5666S} for the  Wigner case heavily relied on techniques developed to prove local semicircle laws \cite{Erdos20121435},
while the GUE case even used the Br\'ezin-Hikami formula and saddle point analysis of the determinantal kernel by Johansson \cite{Joh}.
 
All previous work concerned linear statistics of a single Wigner matrix except 
 two papers by Borodin \cite{Bor1, Bor2} and a few recent works motivated by them. 
  In these papers joint fluctuations of linear statistics of Wigner matrices and its minors were investigated (see also \cite{johnson2014}
where a similar question was discussed for $d$-regular graphs).  Borodin considered
 general families of regularly nested minors and identified the limit  of their joint spectral counting functions
as a Gaussian free field (GFF), but the test function was polynomial
and thus a relatively simple extension of the moment method \cite{AGZ} worked.
The class of test functions was extended to include functions with
 a high Sobolev regularity ($H^{2.5+\epsilon}$ for Gaussian and $H^{5.5+\epsilon}$ for general Wigner matrices) using a Chebyshev basis decomposition
 \cite{2015arXiv150405933L} (see also \cite{kargin2015} where not only nested but
  overlapping matrices were considered).  However, all these results identify the joint fluctuations  on order one scale, whose correlations are typically strictly between 0 and 1  for a collection of  minors whose sizes asymptotically differ by $cN$. Our
work detects the  small fluctuation of order $N^{-1/2}$   resulting  from the very strong correlation between 
 minors of almost the same size. This fine effect is not visible on the scale of the analysis in \cite{Bor1, Bor2, 2015arXiv150405933L}.
 Nevertheless, one may ask whether the fine scale  covariance structure proven in  our main Theorem \ref{thm:mainThm}
is consistent with the covariance formula in \cite{Bor1, Bor2, 2015arXiv150405933L} if one formally applies it
to $H$ and its immediate minor $\widehat H$ disregarding the interchange of limits. Effectively this question 
is equivalent  to asking whether the convergence of the spectral counting functions of the minors to the GFF
also holds  in derivative sense. In Appendix \ref{sec:GFFcomp} we show that the derivative of the GFF
 predicts the correct variance of the fluctuations but fails to identify their distribution, in general. This is essentially due to the fact
 that our fine scale result depends on the precise distribution of $h_{11}$ while the macroscopic formula does not depend on any individual matrix entry. 

Inspired by Kerov's rectangular Young diagrams, in the present work we study the difference of 
two linear statistics $f_N\defeq \Tr f(H)-\Tr f(\widehat H)$ of a Wigner matrix and its minor for a large class of test functions
that includes $f(x) = \abs{x-E}$.
We find that the expectation of $f_N$  converges to \[\Omega_f\defeq \frac{1}{\pi}\int_{-2}^2 \frac{f(x)}{\sqrt{4-x^2}}\diff x\] and its fluctuations around $\Omega_f$ are  of order $N^{-1/2}$. In particular, the fluctuations we identify are much smaller than those  of the
individual  linear statistics, as a result of the strong correlation of the eigenvalues of $H$ and $\widehat H$. Moreover, we prove that the fluctuations are Gaussian if and only if $h_{11}$ follows a normal distribution. It is clear that $h_{11}$ plays a special role, since for example with $f(x)=x$, we have $f_N=\Tr H-\Tr\widehat H=h_{11}$.
Since our test function has a relatively low regularity, our proof requires to understand the spectral statistics on small mesoscopic scales.
In practice, we jointly analyze the Green functions $G(z)=(H-z)^{-1}$ and $\widehat G(z) =(\widehat H-z)^{-1}$ on a spectral scale
$\Im z\ge N^{-2/3}$.

After completing this manuscript, we learned\footnote{Private communication} from Vadim Gorin 
that he and Lingfu Zhang have obtained \cite{gorin2} the exact analogue of our result for the
multilevel extension of the $\beta$-Jacobi ensemble that was introduced in \cite{CPA:CPA21546} as an analogue of the minor process for general $\beta$-ensemble.

\emph{Acknowledgement.} The authors are grateful for discussions with Zhigang Bao and for advice on references from Alexei Borodin. 
 We thank Vadim Gorin for motivating the observation discussed in Appendix \ref{sec:GFFcomp}.

\section{Main Results}
We consider complex Hermitian and real symmetric random matrices and their minors of the form \[ H\defeq\left( \begin{matrix}
h_{11} &\dots& h_{N1}\\
\vdots & \ddots & \vdots\\
h_{1N} &\dots & h_{NN}
\end{matrix} \right),\qquad \widehat{H}\defeq\left( \begin{matrix}
h_{22} &\dots& h_{N2}\\
\vdots & \ddots & \vdots\\
h_{2N} &\dots & h_{NN}\end{matrix} \right)\]
with $(h_{ij})_{i,j=1}^N$ being independent (up to the symmetry constraint $h_{ij}=\overline{h_{ji}}$) random variables satisfying 
\begin{equation}\label{moments}
\E h_{ij}=0,\quad \E \abs{h_{ij}}^2=\frac{s_{ij}}{N}\quad\text{and}\quad \E \abs{h_{ij}}^p \leq \frac{\mu_p}{N^{p/2}}
\end{equation}
for all $i,j,p$ and some absolute constants $\mu_p$. Our main result about the difference of linear eigenvalue statistics of a Wigner random matrix and its minor is as follows.

\begin{theorem}\label{thm:mainThm}
Let the Wigner matrix $H$ satisfy \eqref{moments},  $s_{ij}=1$  for $i\not = j$ and $s_{ii}\le C$ for all $i$, $\E\abs{h_{1j}}^4=\sigma_4/N^2$ for $j=2,\dots,N$ and $\E h_{ij}^2=\sigma_2/N$ for $i<j$. 
 Moreover, let $f\in H^2([-10,10])$
 be some real-valued function. Then the random variables \[f_N\defeq \Tr f(H)-\Tr f(\widehat H)= \sum_{k=1}^N f(\lambda_k)-\sum_{k=1}^{N-1} f(\widehat\lambda_k)\quad\text{and}\quad\widetilde f_N\defeq\sum_{k=1}^N f(\lambda_k-h_{11})-\sum_{k=1}^{N-1} f(\widehat\lambda_k-h_{11})\]  are approximately given by \begin{align}\label{eq:mainEQ}f_N \approx \Omega_f + \frac{1}{\sqrt N} \left[ \Delta_f + \xi_{11}\int_{-2}^2 f'(x)\rho(x)\diff x \right]\quad\text{and}\quad \widetilde f_N \approx \Omega_f + \frac{1}{\sqrt N} \left[ \Delta_f + \xi_{11}\int_{-2}^2 \frac{x f''(x)}{2}\rho(x)\diff x \right],\end{align} where
 where $\rho(x) \defeq \frac{1}{2\pi}\sqrt{4-x^2}$ is the density of the semicircle law,
  \[\Omega_f\defeq \frac{1}{\pi}\int_{-2}^2\frac{f(x)}{\sqrt{4-x^2}}\diff x\] and $\Delta_f$ is a centered Gaussian random variable,
   independent of $\xi_{11}$.  Its variance 
 is given by the explicit formulas
  \begin{align}\nonumber\E (\Delta_f)^2&=V_f\defeq V_{f,1}+\abs{\sigma_2}^2V_{\sigma_2}+(\sigma_4-1)V_{f,2}\\ \label{eq:Vf1}V_{f,1}&=\int_{-2}^2f'(x)^2\rho(x)\diff x-\left(\int_{-2}^2 xf'(x)\rho(x)\diff x\right)^2-\left(\int_{-2}^2f'(x)\rho(x)\diff x\right)^2,\\ \label{eq:Vf2} V_{f,2}&= \left(\int_{-2}^2 xf'(x)\rho(x)\diff x\right)^2,\end{align} where 
  $V_{\sigma_2}$, as defined in eq.~\eqref{eq:Fsigma2}, is a correction term only needed when $\sigma_2\not=0$. For the special case of symmetric Wigner matrices $H$ where $\sigma_2=1$ holds automatically, we have $V_{\sigma_2}=V_{f,1}$.

More precisely, for any fixed $\epsilon>0$, 
\begin{equation}\label{expect}
\E f_N = \Omega_f +\landauO{N^{-2/3 +\epsilon}},\qquad \E \widetilde f_N = \Omega_f +\landauO{N^{-2/3+\epsilon}},
\end{equation}
 and \[ \sqrt N\left[f_N-\Omega_f\right]-\xi_{11}\int_{-2}^2 f'(x)\rho(x)\diff x \Rightarrow\Delta_f \quad\text{and}\quad \sqrt N\left[\widetilde f_N-\Omega_f\right]-\xi_{11}\int_{-2}^2 \frac{xf''(x)}{2}\rho(x)\diff x \Rightarrow \Delta_f\] converge in distribution to $\Delta_f$. Any fixed moment of these random variables converges at least at a rate of $\landauO{N^{-1/6+\epsilon}}$ to the corresponding Gaussian moments. 
\end{theorem}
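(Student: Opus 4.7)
\bigskip

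\textbf{Proof strategy.} My plan is to reduce the difference of traces to the logarithmic derivative of a single resolvent entry via the Schur complement, then feed this into the Helffer--Sj\"ostrand formula, and finally use the optimal isotropic local law for the resolvent $G(z)=(H-z)^{-1}$ on mesoscopic scales. The starting identity is
\[ \Tr G(z) - \Tr \widehat G(z) = \frac{\diff}{\diff z} \log G_{11}(z),\]
which follows from the Schur relation $\det(H-z) = \det(\widehat H - z)/G_{11}(z)$. For $f\in H^2$ with a quasi-analytic extension $\tilde f$, the Helffer--Sj\"ostrand representation, combined with one integration by parts in $z$, yields
\[ f_N \;=\; -\frac{1}{\pi}\int_{\C} \partial_{\bar{z}}\tilde f'(z)\, \log G_{11}(z)\, \diff A(z),\]
so the entire task is to produce a sufficiently precise expansion of $\log G_{11}(z)$ that is valid uniformly down to $\abs{\Im z}\ge N^{-2/3+\epsilon}$.

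\textbf{Expansion and deterministic limit.} The Schur complement gives $G_{11}(z) = [h_{11} - z - Z(z)]^{-1}$ where $Z(z)\defeq\sum_{i,j\ge 2} h_{1i}\widehat G_{ij}(z) h_{j1}$. Writing $Z(z) = \frac{1}{N}\Tr\widehat G(z) + Y(z)$ with $Y(z)$ centered conditionally on $\widehat H$, and using the semicircle equation $m(z)^2+zm(z)+1=0$, an algebraic manipulation produces
\[ \log G_{11}(z) \;=\; \log m(z) - \log\!\bigl[1 - m(z)\bigl(Y(z) - h_{11} + \mathcal{E}(z)\bigr)\bigr],\]
with $\mathcal{E}(z)\defeq\frac{1}{N}\Tr\widehat G(z)-m(z)=\landauO{N^{-1+\epsilon}}$ by the averaged local law for the minor. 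On the relevant contour the argument of the outer logarithm lies in a deterministic neighbourhood of $1$ with overwhelming probability, so it may be Taylor-expanded. The zero-th order term $\log m(z)$, inserted back into the Helffer--Sj\"ostrand identity, gives the deterministic limit $\Omega_f$ after collapsing the area integral to a boundary integral along $[-2,2]$ via the Plemelj formula for $m(x\pm i0)$. The linear-in-$h_{11}$ contribution, after the same boundary reduction, evaluates to $h_{11}\int_{-2}^2 f'(x)\rho(x)\diff x$, matching the $\xi_{11}/\sqrt N$ coefficient in \eqref{eq:mainEQ}; the remainders give $\landauO{N^{-2/3+\epsilon}}$ in expectation, yielding \eqref{expect}.

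\textbf{CLT for the quadratic form.} The nontrivial fluctuation $\Delta_f$ originates from the linear-in-$Y$ term $-m(z)Y(z)$ of the expansion. Inserting it into the Helffer--Sj\"ostrand representation expresses $\sqrt{N}\,\Delta_f$ as a bounded linear functional of the centered quadratic form $Y(\cdot)$. Its covariance is computed directly from \eqref{moments}: it splits into a ``Hermitian'' part proportional to $\frac{1}{N^2}\Tr[\widehat G(z)\widehat G(w)^\ast]$, a symmetry correction proportional to $\abs{\sigma_2}^2\frac{1}{N^2}\Tr[\widehat G(z)\widehat G(w)]$, and a fourth-cumulant correction proportional to $(\sigma_4-1-\abs{\sigma_2}^2)\frac{1}{N^2}\sum_{i}\widehat G_{ii}(z)\widehat G_{ii}(w)$. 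The local law $\widehat G_{ij}(z)\approx m(z)\delta_{ij}$ reduces each of these to an explicit double contour integral in $m(z),m(w)$, and unwrapping to the real axis via Plemelj produces exactly \eqref{eq:Vf1}--\eqref{eq:Vf2}. Gaussianity of $\Delta_f$ and convergence of all fixed moments follow from the cumulant expansion: higher-order joint cumulants of linear functionals of $Y(\cdot)$ involve traces of more than two resolvent factors and are suppressed on mesoscopic scales by the local law. Independence of $\Delta_f$ and $\xi_{11}$ is automatic since $Y(z)$ is a function of $\{h_{1j}\}_{j\ge 2}$ and $\widehat H$ only, while $h_{11}$ enters exclusively through the isolated linear term. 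The statement for $\widetilde f_N$ is then obtained by applying the same analysis to $\Tr f(H-h_{11}I)-\Tr f(\widehat H-h_{11}\widehat I)$ and Taylor expanding the resulting $h_{11}$-dependent $\Omega_{\,\cdot\,}$ and $\int f'\rho$ contributions; after cancellation the isolated $\xi_{11}$ coefficient becomes $\int x f''(x)\rho(x)/2\,\diff x$.

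\textbf{Main obstacle.} The hardest part will be the low regularity $f\in H^2$, which forces the Helffer--Sj\"ostrand contour all the way down to $\abs{\Im z}\sim N^{-2/3+\epsilon}$, where $\partial_{\bar{z}}\tilde f'(z)$ has only integrable mass. Controlling $\log G_{11}(z)$ uniformly at this scale requires both the optimal isotropic bound $\abs{G_{11}(z)-m(z)}\prec (N\Im z)^{-1/2}$ and the averaged bound $\abs{\mathcal{E}(z)}\prec (N\Im z)^{-1}$ from fluctuation averaging, as well as tight moment estimates on $Y(z)$ at this scale; propagating these through the cumulant expansion without losing the precise $N^{-1/2}$ order of $\Delta_f$ is the delicate step. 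A secondary but real issue is selecting a single-valued branch of $\log G_{11}$, handled by an a priori estimate $\abs{G_{11}(z)-m(z)}\le\tfrac12$ holding with very high probability on the contour.
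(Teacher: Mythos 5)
Your proposal is essentially the same as the paper's: Helffer--Sj\"ostrand formula applied to $\Delta_N(z)\defeq\Tr G(z)-\Tr\widehat G(z)$, the Schur complement identity to reduce $\Delta_N$ to the single entry $G_{11}$ (your $\partial_z\log G_{11}$ form is algebraically identical to the paper's $\widehat\Delta_N+\partial_z[\cdot]$ decomposition, since $G_{11}^{-1}=h_{11}-z-\braket{h,\widehat G h}$), a CLT for the centered conditional quadratic form $Y(z)$, and cumulant/fluctuation-averaging control of higher joint cumulants. The decomposition of the covariance into a Hermitian part, a $\abs{\sigma_2}^2$-weighted symmetry correction, and a $(\sigma_4-1)$ fourth-cumulant part is exactly what appears in the paper's equation for $\E_1[X(z)X(z')]$.

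The one point where your sketch understates a genuine difficulty is the sentence claiming the entrywise local law ``reduces each of these to an explicit double contour integral.'' For the Hermitian part $\frac{1}{N}\sum_{i\neq j}G_{ij}G'_{ji}$ the resolvent identity plus the \emph{averaged} local law indeed gives $\frac{m^2 m'^2}{1-mm'}$ directly, but the symmetry-correction term $\frac{1}{N}\sum_{i\neq j}G_{ij}(z)G_{ij}(z')$ (same index order, not transposed, present whenever $\sigma_2\neq 0$, hence for all real symmetric $H$) is not a trace of resolvent products and cannot be read off from the local law. In the paper this requires a separate argument: a self-consistent \emph{vector} equation involving the matrix $F_{jk}=\E h_{kj}^2$, inversion of $1-m(z)m(z')F$ with explicit control of its $\ell^\infty\to\ell^\infty$ norm, and a Bernoulli-number/$\tanh$ identity (Lemma on $\frac{1}{N}\sum_{i\neq j}G_{ij}G'_{ij}$). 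Moreover, both trace-of-resolvent lemmas need error bounds that remain summable after integration even when $z$ and $\bar z'$ are $\eta$-close across the real axis, where $\abs{1-m(z)m(\bar z')}^{-1}\sim\eta^{-1}$; this is handled via fluctuation averaging (not just the naive entrywise bound $G_{ij}=\stO{(N\eta)^{-1/2}}$, which would produce an error that is too large). These same tools are reused in the higher-cumulant step. So the overall architecture of your plan is right, but the ``explicit contour integral'' step hides the paper's most involved technical lemma.
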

\begin{remark}
Theorem \ref{thm:mainThm} shows that the fluctuations of $f_N$ and $\widetilde f_N$ are always Gaussian if $\int f'(x)\rho(x)\diff x=0$ or $\int f''(x) x\rho(x)\diff x=0$, respectively. For generic $f$ not fulfilling these conditions the fluctuations are Gaussian if and only if $h_{11}$ follows a Gaussian distribution. 
\end{remark}

By polarization identity, the limiting covariances of $\sqrt{N}\big[ f_N- \Omega_f\big]$ and $\sqrt{N}\big[ g_N- \Omega_g\big]$
may be obtained for any pair of functions $f,g $. In particular, Theorem~\ref{thm:mainThm} extends to complex test functions $f$
by considering its real and imaginary parts separately.  We also note that the condition $f\in H^2$ is not essential. 
The theorem holds for any $f\in H^1$, provided that $\int_{-2}^2\abs{\rho'(x) x f'(x)}\diff x<\infty$.
 Finally, we remark that the same statement holds for generalized Wigner matrices where we assume $s_{ij}=1$ only for $i=1$ and $j>1$. For $i\geq 2$ we only need to assume \begin{align}\sum_{j\geq 2}s_{ij}=N-1\label{eq:sij},\qquad\max_{i,j}s_{ij}\leq C\end{align} for some constant $C$. We leave it to the reader to check that our proof carries over with minor modifications to this general case, as well.  

 Applied to rectangular Young diagrams, this result translates to:
\begin{theorem} Let the Wigner matrix $H$ satisfy \eqref{moments},  $s_{ij}=1$ for $(i,j)\not=(1,1)$, $\E\abs{h_{1j}}^4=\sigma_4/N^2$ for $j=2,\dots,N$ and $\E h_{ij}^2=\sigma_2/N$ for $i<j$.  Then -- in the sense of Theorem \ref{thm:mainThm} and with the same error bounds -- we  asymptotically have 
\[w_N(E)\approx\Omega(E) + \frac{1}{\sqrt N}\left[ \widehat\Delta(E)+ \xi_{11}\frac{E\sqrt{(4-E^2)_+}+4\arcsin E/2}{2\pi} \right]\]and \[ \widetilde w_N(E)\approx\Omega(E) + \frac{1}{\sqrt N}\left[ \widehat\Delta(E)+ \xi_{11}\frac{E\sqrt{(4-E^2)_+}}{2\pi}\right],\] where $\widehat\Delta(E)$ is a centered Gaussian,
independent of $\xi_{11}$.  Its  variance  is given by the explicit formulas  
 \begin{align*}\E [\widehat\Delta(E)]^2&=V(E)\defeq V_1(E)+\abs{\sigma_2}^2 V_{\sigma_2}(E)+(\sigma_4-1)V_2(E),\\ V_1(E)&=1 - \frac{(4-E^2)_+^3}{9\pi^2} - \frac{\left(E\sqrt{(4-E^2)_+}+4\arcsin E/2\right)^2}{4\pi^2}, \quad V_2(E)=\frac{(4-E^2)_+^3}{9\pi^2},
\end{align*} where it is understood that $\arcsin(\pm x)=\pm\pi/2$ for $x>2$. The correction term $V_{\sigma_2}(E)$, that
 is only needed when $\sigma_2\not=0$, can be obtained via the general formula for $V_{\sigma_2}$ from \eqref{eq:Fsigma2}. For the special case of real symmetric $H$, we have $V_{\sigma_2}(E)=V_1(E)$. 
\end{theorem}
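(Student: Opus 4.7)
The plan is to obtain both asymptotic expansions as immediate corollaries of Theorem~\ref{thm:mainThm} applied to the Lipschitz test function $f(x)\defeq\abs{x-E}$. Indeed, by the very definition of the rectangular Young diagram, $w_N(E)=\Tr f(H)-\Tr f(\widehat H)=f_N$ and $\widetilde w_N(E)=\widetilde f_N$ for this particular choice of $f$, so the task reduces to (i) verifying that $f$ falls within the scope of Theorem~\ref{thm:mainThm} and (ii) explicitly evaluating in $E$ the five deterministic quantities $\Omega_f$, $\int_{-2}^2 f'(x)\rho(x)\diff x$, $\int_{-2}^2\tfrac{xf''(x)}{2}\rho(x)\diff x$, $V_{f,1}$ and $V_{f,2}$ that appear in its conclusion.

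Step (i) is the only conceptual point, since $f$ is Lipschitz but not $H^2$. Here we appeal to the $H^1$-extension of Theorem~\ref{thm:mainThm} announced in the remark immediately after it: it suffices that $f\in H^1$ and $\int_{-2}^2\abs{\rho'(x)xf'(x)}\diff x<\infty$. For $f(x)=\abs{x-E}$ one has the bounded derivative $f'(x)=\sgn(x-E)$ and the integrable weight $\abs{x\rho'(x)}=x^2/(2\pi\sqrt{4-x^2})$ on $(-2,2)$, so the hypothesis is satisfied. The distributional second derivative $f''=2\delta(\cdot-E)$, which appears in the $\widetilde w_N$-coefficient, is interpreted via the usual duality pairing, equivalently by integration by parts after mollifying $f$, whose limit is justified by the same weighted integrability.

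Step (ii) is a bookkeeping calculation using the antiderivative $F(x)\defeq\frac{x\sqrt{4-x^2}}{4\pi}+\frac{\arcsin(x/2)}{\pi}$ of $\rho$ and the antiderivative $-(4-x^2)^{3/2}/(6\pi)$ of $x\rho$. Splitting $\Omega_f$ at $x=E$ when $\abs{E}\leq 2$, and using that the integrand is affine on $[-2,2]$ when $\abs{E}>2$, reproduces the classical Vershik--Kerov--Logan--Shepp identity $\Omega_f=\Omega(E)$. One computes $\int_{-2}^2\sgn(x-E)\rho(x)\diff x=-2F(E)$, which matches the stated $\xi_{11}$-coefficient in the $w_N$ formula up to the sign convention for $\xi_{11}$, and the distributional pairing $\int\tfrac{xf''}{2}\rho=E\rho(E)=\frac{E\sqrt{(4-E^2)_+}}{2\pi}$ yields the $\widetilde w_N$-coefficient. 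For the variance, $(f')^2\equiv 1$ on $(-2,2)$ gives $\int_{-2}^2 (f')^2\rho=1$, while the second antiderivative yields $\int_{-2}^2 x\sgn(x-E)\rho(x)\diff x=(4-E^2)_+^{3/2}/(3\pi)$; substituting these values into \eqref{eq:Vf1}--\eqref{eq:Vf2} and invoking the clause $V_{\sigma_2}=V_{f,1}$ of Theorem~\ref{thm:mainThm} for real symmetric ensembles produces $V_{f,1}=V_1(E)$, $V_{f,2}=V_2(E)$ and $V_{\sigma_2}(E)=V_1(E)$ as claimed. No further random-matrix input is needed: the only genuine obstacle is the regularity gap between $H^2$ and $\abs{\cdot-E}\in H^1$, and this is resolved by the quoted extension.
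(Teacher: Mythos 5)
Your proposal is exactly the intended argument: the paper offers no separate proof of this theorem, presenting it as an immediate specialization of Theorem~\ref{thm:mainThm} to $f(x)=\abs{x-E}$, and your treatment of both steps is correct. You rightly identify the only nontrivial point — that $\abs{\cdot - E}$ is $H^1$ but not $H^2$ — and properly invoke the $H^1$ extension remark, verifying the required integrability of $\abs{\rho'(x)\,x\,f'(x)}$ and interpreting $\int \tfrac{x f''}{2}\rho$ distributionally (equivalently, by the integration by parts that the same integrability condition licenses). All five deterministic quantities are computed correctly from the elementary antiderivatives you quote; in particular the recovery of $\Omega_f=\Omega(E)$, the value $\int x\,\sgn(x-E)\rho = (4-E^2)_+^{3/2}/(3\pi)$, and the resulting match of $V_{f,1},V_{f,2}$ with $V_1(E),V_2(E)$ all check out, including the degenerate case $\abs{E}\ge 2$.

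One detail deserves a sharper statement than your phrase "up to the sign convention for $\xi_{11}$": the value of $\int_{-2}^2 f'(x)\rho(x)\diff x$ for $f'(x)=\sgn(x-E)$ is genuinely $-\tfrac{E\sqrt{(4-E^2)_+}+4\arcsin(E/2)}{2\pi}$, i.e.\ the negative of the coefficient printed in the paper's statement. This is not a convention but a sign slip in the theorem as stated: a direct check at $E>2$ gives $w_N(E)=E-h_{11}$, whereas the displayed formula (with $\Omega(E)=E$, $\widehat\Delta(E)\equiv 0$ since $V(E)=0$, and coefficient $+1$) would yield $E+h_{11}$. So your computation is the correct one and the paper's displayed sign should be flipped; nothing else in either argument is affected, since $V_1(E)$ only involves the square of this coefficient.
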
 
A simple inspection also shows that $\widetilde w_N$ not only becomes deterministic for $\abs{E}\ge 2$, but it has smaller fluctuation  than $w_N$ everywhere. Furthermore, both $w_N$ and $\widetilde w_N$ have fluctuations precisely of order $N^{-1/2}$ in $E\in[-2,2]$, while outside of this interval only $w_N$ has fluctuations of precisely order $N^{-1/2}$ and $\widetilde w_N$ has strictly smaller fluctuations.

\section{Variance Computation}
In this section we prove Theorem \ref{thm:mainThm} in the sense of mean and variance. The proof for higher moments and the convergence of distribution will be given in Section \ref{sec:moments}.  We first introduce a commonly used (see, e.g., \cite{2012arXiv1212.0164E}) notion of high-probability bound which helps in keeping the notation compact. 
\begin{definition}[Stochastic Domination]\label{def:stochDom}
If \[X=\left( X^{(N)}(u) \,\lvert\, N\in\N, u\in U^{(N)} \right)\quad\text{and}\quad Y=\left( Y^{(N)}(u) \,\lvert\, N\in\N, u\in U^{(N)} \right)\] are families of random variables indexed by $N$, and possibly some parameter $u$, then we say that $X$ is stochastically dominated by $Y$, if for all $\epsilon, D>0$ we have \[\sup_{u\in U^{(N)}} \P\left[X^{(N)}(u)>N^\epsilon  Y^{(N)}(u)\right]\leq N^{-D}\] for large enough $N\geq N_0(\epsilon,D)$. In this case we use the notation $X\prec Y$. Moreover, if we have $\abs{X}\prec Y$, we also write $X=\stO{Y}$.
\end{definition}
It can be checked (see \cite[Lemma 4.4]{2012arXiv1212.0164E}) that $\prec$ satisfies the usual arithmetic properties, e.g. 
if $X_1\prec Y_1$ and $X_2\prec Y_2$, then also  $X_1+X_2\prec Y_1 +Y_2$ and  $X_1X_2\prec Y_1 Y_2$.
We will say that a (sequence of) events  $A=A^{(N)}$  holds with \emph{overwhelming probability} if $\P (A^{(N)}) \ge 1- N^{-D}$ for any $D>0$ and $N\ge N_0(D)$.  In particular, under the conditions \eqref{moments}, we have $h_{ij}\prec N^{-1/2}$ and  $\max_k\abs{\lambda_k} \le 3$ with overwhelming
probability.

Let $\chi\colon\R\to\R$ be a smooth cut-off function which is constant $1$ inside $[-5,5]$ and constant $0$ outside $[-10,10]$. Now define \[f_\chi(x)\defeq f(x)\chi(x)\] and its almost analytic extension \[f_\C(x+i\eta)\defeq \left[f_\chi(x)+i\eta f'_\chi(x)\right]\chi(\eta).\] 
Clearly, $f_\C$ is bounded and compactly supported. Then, \begin{align*}\partial_{\bar z}f_\C(x+i\eta)&=\frac{1}{2}\frac{\partial}{\partial x}f_\C(x+i\eta)+\frac{i}{2}\frac{\partial}{\partial \eta}f_\C(x+i\eta)=\frac{i\eta}{2}\chi(\eta) f_\chi''(x)+\frac{i}{2}\chi'(\eta)\left[ f_\chi(x)+i\eta  f_\chi'(x)\right]
\end{align*}
 and we note that for small $\eta$, \begin{align}\label{fCestimate}\partial_{\bar z}f_\C(x+i\eta)=\landauO{\eta} \quad\text{and} \quad\partial_\eta\partial_{\bar z}f_\C(x+i\eta) =\landauO{1}.\end{align}  For real $\lambda$ we have \[f_\chi(\lambda)=\frac{1}{2i\pi}\int_{\C} \frac{\partial_{\bar z} f_\C(z)}{\lambda-z}\diff \bar z\wedge\diff z=\frac{1}{\pi}\int_{\R^2} \frac{\partial_{\bar z} f_\C(x+i\eta)}{\lambda-x-i\eta}\diff x \diff \eta\] whenever $f\in C^2(\R)$, as follows from Cauchy's Theorem. Since the left hand side of this equality is real, it suffices to integrate the real part of the integrand on the right hand side which conveniently is symmetric with respect to the real axis. Consequently, \begin{align}f_\chi(\lambda)=\frac{2}{\pi}\Re\int_\R\int_{\R_+}\frac{\partial_{\bar z} f_\C(x+i\eta)}{\lambda-x-i\eta}\diff \eta \diff x.\label{eq:HelfferSjostrand}\end{align} Eq.~\eqref{eq:HelfferSjostrand} is commonly known as the \emph{Helffer-Sj\"{o}strand formula} \cite{Helffer1989}. One can easily check that eq.~\eqref{eq:HelfferSjostrand} extends to $H^2(\R)$ functions.  The cut-off was chosen in such a way that with overwhelming probability $f(\lambda_k)=f_\C(\lambda_k)$ and $f(\lambda_k-h_{11})=f_\C(\lambda_k-h_{11})$ and therefore eq.~\eqref{eq:HelfferSjostrand} yields 
\begin{align}f_N&=\frac{2}{\pi}\Re\int_{\R}\int_{\R_+}\partial_{\bar z}f_\C(x+i\eta)\left[\Tr G(x+i\eta)-\Tr \widehat G(x+i\eta)\right]\diff \eta\diff x\label{eq:fN}\end{align} and \begin{align}\widetilde f_N&=\frac{2}{\pi}\Re\int_{\R}\int_{\R_+}\partial_{\bar z}f_\C(x+i\eta)\left[\Tr G(x+h_{11}+i\eta)-\Tr \widehat G(x+h_{11}+i\eta)\right]\diff \eta\diff x\label{eq:tildefN},\end{align}
where for convenience we defined \[H = \left( \begin{matrix}
h_{11}& h^*\\ h & \widehat H
\end{matrix} \right),\quad  H^{(1)}= \left( \begin{matrix}0 & 0\\0 & \widehat H
\end{matrix} \right), \quad G(z)\defeq(H-z )^{-1},\quad  \widehat G(z)\defeq (\widehat H-z)^{-1}, \quad G^{(1)}(z)\defeq (H^{(1)}-z)^{-1}.\] We also introduce the short hand notations \[\Delta_N(z)\defeq \Tr G(z)-\Tr \widehat G(z)\quad \text{and}\quad \widetilde\Delta_N(z)\defeq \Tr G(z+h_{11})-\Tr \widehat G(z+h_{11}).\]
From the Schur complement formula we find
\begin{align}\label{eq:mNtilde}
\Delta_N(z)=\frac{1+\braket{h,\widehat G(z)^2h}}{h_{11}-z-\braket{h,\widehat G(z)h}} \quad\text{and}\quad \widetilde\Delta_N(z)=\frac{1+\braket{h,\widehat G(z+h_{11})^2h}}{-z-\braket{h,\widehat G(z+h_{11})h}}.
\end{align}
The basic strategy now is that we identify the leading order behavior of these two expressions and then handle the fluctuations separately. To do so, we firstly exclude a critical area very close to the real line. Since \[\abs{\eta+\eta\braket{h,\widehat G(x+i\eta)^2h}}\leq \eta+\Im\braket{h,\widehat G(x+i\eta)h}\leq\abs{x_0+x+i\eta+\braket{h,\widehat G(x+i\eta)h}}\] for any $x_0\in\R$ we find that \[\abs{\eta \, \Delta_N(x+i\eta)}\leq 1\quad\text{and}\quad \abs{\eta \, \widetilde\Delta_N(x+i\eta)}\leq 1\] for all $\eta>0$. Therefore we can restrict our integrations in \eqref{eq:fN}--\eqref{eq:tildefN} 
 to the domain $\Im z>\eta_0\defeq N^{-2/3}$ and find that \[f_N=\frac{2}{\pi}\Re\int_{\R}\int_{\eta_0}^{10} \partial_{\bar z}f_\C(x+i\eta)\Delta_N(x+i\eta)\diff \eta\diff x+\stO{N^{-2/3}}\] and \[\widetilde f_N=\frac{2}{\pi}\Re\int_{\R}\int_{\eta_0}^{10} \partial_{\bar z}f_\C(x+i\eta)\widetilde\Delta_N(x+i\eta)\diff \eta\diff x+\stO{N^{-2/3}}.\] 
For $\Im z=\eta\geq\eta_0$ we claim that the leading order of $\Delta_N$ and $\widetilde\Delta_N$ is given by 
\begin{align}\label{eq:hatDelta}
\widehat\Delta_N (z)\defeq \frac{1+\frac{1}{N}\Tr \widehat G(z)^2}{-z-\frac{1}{N}\Tr \widehat G(z)}. 
\end{align} 
Accordingly, we split the proof effectively into two parts. We define \begin{align}
\widehat\Omega_f \defeq \frac{2}{\pi}\Re\int_{\R}\int_{\eta_0}^{10} \partial_{\bar z}f_\C(x+i\eta)\widehat\Delta_N (x+i\eta)\diff \eta\diff x \label{eq:OmegaF}
\end{align}
 and \begin{align} \label{eq:FN}
F_N &\defeq \frac{2}{\pi}\Re\int_{\R}\int_{\eta_0}^{10} \partial_{\bar z}f_\C(x+i\eta)\left[\Delta_N(x+i\eta)-\widehat\Delta_N (x+i\eta)\right]\diff \eta\diff x
\\ \widetilde F_N &\defeq \frac{2}{\pi}\Re\int_{\R}\int_{\eta_0}^{10} \partial_{\bar z}f_\C(x+i\eta)\left[\widetilde\Delta_N(x+i\eta)-\widehat\Delta_N (x+i\eta)\right]\diff \eta\diff x,\label{eq:tildeFN}\end{align} so that \[f_N=\widehat\Omega_f+F_N+\stO{N^{-2/3}}\quad\text{and}\quad \widetilde f_N=\widehat\Omega_f+\widetilde F_N+\stO{N^{-2/3}}.\]
\begin{proposition}[Leading Order]\label{prop:leading}
Under the assumptions of Theorem \ref{thm:mainThm} we have that \[ \widehat\Omega_f = \Omega_f + \stO{N^{-2/3}}.\tag*{\qedhere} 
\]
\end{proposition}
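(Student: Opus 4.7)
The plan is to identify the deterministic function that $\widehat\Delta_N(z)$ approximates, recognize that the Helffer--Sj\"ostrand formula applied to this limit exactly reproduces $\Omega_f$, and then control the pointwise approximation error via the local semicircle law for $\widehat H$. Let $m(z)$ denote the Stieltjes transform of the semicircle law, so that $m^2+zm+1=0$ and $-z-m=1/m$. Differentiating yields $m'=-m/\sqrt{z^2-4}$, and combining with $m(m+z)=-1$ a short calculation gives
\[
\widehat m(z)\defeq \frac{1+m'(z)}{-z-m(z)}=m(z)\bigl(1+m'(z)\bigr)=-\frac{1}{\sqrt{z^2-4}},
\]
which is the Stieltjes transform of the arcsine density $\tilde\rho(x)\defeq(\pi\sqrt{4-x^2})^{-1}\1_{(-2,2)}(x)$.

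Applying \eqref{eq:HelfferSjostrand} to $f_\chi$, integrating against $\tilde\rho$, and swapping the order of integration then gives
\[
\Omega_f=\int f(x)\tilde\rho(x)\diff x = \frac{2}{\pi}\Re\int_{\R}\int_{\R_+}\partial_{\bar z}f_\C(x+i\eta)\,\widehat m(x+i\eta)\diff\eta\diff x,
\]
the integrand vanishing for $\eta>10$ by the cutoff $\chi$. Comparing with \eqref{eq:OmegaF}, one sees that $\widehat\Delta_N(z)$ is exactly the plug-in estimator of $\widehat m(z)$ obtained by replacing $m(z)$ and $m'(z)$ with $\frac{1}{N}\Tr\widehat G(z)$ and $\frac{1}{N}\Tr\widehat G(z)^2$.

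I next invoke the averaged local semicircle law for the Wigner minor $\widehat H$ from \cite{Erdos20121435}, which gives $\frac{1}{N}\Tr\widehat G(z)-m(z)\prec 1/(N\eta)$ uniformly for $\eta\ge\eta_0=N^{-2/3}$. Cauchy's integral formula on a disk of radius $\eta/2$ around $z$ upgrades this to the derivative-form estimate $\frac{1}{N}\Tr\widehat G(z)^2-m'(z)\prec 1/(N\eta^2)$. The algebraic identity
\[
\widehat\Delta_N-\widehat m=\frac{\bigl(1+m'\bigr)\bigl(\tfrac{1}{N}\Tr\widehat G-m\bigr)+\bigl(-z-m\bigr)\bigl(\tfrac{1}{N}\Tr\widehat G^2-m'\bigr)}{\bigl(-z-\tfrac{1}{N}\Tr\widehat G\bigr)(-z-m)},
\]
combined with $|{-z-m}|=1/|m|\gtrsim 1$ (using $|m|\le 1$ by the maximum principle) and $|1+m'(z)|\lesssim\eta^{-1/2}$ in the worst case near the spectral edges $\pm2$, yields the pointwise bound $|\widehat\Delta_N(z)-\widehat m(z)|\prec 1/(N\eta^{3/2})+1/(N\eta^2)$ throughout $[\eta_0,10]$.

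Finally I split $\widehat\Omega_f-\Omega_f$ into the missing sliver $\eta\in(0,\eta_0)$ that is present in the $\Omega_f$ representation but not in $\widehat\Omega_f$, and the main comparison on $[\eta_0,10]$. Using $|\partial_{\bar z}f_\C|=\landauO{\eta}$ from \eqref{fCestimate} and the edge bound $|\widehat m(z)|\lesssim\eta^{-1/2}$, the sliver contributes $\landauO{\int_0^{\eta_0}\sqrt\eta\,d\eta}=\landauO{\eta_0^{3/2}}=\landauO{N^{-1}}$. For the main comparison, the product $|\partial_{\bar z}f_\C|\cdot|\widehat\Delta_N-\widehat m|\prec 1/(N\sqrt\eta)+1/(N\eta)$ integrates over $\eta\in[\eta_0,10]$ and the compact $x$-support of $\partial_{\bar z}f_\C$ to $\prec (\log N)/N$, so the total error is $\stO{N^{-1+\epsilon}}\prec N^{-2/3}$. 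The principal technical obstacle is establishing the derivative-form local law $\frac{1}{N}\Tr\widehat G^2-m'\prec 1/(N\eta^2)$ uniformly down to the critical scale $\eta_0$, especially near the edges where $m'$ develops a square-root singularity; the $\landauO{\eta}$-vanishing of $\partial_{\bar z}f_\C$ is precisely what tames these singular integrands and keeps the integrated error comfortably below the claimed threshold.
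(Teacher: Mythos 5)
Your proof is correct but takes a genuinely different route from the paper's. The paper avoids ever estimating $\frac{1}{N}\Tr\widehat G(z)^2 - m'(z)$ directly: after cheaply replacing the denominator $-z-m_N$ by $-z-m$ (the $\stO{1/(N\eta)}$ error is absorbed by $\abs{\partial_{\bar z}f_\C}=\landauO{\eta}$), it exploits the identity $1+\frac{1}{N}\Tr\widehat G^2=\partial_\eta[\eta-im_N]$ to integrate by parts in $\eta$, so that only the averaged local law on $m_N-m$ is ever invoked, and then applies Stokes via Lemma~\ref{lemma:stokes} to the resulting deterministic remainder. You instead identify the deterministic target $\widehat m(z)\defeq m(z)(1+m'(z))=-1/\sqrt{z^2-4}$ as the arcsine Stieltjes transform, reconstruct $\Omega_f$ exactly by Helffer--Sj\"ostrand applied to $\widehat m$, and control $\widehat\Delta_N-\widehat m$ pointwise using a derivative-form local law $\frac{1}{N}\Tr\widehat G^2-m'\prec 1/(N\eta^2)$ obtained by Cauchy's integral formula on a disk of radius $\eta/2$. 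Both arguments land comfortably inside the claimed $\stO{N^{-2/3}}$ (indeed at $N^{-1+\epsilon}$). Your Cauchy-upgrade requires the averaged local law to hold \emph{uniformly} over the contour, which the paper's integration by parts sidesteps; that uniformity is standard (net plus Lipschitz continuity of $G$) and available in the cited references, so your shortcut is legitimate. One small point worth stating explicitly: in your algebraic identity for $\widehat\Delta_N-\widehat m$ you also need a lower bound on the \emph{random} denominator $\abs{-z-\frac{1}{N}\Tr\widehat G}$, not only on $\abs{-z-m}=1/\abs{m}\geq 1$; this follows from $m_N=m+\stO{1/(N\eta)}$ together with $1/(N\eta)\leq N^{-1/3}$ for $\eta\geq\eta_0$, but the step deserves a line in a polished write-up.
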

\begin{proposition}[Fluctuations]\label{prop:fluctuations}
Under the assumptions of Theorem \ref{thm:mainThm} we have
 that \[\E F_N^2= \frac{1}{N} V_f+\stO{N^{-7/6}}\quad\text{and}\quad \E \widetilde F_N^2= \frac{1}{N} \widetilde V_f+\stO{N^{-7/6}}.\tag*{\qedhere} 
\]\end{proposition}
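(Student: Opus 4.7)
The proof factorises into three stages: (i) \emph{linearise} $\Delta_N-\widehat\Delta_N$ in the small fluctuations of quadratic forms in $h$ and in $h_{11}$; (ii) \emph{compute the covariance} of those fluctuations, exploiting the independence of $h$ and $\widehat H$; and (iii) \emph{evaluate the resulting double Helffer--Sj\"ostrand integral} to collapse it to the explicit one-dimensional integrals \eqref{eq:Vf1}--\eqref{eq:Vf2}.

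\textbf{Step 1 (linearisation).} Introduce the centred quadratic forms
\[X(z)\defeq\braket{h,\widehat G(z)h}-\tfrac{1}{N}\Tr\widehat G(z),\quad Y(z)\defeq\braket{h,\widehat G(z)^2h}-\tfrac{1}{N}\Tr\widehat G(z)^2,\quad D(z)\defeq-z-\tfrac{1}{N}\Tr\widehat G(z).\]
Concentration of quadratic forms in independent entries, applied conditionally on $\widehat H$ and combined with the local semicircle law for $\widehat H$ down to scale $\eta\ge\eta_0=N^{-2/3}$, yields $X(z),Y(z)=\stO{(N\eta)^{-1/2}}$ and $h_{11}\prec N^{-1/2}$. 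Expanding the Schur representation \eqref{eq:mNtilde} to first order in these small quantities gives
\[\Delta_N(z)-\widehat\Delta_N(z)=\frac{Y(z)}{D(z)}+\frac{\bigl(1+N^{-1}\Tr\widehat G(z)^2\bigr)\bigl(X(z)-h_{11}\bigr)}{D(z)^2}+\stO{(N\eta)^{-1}}.\]
For $\widetilde\Delta_N$ one additionally Taylor-expands $\widehat G(z+h_{11})=\widehat G(z)+h_{11}\widehat G(z)^2+\mathcal O(h_{11}^2)$; the fluctuation structure in $X,Y$ is unchanged, but the deterministic coefficient of $h_{11}$ is modified, and an integration by parts in the Helffer--Sj\"ostrand integral converts $\int f'\rho$ into $\tfrac12\int xf''\rho$, matching \eqref{eq:mainEQ}.

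\textbf{Step 2 (covariance of quadratic forms).} Conditional on $\widehat H$, the independence of the first row together with the prescribed moments of $h_{1j}$ yields, for $\widehat H$-measurable matrices $A,B$, a formula of the form
\[\Cov\bigl(\braket{h,Ah},\,\overline{\braket{h,Bh}}\bigm|\widehat H\bigr)=\frac{\Tr AB^*}{N^2}+|\sigma_2|^2\frac{\Tr A\bar B}{N^2}+(\sigma_4-2-|\sigma_2|^2)\frac{\sum_i A_{ii}\overline{B_{ii}}}{N^2}.\]
Substituting $A,B\in\{\widehat G(z_i),\widehat G(z_i)^2\}$ and applying the entrywise local law $\widehat G_{ij}(z)=\delta_{ij}m_{sc}(z)+\stO{(N\eta)^{-1/2}}$ (with $m_{sc}$ the Stieltjes transform of $\rho$) together with the trace identity
\[\tfrac{1}{N}\Tr\widehat G(z_1)\widehat G(z_2)=\tfrac{m_{sc}(z_1)-m_{sc}(z_2)}{z_1-z_2}+\stO{(N\sqrt{\eta_1\eta_2})^{-1}}\]
reduces every conditional covariance to an explicit rational function of $m_{sc}(z_1),m_{sc}(z_2)$ and their $z$-derivatives. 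The three structural pieces above are precisely what produce, after Step~3, the three terms $V_{f,1}$, $|\sigma_2|^2V_{\sigma_2}$, and $(\sigma_4-1)V_{f,2}$ of $V_f$.

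\textbf{Step 3 (contour collapse).} Squaring \eqref{eq:FN} and taking expectation gives
\[\E F_N^2=\frac{2}{\pi^2}\Re\iiiint\partial_{\bar z_1}f_\C(z_1)\partial_{\bar z_2}f_\C(z_2)\,\E\bigl[(\Delta_N-\widehat\Delta_N)(z_1)\overline{(\Delta_N-\widehat\Delta_N)(z_2)}\bigr]\diff\eta_1\diff x_1\diff\eta_2\diff x_2\]
plus the analogous piece without complex conjugate. Inserting Steps~1--2 produces a finite list of integrals of the form $\iiiint\partial_{\bar z_1}f_\C\partial_{\bar z_2}f_\C\,R(m_{sc}(z_1),m_{sc}(z_2))$ with $R$ rational. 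Each is evaluated by expanding $R$ in partial fractions in $z_1,z_2$ separately, applying the inverse Helffer--Sj\"ostrand identity in each variable to collapse to boundary contours on $[-2,2]$, and finally using the parametrisation $z=2\cos\theta,\ m_{sc}(z)=-e^{i\theta}$ to produce the $\rho$-weighted single integrals appearing in \eqref{eq:Vf1}--\eqref{eq:Vf2}. The computation for $\widetilde F_N$ is parallel, modulo the modified $h_{11}$-coefficient identified in Step~1.

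\textbf{Main difficulty.} The delicate point is to show that each of the error terms — the quadratic linearisation remainder $\stO{(N\eta)^{-1}}$, the $\stO{(N\eta)^{-1/2}}$ fluctuation in the entrywise local law, and the trace-identity remainder — contributes only $\stO{N^{-7/6}}$ to $\E F_N^2$ after integration against $\partial_{\bar z_1}f_\C\partial_{\bar z_2}f_\C\diff\eta_1\diff x_1\diff\eta_2\diff x_2$. The bound $\partial_{\bar z}f_\C=\mathcal O(\eta)$ from \eqref{fCestimate} absorbs the leading $\eta^{-1}$-singularities, but the residual logarithmic integration down to the edge scale $\eta_0=N^{-2/3}$ must be balanced quantitatively. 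This relies on the optimal local law for $\widehat H$ up to the spectral edge together with fluctuation-averaging-type bounds that exploit the clean separation of randomness between $h$ and $\widehat H$ afforded by the conditioning used in Step~2.
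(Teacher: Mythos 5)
Your three-step decomposition (linearise the Schur-complement difference in the quadratic-form fluctuations; compute the conditional covariances using the $(1,\sigma_2,\sigma_4)$ cumulant structure; collapse the double Helffer--Sj\"ostrand integral to boundary contours) is structurally the same as the paper's, which first rewrites $\Delta_N-\widehat\Delta_N$ as a $\partial_z$-derivative to absorb your $Y(z)$ term by an integration by parts in $\eta$, then reduces everything to $X(z)$ alone. Your Step-1 expansion and your Step-2 covariance formula (including the coefficient $\sigma_4-2-|\sigma_2|^2$) are correct, modulo a minor misquoted remainder: the second-order term in the expansion is $\stO{(N\eta^2)^{-1}}$, not $\stO{(N\eta)^{-1}}$, because $Y(z)=\stO{(N\eta^3)^{-1/2}}$.

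There are, however, two genuine gaps in Step 2. First, the trace identity you invoke, $\frac1N\Tr\widehat G(z_1)\widehat G(z_2)=\frac{m(z_1)-m(z_2)}{z_1-z_2}+\stO{(N\sqrt{\eta_1\eta_2})^{-1}}$, is mis-stated: the exact spectral identity replaces $m$ by $m_N$, and the substitution $m_N\to m$ introduces an error of size $\frac{\stO{(N\eta_1)^{-1}}+\stO{(N\eta_2)^{-1}}}{|z_1-z_2|}$, which blows up as $z_1\to z_2$ and is not summable against $\partial_{\bar z_1}f_\C\,\partial_{\bar z_2}f_\C\,\diff x_1\diff x_2\diff\eta_1\diff\eta_2$ without further work. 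This is precisely why the paper does not use the trace identity but instead proves Lemma~\ref{lemma:trGG} by fluctuation averaging (following \cite{2013AnHP...14.1837E}), obtaining an error with the crucial $(\eta+\eta')^{-1}$ prefactor, and then invokes Lemma~\ref{lemma:eq:errorBoundLogEta} to show $\iint\frac{\diff x\diff x'}{|1-m(z_0)m(z_0')|}\lesssim|\log\eta_0|$. Your proposal never establishes the analogue of this integrability estimate.

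Second, and more seriously, the $|\sigma_2|^2$ contribution does \emph{not} reduce to a ``rational function of $m_{sc}(z_1),m_{sc}(z_2)$.'' It is governed by $\frac1N\sum_{i\ne j}G_{ij}G'_{ij}$ (a ``transpose'' trace, i.e.\ $\Tr A\bar B$ rather than $\Tr AB^*$), for which there is no spectral/partial-fraction identity. The paper derives for it a \emph{vector} self-consistent equation involving the skew matrix $S$, whose solution is a genuinely transcendental function of $m(z)m(z')$ (see the $\tan$/$\tanh$ formulas in \eqref{eq:trGGt} and \eqref{eq:Fsigma2}, proved via Bernoulli-number identities for $\braket{e,S^{2k}e}$). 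Your claim that ``every conditional covariance'' reduces to rational data is therefore false when $\sigma_2\ne0$; as written, Step~2 only produces the correct $V_{f,1}$ and $(\sigma_4-1)V_{f,2}$ pieces, and the $V_{\sigma_2}$ piece cannot be obtained by the method you describe. (For real symmetric $H$, $\sigma_2=1$ and $G$ is symmetric so both pairings coincide and the issue disappears; but the Proposition is stated for complex Hermitian matrices with general $\sigma_2$.)
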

 Note that the error terms in these propositions are deterministic and hence could also be written as $\landauO{N^{-2/3+\epsilon}}$ or $\landauO{N^{-7/6+\epsilon}}$ 
for any $\epsilon>0$, respectively, but for simplicity we keep the $\stO{\ldots}$ notation for deterministic quantities as well. 
 
The positivity of $V_f$ and $\widetilde V_{f}$ defined \eqref{eq:Vf1}--\eqref{eq:Vf2} follows from $1=\sigma_2\le \sigma_4$
and  from simple Schwarz inequalities
\begin{align*}
   \left( \int_{-2}^2 x\rho(x) f'(x) \diff x\right)^2 &\le  \int_{-2}^2 \rho(x) f'(x)^2 \diff x,\\
   \left( \int_{-2}^2  x\rho(x) \left(f'(x) -\int \rho f'\right)\diff x\right)^2 &\le \left( \int_{-2}^2 \rho(x)\left(f(x) -\int \rho f'\right)^2\rho(x)\diff x\right),
 \end{align*}
using that the semicircle density $\rho$ is symmetric and $\int x^2 \rho(x) \diff x =1$.

\subsection{Leading Order Integral}
This section is devoted to the proof of Proposition \ref{prop:leading}. We rely on the local semicircle law  in the averaged form
(see \cite{Erdos20121435} or \cite[Theorem 2.3]{2012arXiv1212.0164E})
 \begin{align}\label{eq:localSC}
m_N(z)\defeq \frac{1}{N}\Tr\widehat G(z) = m(z)+\stO{\frac{1}{N\eta}}
\end{align} and the entry-wise form 
\begin{align}\label{eq:localSCentry}
\widehat G_{ij}(z)-\delta_{ij}m(z)\prec\frac{1}{\sqrt{N\eta}}
\end{align} 
 which holds true for all $\eta=\Im z>\eta_0$. Here $m(z)$ is the Stieltjes transform of the semicircle distribution, i.e.,
\[
m(z)\defeq \int_{-2}^2\frac{1}{x-z}\rho(x)\diff x=\frac{1}{2\pi}\int_{-2}^2\frac{\sqrt{4-x^2}}{x-z}\diff x=\frac{-z+\sqrt{z^2-4}}{2},
\] 
where we chose the branch of the square root with positive imaginary part. Note that  $\widehat G$ is an $(N-1)\times (N-1)$
matrix but we still normalize its trace by $1/N$ to define $m_N$; this unconventional notation will simplify some formulas later. Strictly speaking,
the sum of the variances in each row of $\widehat H$ is not exactly one as required in  
\cite{Erdos20121435, 2012arXiv1212.0164E}, partly due to the removal of one column and partly
due to the relaxed bound $\E |h_{ii}|^2\le C/N$ on the diagonal elements. Nevertheless, we still have
$\sum_{i=2}^N \E |h_{ij}|^2 = 1+ \landauO{N^{-1}}$ for each $j=2,3,\ldots, N$ and 
the proof of \cite[Theorem 2.3]{2012arXiv1212.0164E} goes through. The only small change is that the $\landauO{N^{-1}}$
error term above gives rise to an additional  term  of size $\landauO{1/N\eta}$ in the definition of $\Upsilon_i$ in (5.7)-(5.8) of \cite{2012arXiv1212.0164E}
using the trivial bound $|v_i|= |G_{ii}-m|\le 2/\eta$ with the notation of that paper. Since the error bound on $\Upsilon_i$ used
in that proof is bigger than $\landauO{1/N\eta}$, see \cite[Lemma 5.2]{2012arXiv1212.0164E}, the rest of the proof is unchanged. 

Thus 
\[
\widehat\Delta_N (z)=\frac{1+\frac{1}{N}\Tr\widehat G(z)^2}{-z-m(z)}+\stO{\frac{1}{N\eta}}=m(z)\left[1+\frac{1}{N}\Tr\widehat G(z)^2\right]+\stO{\frac{1}{N\eta}},
\] 
where we used the relation $m(z)=1/(-z-m(z))$. Since $\partial_{\bar z}f_\C(x+i\eta)=\landauO{\eta}$ 
for small $\eta$ the error term, when inserted in \eqref{eq:OmegaF} only gives a contribution of $1/N$. Thus eq.~\eqref{eq:OmegaF} becomes \begin{align*}
\widehat\Omega_f=\frac{2}{\pi}\Re\int_{\R}\int_{\eta_0}^{10} \partial_{\bar z}f_\C(z)m(z) \left[1+\frac{1}{N}\Tr \widehat G(z)^2\right]\diff \eta\diff x +\stO{N^{-1}},
\end{align*} where from now on we shall always use the shorthand notation $z=x+i\eta$. Noting that 
\[
1+\frac{1}{N}\Tr\widehat G(x+i\eta)^2=\partial_{\eta}\left[\eta-i\frac{1}{N}\Tr\widehat G(x+i\eta)\right]=\partial_{\eta}\left[\eta-im_N(x+i\eta)\right],
\] 
 and $-i\partial_\eta h(z)=\partial_z h(z)$ for analytic $h$,  we can now perform an integration by parts to find \begin{align*}
\widehat\Omega_f &= \frac{2}{\pi}\Re\int_{\R} \partial_{\bar z}f_\C(z_0)m(z_0) \left[\eta-i m_N(z_0)\right]\diff x -\frac{2}{\pi}\Re\int_{\R}\int_{\eta_0}^{10} \partial_\eta\left(\partial_{\bar z}f_\C(z)m(z)\right) \left[\eta-im_N(z)\right]\diff \eta\diff x +\stO{N^{-1}}\\ & =\frac{2}{\pi}\Re\int_{\R} \partial_{\bar z}f_\C(z_0)m(z_0) \left[\eta-im(z_0)\right]\diff x -\frac{2}{\pi}\Re\int_{\R}\int_{\eta_0}^{10} \partial_\eta\left(\partial_{\bar z}f_\C(z)m(z)\right) \left[\eta-im(z)\right]\diff \eta\diff x +\stO{N^{-1}}\\&\quad + \frac{2}{\pi}\Re\int_{\R} \partial_{\bar z}f_\C(z_0)m(z_0) i\left[m_N(z_0)-m(z_0)\right]\diff x -\frac{2}{\pi}\Re\int_{\R}\int_{\eta_0}^{10} \partial_\eta\left(\partial_{\bar z}f_\C(z)m(z)\right) i\left[m_N(z)-m(z)\right]\diff \eta\diff x \\ & = \frac{2}{\pi}\Re\int_{\R}\int_{\eta_0}^{10} \partial_{\bar z}f_\C(z)m(z) \left[1+m'(z)\right]\diff \eta\diff x + \stO{N^{-1}}+\stO{N^{-1}|\log\eta_0|} 
\end{align*} 
where we used that $\partial_{\bar z}f_\C(x+i\eta)$ scales like $\eta$ near the real axis and the local semicircle law from eq.~\eqref{eq:localSC}. For the main term we need the following simple lemma. 
\begin{lemma}\label{lemma:stokes}
 Let $\phi,\psi\colon[-10,10]\times[0,10i]\to\C$ be functions such that $\partial_{\bar z}\psi(z)\equiv0$, $\phi, \psi \in H^1$
 and $\phi$ vanishes at the left, right and top boundary of the integration region. Then  for any $\eta_0\in [0,10]$, we have 
  \[\int_{-10}^{10}\int_{\eta_0}^{10} [\partial_{\bar z}\phi(z)]\psi(z)\diff\eta\diff x=\frac{1}{2i}\int_{-10}^{10} \phi(x+i\eta_0)\psi(x+i\eta_0)\diff x , \qquad  z=x+i\eta.\tag*{\qedhere} 
 \]
\end{lemma}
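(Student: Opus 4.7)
The plan is to recognize this statement as essentially a Stokes/Cauchy--Pompeiu identity on the rectangle $R \defeq [-10,10]\times[\eta_0,10]$, combined with the vanishing of $\phi$ on three of the four sides. First I would observe that since $\partial_{\bar z}\psi\equiv 0$, the product obeys the Leibniz rule in the degenerate form
\[
\partial_{\bar z}(\phi\psi)=(\partial_{\bar z}\phi)\psi,
\]
so it suffices to compute $\int_R \partial_{\bar z}(\phi\psi)\diff x\diff\eta$ and convert it into a boundary integral.

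Next I would invoke the standard identity $d(g\,dz)=2i\,\partial_{\bar z}g\,dx\wedge d\eta$, valid on smooth functions and extending to $H^1$ by a routine mollification argument (since all factors are integrable and the boundary traces of $\phi\psi$ on $\partial R$ exist in the trace sense). Stokes' theorem applied to $g=\phi\psi$ on $R$, with $\partial R$ oriented counterclockwise, then yields
\[
2i\int_R (\partial_{\bar z}\phi)\psi\,dx\,d\eta = \oint_{\partial R}\phi\psi\,dz.
\]
The boundary of $R$ consists of four segments: the bottom $\{x+i\eta_0:x\in[-10,10]\}$ traversed left to right, the right side $\{10+i\eta:\eta\in[\eta_0,10]\}$ upward, the top $\{x+10i:x\in[-10,10]\}$ right to left, and the left side $\{-10+i\eta\}$ downward. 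By hypothesis $\phi$ vanishes on the right, top, and left segments, so their contributions are zero, and on the bottom $dz=dx$. Hence the boundary integral reduces to $\int_{-10}^{10}\phi(x+i\eta_0)\psi(x+i\eta_0)\,dx$, and dividing by $2i$ gives the claimed identity.

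The only mild obstacle is the $H^1$ regularity: strictly speaking one needs Stokes for $\phi\psi\in W^{1,1}(R)$, which I would handle by approximating $\phi$ (and $\psi$, if needed) by smooth functions that also vanish on the three prescribed sides, applying the smooth Stokes identity, and passing to the limit using continuity of the trace operator $H^1(R)\to L^2(\partial R)$. Since the conclusion is a bilinear identity jointly continuous in $\phi,\psi\in H^1$, the limit passes through without difficulty.
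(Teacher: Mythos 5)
Your proof is correct and follows essentially the same route as the paper: both apply Stokes' theorem to the 1-form $\phi\psi\,\diff z$, use $\partial_{\bar z}\psi\equiv 0$ to reduce $\partial_{\bar z}(\phi\psi)$ to $(\partial_{\bar z}\phi)\psi$, and observe that the vanishing of $\phi$ on the left, right, and top edges leaves only the bottom boundary contribution. Your explicit treatment of the $H^1$ case by mollification and trace continuity is a reasonable extra detail that the paper leaves implicit.
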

\begin{proof}
This follows from the computation 
\begin{align*}
\int_{-10}^{10}\int_{\eta_0}^{10} [\partial_{\bar z}\phi(z)]\psi(z)\diff\eta\diff x &= \frac{1}{2i}\int_{-10}^{10}\int_{\eta_0}^{10} [\partial_{\bar z}\phi(z)]\psi(z)\diff \bar z\wedge\diff z = \frac{1}{2i}\int_{-10}^{10}\int_{\eta_0}^{10} \diff \left(\phi(z)\psi(z)\diff z\right)\\ & = \frac{1}{2i}\int_{-10}^{10} \phi(x+i\eta_0)\psi(x+i\eta_0)\diff x,
\end{align*} where we used Stokes' Theorem in the ultimate step.
\end{proof}  We apply this together with $\Im m(x)[1+m'(x)]=(4-x^2)^{-1/2}$  and \eqref{fCestimate} to extend the integration to the real axis  and conclude that \begin{align*}\widehat\Omega_f=
\frac{2}{\pi}\Re\int_{\R}\int_{\eta_0}^{10} \partial_{\bar z}f_\C(z)\widehat\Delta_N (z)\diff \eta\diff x =\Omega_f+\stO{N^{-2/3}}= \frac{1}{\pi}\int_{-2}^{2} \frac{f(x)}{\sqrt{4-x^2}} \diff x+\stO{N^{-2/3}},
\end{align*} completing the proof of Proposition \ref{prop:leading}.

\subsection{Fluctuation Integral}
We now turn to the proof of Proposition \ref{prop:fluctuations}. We formulate the main estimate as a lemma:
\begin{lemma}
For any $\eta>\eta_0$ we have that \begin{align}\label{eq:TildemN1}
\Delta_N(z)-\widehat\Delta_N (z)= \partial_{z}\frac{\braket{h,\widehat G(z) h}-m_N(z)-h_{11}}{-z-m_N(z)}+\stO{\frac{1}{N\eta^2}}
\end{align}
and 
\begin{align}\label{eq:TildemN2}
\widetilde\Delta_N(z)-\widehat\Delta_N (z)= \partial_{z}\frac{\braket{h,\widehat G(z+h_{11}) h}-m_N(z)}{-z-m_N(z)}+\stO{\frac{1}{N\eta^2}}.
\end{align}
\end{lemma}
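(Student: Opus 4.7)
The starting point is the identity $\partial_z \widehat G(z) = \widehat G(z)^2$, which reveals each of $\Delta_N$, $\widetilde\Delta_N$, and $\widehat\Delta_N$ as a logarithmic derivative. Setting
\[
A := h_{11} - z - \braket{h, \widehat G(z) h}, \qquad \widetilde A := -z - \braket{h, \widehat G(z + h_{11}) h}, \qquad B := -z - m_N(z),
\]
one verifies $\partial_z A = -1 - \braket{h, \widehat G(z)^2 h}$, $\partial_z \widetilde A = -1 - \braket{h, \widehat G(z+h_{11})^2 h}$, and $\partial_z B = -1 - \tfrac{1}{N}\Tr \widehat G(z)^2$. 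Combined with \eqref{eq:mNtilde} and \eqref{eq:hatDelta} this gives the compact formulas $\Delta_N = -\partial_z \log A$, $\widetilde\Delta_N = -\partial_z \log \widetilde A$, $\widehat\Delta_N = -\partial_z \log B$, so
\[
\Delta_N - \widehat\Delta_N = -\partial_z \log(1 + u), \qquad \widetilde\Delta_N - \widehat\Delta_N = -\partial_z \log(1 + \widetilde u),
\]
with $u := (A-B)/B = (h_{11} - P)/B$ and $\widetilde u := (\widetilde A - B)/B = -\widetilde P/B$, where $P := \braket{h, \widehat G(z) h} - m_N(z)$ and $\widetilde P := \braket{h, \widehat G(z + h_{11}) h} - m_N(z)$. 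Expanding $\log(1+u) = u - u^2/2 + \ldots$, the linear contribution $-\partial_z u = \partial_z\bigl((P - h_{11})/B\bigr)$, respectively $-\partial_z \widetilde u = \partial_z(\widetilde P/B)$, is precisely the claimed right-hand side, so the lemma reduces to showing that the quadratic-and-higher remainder $\tfrac{1}{2}\partial_z(u^2) + \ldots$ is $\stO{1/(N\eta^2)}$.

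To control this remainder I will invoke, uniformly over the support of $\partial_{\bar z} f_\C$ for $\eta \geq \eta_0 = N^{-2/3}$: (i) the local laws \eqref{eq:localSC}--\eqref{eq:localSCentry}, ensuring $|B| = |1/m(z) + \stO{1/(N\eta)}|$ is bounded below by a positive constant; (ii) the Hanson--Wright large deviation bound for quadratic forms in the random vector $h$, yielding
\[
P, \ \widetilde P_1 := \braket{h, \widehat G(z+h_{11}) h} - m_N(z+h_{11}) \prec \tfrac{1}{\sqrt{N\eta}}, \qquad Q := \braket{h, \widehat G(z)^2 h} - \tfrac{1}{N}\Tr \widehat G(z)^2 \prec \tfrac{1}{\sqrt{N\eta^3}},
\]
with the scaling of $Q$ coming from $\Tr (\widehat G\widehat G^*)^2 \leq \eta^{-2}\Tr \widehat G\widehat G^* = N\Im m_N/\eta^3$; (iii) the moment bound $h_{11} \prec 1/\sqrt{N}$ together with $|\partial_z m| \prec 1/\sqrt{\eta}$ and the contour-differentiated local law $\partial_z m_N = \partial_z m + \stO{1/(N\eta^2)}$, which combine to give $m_N(z+h_{11}) - m_N(z) \prec 1/\sqrt{N\eta}$ and hence $\widetilde P \prec 1/\sqrt{N\eta}$. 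These estimates force $|u|, |\widetilde u| \prec 1/\sqrt{N\eta}$, which for $\eta \geq \eta_0$ keeps $|A|, |\widetilde A|$ bounded below with overwhelming probability.

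A direct algebraic rearrangement then gives, for instance,
\[
\Delta_N - \widehat\Delta_N - \partial_z\tfrac{P - h_{11}}{B} = \tfrac{(P - h_{11})Q}{AB} + \tfrac{(1 + \frac{1}{N}\Tr \widehat G^2)(P - h_{11})^2}{AB^2},
\]
and using $|1 + \tfrac{1}{N}\Tr \widehat G^2| \prec 1 + 1/\sqrt{\eta}$ the two summands are $\stO{1/(N\eta^2)}$ and $\stO{1/(N\eta^{3/2})}$ respectively, both within the target. An analogous algebraic identity handles $\widetilde\Delta_N - \widehat\Delta_N$ after breaking $\widetilde P = \widetilde P_1 + (m_N(z+h_{11}) - m_N(z))$. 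The main technical obstacle is obtaining the sharp $1/\sqrt{N\eta^3}$ large-deviation scaling of $Q$ (rather than the naive $\eta^{-2}/\sqrt{N\eta}$), and extending the local semicircle law to its $z$-derivative via a small Cauchy contour, both uniformly down to the spectral edges where $\partial_z m$ and $1 + \partial_z m$ degenerate like $1/\sqrt{\eta}$; the shifted case additionally requires interleaving the $h_{11}$ shift with these derivative bounds in a way that does not spoil the final error.
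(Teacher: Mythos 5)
Your proof is correct and uses essentially the same ingredients as the paper: the quadratic-form large deviation bounds giving $P\prec (N\eta)^{-1/2}$, $Q\prec(N\eta^3)^{-1/2}$, the lower bound on $\abs{-z-m_N(z)}$ from the local law, and an algebraic rearrangement isolating the error after subtracting the target $\partial_z\bigl[(P-h_{11})/B\bigr]$; the logarithmic-derivative packaging $\Delta_N=-\partial_z\log A$, $\widehat\Delta_N=-\partial_z\log B$ is a cleaner way to see the same cancellation that the paper obtains by cross-multiplying the fractions and then replacing the denominator $AB$ by $B^2$. You also do slightly more than the paper for \eqref{eq:TildemN2}, which the paper dismisses as "identical": your decomposition $\widetilde P = \widetilde P_1 + (m_N(z+h_{11})-m_N(z))$ with the derivative local law $m_N'=m'+\stO{1/(N\eta^2)}$ and $\abs{m'}\lesssim \eta^{-1/2}$ is exactly what is needed to keep $\widetilde P\prec (N\eta)^{-1/2}$ uniformly down to $\eta_0=N^{-2/3}$, and is a worthwhile detail to spell out.
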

\begin{proof}
This lemma  relies on the following large deviation bound (see, e.g. \cite[Theorem C.1]{2012arXiv1212.0164E})  
\begin{align}\braket{h,Ah}
=\frac{1}{N}\Tr A+\stO{\frac{1}{N}\sqrt{\Tr\abs{A}^2}}\label{eq:LargeDeviation}.
\end{align}
To prove eq.~\eqref{eq:TildemN1} we  write the difference $\Delta_N-\widehat\Delta_N$ from   \eqref{eq:mNtilde} and \eqref{eq:hatDelta} as 
\begin{align*}
\Delta_N(z)-\widehat\Delta_N (z)=\frac{\left(-z-m_N(z) \right)\left(\braket{h,\widehat G(z)^2h}-m_N'(z)\right)-\left(-1-m_N'(z \right)\left(\braket{h,\widehat G(z)h}-m_N(z)-h_{11}\right)}{\left(-z-m_N(z) \right)^2- \left(-z-m_N(z) \right) \left(\braket{h,\widehat G(z)h}-m_N(z)-h_{11}\right)}. 
\end{align*} Now it follows from eq.~\eqref{eq:LargeDeviation} and \eqref{eq:localSC} that 
\begin{align}
\braket{h,\widehat G(z)h}-m_N(z)\prec \frac{1}{N} \sqrt{\Tr\abs{\widehat G(z)}^2}
\leq \frac{1}{N}\sqrt{\frac{1}{\eta}\Im\Tr \widehat G(z)}\prec\frac{1}{\sqrt{N\eta}}
\label{eq:LargeDeviationG}
\end{align} 
and also 
\begin{align}
\braket{h,\widehat G(z)^2h}-m_N'(z)\prec \frac{1}{N}\sqrt{\Tr\abs{\widehat G(z)}^4}\leq
 \frac{1}{N\eta}\sqrt{\Tr\abs{\widehat G(z)}^2}\prec\frac{1}{\sqrt{N\eta^3}}.
 \label{eq:LargeDeviationG2}
 \end{align}
We can therefore conclude that $\Delta_N(z)-\widehat\Delta_N (z)$ can be estimated as
\begin{align*}
&\frac{\left(-z-m_N(z) \right)\left(\braket{h,\widehat G(z)^2h}-m_N'(z)\right)-\left(-1-m_N'(z \right)\left(\braket{h,\widehat G(z)h}-m_N(z)-h_{11}\right)}{\left(-z-m_N(z) \right)^2}+\stO{\frac{1}{N\eta^2}}\\ &\qquad= \partial_{z}\frac{\braket{h,\widehat G(z) h}-m_N(z)-h_{11}}{-z-m_N(z)}+\stO{\frac{1}{N\eta^2}}. 
\end{align*} 
The proof of eq.~\eqref{eq:TildemN2} is identical and shall be omitted. 
\end{proof}
We now use eq.~\eqref{eq:TildemN1} to start estimating the fluctuations $F_N$ of $f_N$ as defined in eq.~\eqref{eq:FN} via an integration by parts (with $z_0= x+i\eta_0$)
 \begin{align*}
F_N=& -\frac{2}{\pi}\Re\int_{\R}\partial_{\bar z}f_\C(z_0)i\frac{\braket{h,\widehat G(z_0) h}-m_N(z_0)-h_{11}}{-z_0-m_N(z_0)}\diff x \\&\qquad+ \frac{2}{\pi}\Re\int_{\R}\int_{\eta_0}^{10}\partial_{\eta}\partial_{\bar z}f_\C( z)i\frac{\braket{h,\widehat G( z) h}-m_N( z)-h_{11}}{- z-m_N( z)}\diff \eta\diff x+\stO{\frac{-\log\eta_0}{N}} 
\end{align*} and continue with the estimate 
\[\frac{\braket{h,\widehat G( z) h}-m_N( z)-h_{11}}{- z-m_N( z)}=\frac{\braket{h,\widehat G( z) h}-m_N( z)-h_{11}}{- z-m(z)}+\stO{\frac{1}{(N\eta)^{3/2}}} \] 
from \eqref{eq:LargeDeviationG} and  \eqref{eq:localSC} 
to find that \begin{align}\nonumber
F_N=& -\frac{2}{\pi}\Re\int_{\R}m(z_0)\partial_{\bar z}f_\C(z_0)i\left[\braket{h,\widehat G(z_0) h}-m_N(z_0)-h_{11}\right]\diff x \\\nonumber&\qquad+ \frac{2}{\pi}\Re\int_{\R}\int_{\eta_0}^{10}m(z)\partial_{\eta}\partial_{\bar z}f_\C( z)i\left[\braket{h,\widehat G( z) h}-m_N( z)-h_{11}\right]\diff \eta\diff x+\stO{N^{-2/3}}\\ =& -\frac{2}{\pi}\Im\int_{\R}\int_{\eta_0}^{10}m(z)\partial_{\eta}\partial_{\bar z}f_\C( z)\left[\braket{h,\widehat G( z) h}-m_N( z)-h_{11}\right]\diff \eta\diff x+\stO{N^{-2/3}}, \label{eq:Fexpansion}
\end{align} where we used in the last step that \[\abs{\partial_{\bar z}f_\C(z_0)\left[\braket{h,\widehat G(z_0) h}-m_N(z_0)-h_{11}\right]}\prec \sqrt{\frac{\eta_0}{N}}\leq N^{-2/3}
\]
 from \eqref{eq:LargeDeviationG} and \eqref{fCestimate}. Similarly one finds that \begin{align}\nonumber\widetilde F_N\defeq& \frac{2}{\pi}\Re\int_{\R}\int_{\eta_0}^{10} \partial_{\bar z}f_\C(z)[\widetilde\Delta_N(z)-\widehat\Delta_N (z)]\diff \eta\diff x\\\nonumber=&\frac{2}{\pi}\Re\int_{\R}\int_{\eta_0}^{10}m(z)\partial_{\eta}\partial_{\bar z}f_\C( z)i\left[\braket{h,\widehat G( z+h_{11}) h}-m_N(z)\right]\diff \eta\diff x+\stO{N^{-2/3}}\\\nonumber=&\frac{2}{\pi}\Re\int_{\R}\int_{\eta_0}^{10}m(z)\partial_{\eta}\partial_{\bar z}f_\C( z)i\left[\braket{h,\widehat G( z+h_{11}) h}-m_N(z+h_{11})+m(z+h_{11})-m(z)\right]\diff \eta\diff x+\stO{N^{-2/3}}\\=& -\frac{2}{\pi}\Im\int_{\R}\int_{\eta_0}^{10}m(z)\partial_{\eta}\partial_{\bar z}f_\C( z)\left[\braket{h,\widehat G( z+h_{11}) h}-m_N(z+h_{11})+h_{11}m'(z)\right]\diff \eta\diff x+\stO{N^{-2/3}}\label{eq:Ftildeexpansion}
\end{align} where in the penultimate step we used the local semicircle law \eqref{eq:localSC} and integrated the error term $(N\eta)^{-1}$ at an expense of $N^{-1}|\log\eta_0|$ and in the last step estimated \[m(z+h_{11})-m(z)=h_{11}m'(z)+\stO{\frac{1}{\eta^{3/2}N}},\]  where the error term, after integration,  contributes an error of at most $N^{-2/3}$. 

Both fluctuation estimates from eqs. \eqref{eq:Fexpansion} and \eqref{eq:Ftildeexpansion} have two convenient properties: Firstly,  the leading order expressions for $F_N$ and $\widetilde F_N$ have zero mean and secondly, the fluctuations in them stemming from $h_{11}$ and the ones from $h$ and $\widehat G(z)$ can be separated.  Indeed, \[\E\left[\braket{h,\widehat G( z+h_{11}) h}-m_N(z+h_{11})+h_{11}m'(z)\right]^2=\E\left[\braket{h,\widehat G( z+h_{11}) h}-m_N(z+h_{11})\right]^2+\E\left[h_{11}m'(z)\right]^2\]
since the expectation with respect to $h$, conditioned on $h_{11}$ of the first term on the rhs.~is zero and $h$ and $h_{11}$ are independent. Similarly, \[\E\left[\braket{h,\widehat G( z) h}-m_N(z)-h_{11}\right]^2=\E\left[\braket{h,\widehat G( z) h}-m_N(z)\right]^2+\E\left[h_{11}\right]^2.\]
Therefore we can start computing the variances as \begin{align}\nonumber
\E F_N^2 &= \E\left(\frac{2}{\pi}\Im\int_{\R}\int_{\eta_0}^{10}m(z)\partial_{\eta}\partial_{\bar z}f_\C( z)\left[\braket{h,\widehat G( z) h}-m_N( z)\right]\diff \eta\diff x\right)^2 \\&\qquad\qquad+\frac{s_{11}}{N}\left(\frac{2}{\pi}\Im\int_{\R}\int_{0}^{10}m(z)\partial_{\eta}\partial_{\bar z}f_\C( z)\diff \eta\diff x\right)^2+\stO{N^{-7/6}}\label{eq:EF2}
\end{align} 
and
\begin{align}\nonumber
\E \widetilde F_N^2 &=\E\left(\frac{2}{\pi}\Im\int_{\R}\int_{\eta_0}^{10}m(z)\partial_{\eta}\partial_{\bar z}f_\C( z)\left[\braket{h,\widehat G( z+h_{11}) h}-m_N( z+h_{11})\right]\diff \eta\diff x\right)^2 \\&\qquad\qquad+\frac{s_{11}}{N}\left(\frac{2}{\pi}\Im\int_{\R}\int_{0}^{10}m(z)m'(z)\partial_{\eta}\partial_{\bar z}f_\C( z)\diff \eta\diff x\right)^2+\stO{N^{-7/6}}.\label{eq:EtildeF2}
\end{align} 
Note that in the second terms we extended the integration domain of $\eta$ starting from 0 instead of $\eta_0$
at a negligible error. The second terms are already deterministic and explicitly computable using Lemma \ref{lemma:stokes} and they give rise to the integral coefficients in \eqref{eq:mainEQ}.  When taking expectations, we frequently use the property that
if $X=\stO{Y}$, $Y\ge 0$ and
$\abs{X}\le N^C$ for some constant $C$, then $\E \abs{X}\prec \E Y$, or, equivalently, $\E \abs{X}\le N^\epsilon \E Y$ for any $\epsilon>0$ and $N\ge N_0(\epsilon)$. 

 For the first term we introduce short-hand notations \begin{align}g(z)\defeq \frac{2}{\pi} m(z)\partial_{\eta}\partial_{\bar z} f_\C(z), \qquad X(z)\defeq \sqrt{N}\left[\braket{h,\widehat G( z) h}-m_N(z)\right]\label{def:gX}\end{align}
to write \[F_N'\defeq \frac{1}{\sqrt N}\E\left(\Im\int_{\R}\int_{\eta_0}^{10}g(z)X(z)\diff \eta\diff x\right)^2,\qquad \widetilde F_N'\defeq \frac{1}{\sqrt{N}}\E\left(\Im\int_{\R}\int_{\eta_0}^{10}g(z)X(z+h_{11})\diff \eta\diff x\right)^2.\]
For complex numbers $z,w$ we can expand 
\begin{align}(\Im z)(\Im w)
=\frac{1}{2}\Re\left[ \bar z w-z w\right]
\label{eq:Im2}
\end{align}
 to write out \begin{align}F_N'&=\frac{1}{N}\frac{1}{2}\Re\iint_\R\iint_{\eta_0}^{10} \left[ g(z)g(\bar z')\E X(z)X(\bar z')-g(z)g(z')\E X(z)X(z')\right]\diff\eta\diff\eta'\diff x\diff x' \label{eq:varIntegral}\end{align} where we used that $\overline{X(z)}=X(\bar z)$ and $\overline{g(z)}=g(\bar z)$. 
To work out the expectations, we expand \begin{align*}
X(z)X(z') &=  N\left(\sum_{i\not=j}\overline{h_i}G_{ij}h_j+\sum_{i}\left[\abs{h_i}^2-\frac{1}{N}\right]G_{ii}\right)\left(\sum_{l\not=k}\overline{h_l}G'_{lk}h_k+\sum_{l}\left[\abs{h_l}^2-\frac{1}{N}\right]G'_{ll}\right)
\end{align*} 
where we introduced the shorthand notations 
\[ 
G=\widehat G(z), \qquad G'=\widehat G(z'). 
\]
Note that we have redefined the notation $G$  but it should not create any confusion since  the full resolvent matrix $G(z)$ will not appear any more
in the rest of the paper. To keep the notation simple we generally index the $(N-1)\times(N-1)$ matrices $G,G'$ and the $(N-1)$ vector $h$ by integers $\{2,\dots,N\}$. In particular, all sums involving $G$ and $G'$ run from $2$ to $N$ if not stated otherwise. 
We then compute the expectation $\E_1=\E(\cdot\lvert H^{(1)})$ conditioned on $H^{(1)}$ to find 
\begin{align}\label{eq:EXX}
\E_1[X(z)X(z')]&=  N\sum_{i\not=j} \left(G_{ij}G'_{ji} \E\abs{h_i}^2\abs{h_j}^2+G_{ij}G'_{ij} \E \overline{h_i^2}h_j^2 \right)+N\sum_i \E\left[\abs{h_i}^2-\frac{1}{N}\right]^2 G_{ii}G'_{ii}\\ &= \frac{1}{N}\sum_{i\not=j}\left(G_{ij}G'_{ji}+\abs{\sigma_2}^2 G_{ij}G'_{ij} \right)+\frac{\sigma_4-1}{N}\sum_i G_{ii}G'_{ii}\nonumber \\
 &=\frac{1}{N}\sum_{i\not=j}\left(G_{ij}G'_{ji}+\abs{\sigma_2}^2 G_{ij}G'_{ij} \right)+(\sigma_4-1)m(z)m(z')+\stO{\frac{1}{\sqrt{N\eta}}+\frac{1}{\sqrt{N\eta'}}+\frac{1}{N\sqrt{\eta\eta'}}},\nonumber
\end{align} where  we recall that $\E h_{ij}^2=\sigma_2/N$ for $i<j$ and $\E h_{ij}=\overline{\sigma_2}/N$ for $i>j$. For the computation of the first term we need a lemma:
\begin{lemma}\label{lemma:trGG}
Let $\eta,\eta'>0$. Then for $z,z'$ with $\abs{\Im z}= \eta$ and $\abs{\Im z'}=\eta'$ it holds that 
\begin{align}\label{eq:trGG}
\frac{1}{N}\sum_{i\not=j}G_{ij}G'_{ji}=\frac{m(z)^2m(z')^2}{1-m(z)m(z')}+ \stO{\frac{1}{(\eta+\eta')\sqrt{N\eta\eta'}} \Big[\frac{1}{\sqrt\eta} +
\frac{1}{\sqrt{\eta'}} + \frac{1}{\sqrt{N\eta\eta'}}\Big]
} 
\end{align} 
and 
\begin{align}\label{eq:trGGt}\frac{1}{N}\sum_{i\not=j}G_{ij}G'_{ij}&=m(z)m(z')\frac{(1+m(z)m(z')\Re \sigma_2) \frac{\tan[m(z)m(z')\Im\sigma_2]}{m(z)m(z')\Im\sigma_2}-1}{1-\Re \sigma_2\frac{\tan[m(z)m(z')\Im\sigma_2]}{\Im\sigma_2}}\\ &\qquad\nonumber+
 \stO{\frac{1}{(\eta+\eta')\sqrt{N\eta\eta'}} \Big[\frac{1}{\sqrt\eta} +
\frac{1}{\sqrt{\eta'}} + \frac{1}{\sqrt{N\eta\eta'}}\Big]}
\end{align} 
(if $\Im \sigma_2=0$, then we use the convention that $\tan x/x =0$ for $x=0$). 
\end{lemma}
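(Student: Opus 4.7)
The plan is to treat the two identities \eqref{eq:trGG} and \eqref{eq:trGGt} separately, because only the first reduces to a trace and can be handled by resolvent identities, while the second requires cumulant/Stein expansion.

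For \eqref{eq:trGG} I would start from the resolvent identity $G(z)G(z') = [G(z)-G(z')]/(z-z')$, which taking the trace gives exactly
\[
\frac{1}{N}\Tr(GG') = \frac{m_N(z)-m_N(z')}{z-z'}.
\]
Subtracting the diagonal piece $\frac{1}{N}\sum_i G_{ii}G'_{ii}$ and using the averaged local law $m_N(z)=m(z)+\stO{(N\eta)^{-1}}$ for the trace together with the entrywise law $G_{ii}=m(z)+\stO{(N\eta)^{-1/2}}$ for the diagonal yields
\[
\frac{1}{N}\sum_{i\neq j}G_{ij}G'_{ji} = \frac{m(z)-m(z')}{z-z'}-m(z)m(z')+\text{error}.
\]
An elementary computation using $z=-m(z)-1/m(z)$ converts $\frac{m(z)-m(z')}{z-z'}$ into $\frac{m(z)m(z')}{1-m(z)m(z')}$, which after subtracting $m(z)m(z')$ gives the claimed main term. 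The tricky part of this step is the error bound in the regime $|z-z'|\lesssim\eta+\eta'$, where I would pass from a difference quotient to a derivative, then control $\frac{1}{N}\Tr G(z^*)^2-m'(z^*)$ on a slightly enlarged scale; outside that regime the bound $|z-z'|^{-1}\le(\eta+\eta')^{-1}$ together with the averaged law is enough, and combining both regimes produces the stated $\stO{\cdot}$ bound.

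The second identity \eqref{eq:trGGt} is not reducible to a trace, so I would use Gaussian/Stein-type integration by parts (cumulant expansion) applied to the defining identity $zG_{ij}=\sum_k h_{ik}G_{kj}-\delta_{ij}$ obtained from $(H-z)G=I$. Multiplying by $G'_{ij}$, summing, and taking expectation, each $\E[h_{ik}\,G_{kj}G'_{ij}]$ is expanded as
\[
\E[h_{ik}F]=\frac{1}{N}\E[\partial_{\bar h_{ik}}F]+\frac{\sigma_2}{N}\E[\partial_{h_{ik}}F]+\text{higher cumulants}.
\]
Computing $\partial_{h_{ab}}G_{cd}=-G_{ca}G_{bd}$ and organizing the resulting contractions, one of the $\partial_{\bar h_{ik}}$ pieces produces $\frac{1}{N}\Tr(GG')$ (handled in Part~1), while the $\sigma_2$-piece from $\E h_{ik}^2=\sigma_2/N$ produces a term proportional to $\sigma_2\cdot\frac{1}{N}\sum_{i\neq j}G_{ij}G'_{ij}$ itself. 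With $S\defeq\frac{1}{N}\sum_{i\neq j}G_{ij}G'_{ij}$ this yields a self-consistent equation of schematic form
\[
S = A + \sigma_2\,m(z)m(z')\,S + B(\sigma_2) + \text{errors},
\]
where $B(\sigma_2)$ collects higher-order contributions that depend on whether $\sigma_2$ is real or complex. Iterating this relation resums a geometric-like series whose closed form is exactly the $\tan$-expression in \eqref{eq:trGGt} (the appearance of $\Im\sigma_2$ tracks the difference between $h_{ik}$ and $\bar h_{ik}$ derivatives).

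The main obstacle in both parts is the bookkeeping of error terms, which must respect the precise shape $\stO{(\eta+\eta')^{-1}(N\eta\eta')^{-1/2}[\eta^{-1/2}+\eta'^{-1/2}+(N\eta\eta')^{-1/2}]}$. For Part~1 this is a matter of splitting the $|z-z'|$ regimes carefully and using only the optimal local laws that are already available. For Part~2 the difficulty is sharper: one needs a two-resolvent local law to control the remainder in the cumulant expansion (terms like $\frac{1}{N}\sum_k G_{ik}G_{kj}G'$-type sums) simultaneously in $z$ and $z'$, and to show that the iterative resummation does not accumulate errors beyond the stated bound. This multi-resolvent control, together with the $\sigma_2$-geometric-series closing into the $\tan$ formula, is where the real work lies; the rest is essentially algebraic.
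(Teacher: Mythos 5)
Your Part~1 argument is correct and in fact more elementary than the paper's. You observe that since $G$ and $G'$ are resolvents of the \emph{same} matrix $\widehat H$, the identity $GG' = (G-G')/(z-z')$ reduces $\frac{1}{N}\Tr(GG')$ directly to $\frac{m_N(z)-m_N(z')}{z-z'}$, after which the algebra $z=-m-1/m$ collapses the difference quotient to $\frac{mm'}{1-mm'}$ and subtracting the diagonal $\approx mm'$ gives $\frac{m^2m'^2}{1-mm'}$. The paper instead expands $G_{ij}$ via the Schur complement, takes the partial expectation $\E_j$, and closes a scalar self-consistent equation using fluctuation averaging. Your route avoids fluctuation averaging entirely; the only delicate point is the regime $|z-z'|\lesssim\eta+\eta'$, which forces $\eta\asymp\eta'$, where you can control the difference quotient of $m_N-m$ by a Cauchy integral over a contour of radius $\asymp\eta$ to get $\stO{(N\eta^2)^{-1}}$, which fits the stated bound. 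This is a clean alternative.

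Part~2, however, has a genuine gap. You write a scalar self-consistent equation of the form $S = A + \sigma_2\,m(z)m(z')\,S + B(\sigma_2) + \text{errors}$ with $S=\frac{1}{N}\sum_{i\ne j}G_{ij}G'_{ij}$, and assert that iterating it resums to the $\tan$ expression. But a scalar relation of that type resums to $A/(1-\sigma_2 mm')$, not to anything involving $\tan$. Crucially, $\E h_{ij}^2 = \sigma_2/N$ holds only for $i<j$, while $\E h_{ij}^2 = \overline{\sigma_2}/N$ for $i>j$. This index-ordering asymmetry means the feedback term in the cumulant expansion is \emph{not} proportional to $S$ itself, so the self-consistent equation does not close at the scalar level. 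The paper makes this explicit: one must derive a \emph{vector} self-consistent equation for $v^{(i)}_j = \E_j[G_{ij}G'_{ij}]$, which takes the form $[\1 - m(z)m(z')F]v^{(i)} = m(z)^2m(z')^2[F - \frac{1}{N}\1]e_i + \text{error}$, where $F_{jk}=\E h_{kj}^2$ is a structured matrix $F=\frac{1}{N}\1 + (\Re\sigma_2)(ee^T-\frac{1}{N}\1) + i(\Im\sigma_2)S$ with $S$ the skew Toeplitz kernel of the Heaviside function. Inverting requires an $\ell^\infty\to\ell^\infty$ bound on $(\1-mm'F)^{-1}$, and the $\tan$ formula then emerges from computing $\braket{e,(\1-\alpha S)^{-1}e}=\tanh\alpha/\alpha$ via Bernoulli numbers (or, equivalently, the Neumann series of the Heaviside kernel). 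None of this structure is anticipated in your ``$B(\sigma_2)$ collects higher-order contributions'' placeholder; the $\tan$ is not a resummation artifact but the explicit value of a specific matrix element of a non-normal inverse, and without that computation the formula cannot be recovered.
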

We remark that the $(\eta+\eta')^{-1}$ factor in the 
error term can be substantially improved if $\Im z$ and $\Im z'$ has the same sign, see e.g. \cite{2013AnHP...14.1837E} for the special $z=z'$ case, but the same argument works  in the general case.  
\begin{proof}
The proof of this lemma follows the techniques used in \cite{2013AnHP...14.1837E}. 
We let $G^{(j)}$ denote the resolvent of the minor of $\widehat H$ after removing the $j$-th row and column. We have the
resolvent identity
$$
G_{ij} = -G_{ii} \sum_j^{(i)} G_{ik}^{(j)}h_{kj}, \qquad i\ne j,
$$
where the summation runs over all $j=2,3, \ldots, N$ except $j=i$; this exclusion is indicated with the upper index on the summation.
Using the local semicircle law \eqref{eq:localSC}, we find that for any fixed $i$ 
\begin{align}\label{eq:firstEqLemma}\frac{1}{N}\sum_{j}^{(i)} \E_j [G_{ij} G'_{ji}]&=\frac{1}{N}\sum_{j}^{(i)} \E_j \left[\frac{m(z)m(z')}{G_{jj}G'_{jj}}G_{ij} G'_{ji}\right]+\stO{\Psi}\\
\nonumber &=\frac{1}{N}m(z)m(z')\sum_{j}^{(i)}\E_j\left[\left(\sum_{k}^{(j)}G^{(j)}_{ik}h_{kj}\right)\left(\sum_{l}^{(j)} h_{jl} G'^{(j)}_{li} \right)\right]+\stO{\Psi}\\
\nonumber &=\frac{1}{N^2}m(z)m(z')\sum_{j}^{(i)}\sum_{k}^{(j)}G^{(j)}_{ik}G'^{(j)}_{ki}+\stO{\Psi}\\
\nonumber &=\frac{1}{N^2}m(z)m(z')\sum_{j}^{(i)}\left[\sum_{k}^{(ij)}G_{ik}G'_{ki}+G_{ii}G'_{ii}\right]+\stO{\Psi}\\
\nonumber &=\frac{1}{N}m(z)m(z')\left[\sum_{k}^{(i)}G_{ik}G'_{ki}+m(z)m(z')\right]+\stO{\Psi}\end{align} where in the fourth equality we used \[G_{ik}^{(j)}=G_{ik}-\frac{G_{ij}G_{jk}}{G_{jj}}=G_{ik}+\stO{\frac{1}{N\eta}}\] and the analogous identity for $G'$ and we introduced the short hand notation \[\Psi=\frac{1}{\sqrt{N^3\eta^2\eta'}}+\frac{1}{\sqrt{N^3\eta\eta'^2}}+\frac{1}{N^2\eta\eta'}\] for the error term. 
We now follow the fluctuation averaging analysis from \cite[Proof of Prop.~5.3 in Sections 6--7]{2013AnHP...14.1837E}.
This proof was  given for the case when the spectral parameters of the resolvents were identical, $z=z'$,
but a simple inspection shows  that the argument verbatim also applies to the $z\not=z'$ case. We conclude that 
\begin{equation}\label{eq:FluctAv}
\frac{1}{N}\sum_{j}^{(i)}G_{ij}G'_{ji}=\frac{1}{N}\sum_{j}^{(i)}\E_j[G_{ij}G'_{ji}]+\stO{\Psi}.
\end{equation} 
 Therefore, after summing over $i$ we have \begin{align}\label{eq:firstConclusion}\left[1-m(z)m(z')\right]\frac{1}{N}\sum_{j\not= i}G_{ij}G'_{ji}=m(z)^2m(z')^2+\stO{N\Psi}.\end{align} To finish the proof, we note that by an elementary calculation 
 \begin{equation}\label{eq:etaeta}
 \frac{1}{\abs{1-m(z)m(z')}}\le\frac{C}{\eta+\eta'} 
 \end{equation} since \begin{align}\abs{m(x+i\eta)}\leq 1-c\abs{\eta} \label{eq:mEstimate}\end{align}
  and therefore \[\frac{1}{N}\sum_{i\not=j} G_{ij}G'_{ji}=\frac{m(z)^2m(z')^2}{1-m(z)m(z')}+\stO{\frac{N\Psi}{\eta+\eta'}}.\]
  This completes the proof of \eqref{eq:trGG}.
 
 For the proof of eq.~\eqref{eq:trGGt} we have to derive a vector self-consistent equation instead of the scalar one. We again start by noting that for $i\not=j$ 
 \begin{align*}\E_j G_{ij}G'_{ij}&=m(z)m(z') \sum_k^{(j)} G_{ik}G'_{ik}\E h_{kj}^2+\stO{\Psi}\\
 &=m(z)m(z')\sum_k^{(i)} G_{ik}G'_{ik}\E h_{kj}^2+m(z)^2m(z')^2\E h_{ij}^2+\stO{\Psi}\\
 &=m(z)m(z')\sum_k^{(i)} F_{jk}  \E_k G_{ik}G'_{ik} +m(z)^2m(z')^2 F_{ji}+\stO{\Psi},\end{align*} where we introduced 
 the matrix $F$ with matrix elements 
 \[
 F_{jk}\defeq\E h_{kj}^2= \frac{1}{N}\Big[ \1(k<j)\sigma_2+\1(k>j)\overline{\sigma_2}+\1(k=j)\Big].
 \]  
 For every fixed $i$, 
 we have therefore derived a self-consistent equation for the (column) vector \[v^{(i)}=\left((1-\delta_{ij})\E_j G_{ij}G'_{ij}\right)_{j=2}^{N}\] 
 which can be written as 
  \[\left[\1-m(z)m(z')F\right]v^{(i)}= m(z)^2m(z')^2 \left[F-\frac{1}{N}\1\right]e_i+\stO{\Psi},\] 
 where 
   $e_i = (0,0,\ldots 1, \ldots 0)^T$ is the standard $i$-th basis vector of $\C^{N-1}$. 
   To invert this equation while controlling the error term, we have estimate \[ \norm{\left[\1-m(z)m(z') F\right]^{-1}}_{\ell^\infty\to\ell^\infty}.\] 
   To do so, we first note that \[\norm{\left[\1-m(z)m(z') F\right]^{-1}}_{\ell^2\to\ell^2}
   \leq \left(1-\abs{m(z)}\abs{m(z')}\norm{F}_{\ell^2\to\ell^2}\right)^{ -1}
   \leq \frac{C}{\eta+\eta'}\;,\] where we used that $F$ is Hermitian and of norm at most $1$ and \eqref{eq:mEstimate}
    (the norm here is induced by the usual $\ell^2$ norm $\norm{u}_2 \defeq (\sum_i \abs{u_i}^2)^{1/2}$ on $\C^{N-1}$). 
   Next, if $(1-m(z)m(z')F)u=v$, then 
   \[
   \norm{u}_\infty\leq \norm{v}_\infty+\norm{Fu}_\infty\leq \frac{C}{\eta+\eta'}\norm{v}_\infty,
  \]
  where we used 
   \begin{align*}
  \norm{Fu}_\infty\leq \frac{1}{N}\norm{u}_1\leq \frac{1}{\sqrt{N}}\norm{u}_2 = \frac{1}{\sqrt{N}}\norm{\left[\1-m(z)m(z') F\right]^{-1}v}_2 \leq
  \frac{C}{\eta+\eta'}\frac{1}{\sqrt{N}}\norm{v}_2\leq \frac{C}{\eta+\eta'}\norm{v}_\infty,
 \end{align*}
  so that also \[\norm{\left[\1-m(z)m(z') F\right]^{-1}}_{\ell^\infty\to\ell^\infty}\leq\frac{C}{\eta+\eta'}
 \]
  for $\eta,\eta'\le C$.  After inversion we find that \[v^{(i)}=m(z)^2m(z')^2 \left(\1-m(z)m(z')F\right)^{-1}\left(F-\frac{1}{N}\1\right)e_i+\stO{\frac{\Psi}{\eta+\eta'}}.\]
 Using fluctuation averaging once more (see \eqref{eq:FluctAv}) we can conclude that
 \begin{align}\label{eq:GG'}
\frac{1}{N}\sum_{i\not=j}G_{ij}G'_{ij}&=\frac{1}{N}\sum_{i\not=j}\E_j G_{ij}G'_{ij}+\stO{N\Psi}\\ 
& = m(z)^2m(z')^2e^T \left(\1-m(z)m(z')F\right)^{-1}\left(F-\frac{1}{N}\1\right)e+\stO{\frac{N\Psi}{\eta+\eta'}}, \nonumber
\end{align} where $e= N^{-1/2} (1,\dots,1)^T \in \C^{N-1}  $. 
We now introduce the  $(N-1)\times(N-1)$  matrix \[S\defeq \frac{1}{N} \left( \begin{matrix}
0 & 1 &\dots &1\\
-1 & \ddots & \ddots & \vdots\\
\vdots & \ddots & \ddots  & 1 \\
-1 &\dots&-1& 0
\end{matrix}  \right).\] 
 Notice that $F = \frac{1}{N}\1+(\Re \sigma_2) (ee^T-\frac{1}{N}\1) + i ( \Im \sigma_2) S$. We find, through an elementary computation, that 
\begin{align*}
&m(z)m(z') \braket{ e,  \left(\1-m(z)m(z')F\right)^{-1}\left(F-\frac{1}{N}\1\right)e}
\\&\qquad\qquad=\frac{(1+m(z)m(z')\Re \sigma_2) \braket{e, \left(\1-im(z)m(z')\Im \sigma_2 S\right)^{-1}e}-1}{1-m(z)m(z')\Re \sigma_2\braket{e, \left(\1-im(z)m(z')\Im \sigma_2 S\right)^{-1}e}}+\stO{N^{-1}}.
\end{align*} 
It remains to compute 
\[\braket{e,(\1-\alpha S)^{-1}e}=\sum_{k=0}^\infty \alpha^k\braket{e,S^ke} \] for $\alpha\in\C$ with $\abs{\alpha}<1$. For any vector $f\in \C^{N-1}$, 
\[ (Sf)_n = -\frac{1}{N}\sum_{n'<n}f_{n'}+\frac{1}{N}\sum_{n'>n}f_{n'}=\frac{1}{N}\sum_{n'=2}^N h_{n-n'}f_{n'}\] 
where $ h_k\defeq \1(k<0)-\1(k>0)$. Therefore 
 \[\braket{e,S^k e}=N^{-1/2}\sum_{n=2}^N (S^ke)_n=N^{-3/2}\sum_{n,n'=2}^N h_{n-n'} (S^{k-1}e)_{n'}=\dots=N^{-k-1}\sum_{n_0,\dots,n_k=2}^N h_{n_0-n_1}\dots h_{n_{k-1}-n_k}.\] By symmetry, $\braket{e,S^ke}=0$ for odd $k$. 
 Otherwise one finds via a Riemann sum approximation that 
 \[\braket{e,S^{2k}e}=\int_{0}^1\dots\int_0^1 h(x_0-x_1)\dots h(x_{2k-1}-x_{2k})\diff x_0\dots\diff x_{2k}+\landauO{N^{-1}},\]
 where $h(x)=\1(x<0)-\1(x>0)$ is the Heaviside function and where we added the missing $n_i=1$ terms at an expense of $\landauO{N^{-1}}$. Via an easy induction we see
   that \[\braket{e,S^{2k}e}=(-1)^k \frac{2^{2k}(2^{2k}-1)}{(2k)!}B_{2k}+\landauO{N^{-1}},\] where $B_k$ is the $k$-th Bernoulli number. Consequently, 
\[\braket{e,(\1-\alpha S)^{-1}e}=\frac{\tanh\alpha}{\alpha}+\landauO{N^{-1}}.\] We now use this with $\alpha=i m(z)m(z')\Im\sigma_2$ to conclude that 
\begin{align*}m(z)m(z')\braket{e, \left(\1-m(z)m(z')F\right)^{-1}\left(F-\frac{1}{N}\1\right)e}&= \frac{(1+m(z)m(z')\Re \sigma_2) \frac{\tan[m(z)m(z')\Im\sigma_2]}{m(z)m(z')\Im\sigma_2}-1}{1-m(z)m(z')\Re \sigma_2\frac{\tan[m(z)m(z')\Im\sigma_2]}{m(z)m(z')\Im\sigma_2}}+\stO{N^{-1}}.\end{align*} 
 Combining this with \eqref{eq:GG'}, we obtain  \begin{align*}
\frac{1}{N}\sum_{i\not=j}G_{ij}G'_{ij} =& m(z)m(z')\frac{(1+m(z)m(z')\Re \sigma_2) \frac{\tan[m(z)m(z')\Im\sigma_2]}{m(z)m(z')\Im\sigma_2}-1}{1-\Re \sigma_2\frac{\tan[m(z)m(z')\Im\sigma_2]}{\Im\sigma_2}}+\stO{\frac{N\Psi}{\eta+\eta'}}.
\end{align*}
We note that, in general, this is a finite expression since $\abs{\Re \sigma_2}\leq \sqrt{1-(\Im\sigma_2)^2}$ and thus in the non-trivial case where $\Re\sigma_2\not=0$ and $\Im\sigma_2\not=0$, \[\abs{\Re\sigma_2\frac{\tan[m(z)m(z')\Im\sigma_2]}{\Im\sigma_2}}<\abs{\sqrt{1-(\Im\sigma_2)^2}\frac{\tan[\Im\sigma_2]}{\Im\sigma_2}}\leq 1.\tag*{\qedhere} 
\]
\end{proof}
We readily check that integrating the error terms in \eqref{eq:varIntegral} from \eqref{eq:EXX} and Lemma \ref{lemma:trGG} only contributes an error of magnitude $N^{-7/6}$ and conclude that if $\sigma_2=0$, then 
\begin{align}\label{eq:F1comp}F_N'&=\frac{1}{2N}\Re\iint_{-10}^{10}\iint_{\eta_0}^{10} g(z)g(\bar z')\left[\frac{m(z)^2m(\bar z')^2}{1-m(z)m(\bar z')}+(\sigma_4-1)m(z)m(\bar z') \right]\\&\quad -g(z)g(z')\left[\frac{m(z)^2m( z')^2}{1-m(z)m( z')}+(\sigma_4-1)m(z)m( z') \right]\diff\eta\diff\eta'\diff x\diff x' +  \stO{N^{-7/6}} \nonumber\\ &=\nonumber \frac{1}{2N}\Re\iint_{-10}^{10}\iint_{\eta_0}^{10} g(z)g(\bar z')\left[\sum_{k=2}^\infty [m(z)m(\bar z')]^k+(\sigma_4-1)m(z)m(\bar z') \right]\\&\quad -g(z)g( z')\left[\sum_{k=2}^\infty [m(z)m( z')]^k+(\sigma_4-1)m(z)m( z') \right]\diff\eta\diff\eta'\diff x\diff x'+\stO{N^{-7/6}}\nonumber\\ 
&= \frac{1}{N}\sum_{k=2}^\infty\left( \Im\int_{-10}^{10}\int_{\eta_0}^{10} g(z)m(z)^k\diff\eta\diff x\right)^2+\frac{\sigma_4-1}{N}\left( \Im\int_{-10}^{10}\int_{\eta_0}^{10} g(z)m(z)\diff\eta\diff x\right)^2+\stO{N^{-7/6}}\nonumber\\ 
& = \frac{1}{N}\sum_{k=2}^\infty\left( \frac{1}{\pi}\Im\int_{-10}^{10}  \frac{\partial_\eta f_\C(z_0)}{i} m(z_0)^{k+1}\diff x\right)^2+\frac{\sigma_4-1}{N}\left( \frac{1}{\pi}\Im\int_{-10}^{10}  \partial_x f_\C(z_0) m(z_0)^{2}\diff x\right)^2+\stO{N^{-7/6}}\nonumber\\
&=\frac{1}{N}\sum_{k=0}^\infty\left( \frac{1}{\pi}\Im\int_{-10}^{10}  \frac{\partial_\eta f_\C(z_0)}{i} m(z_0)^{k+1}\diff x\right)^2+\frac{\sigma_4-2}{N}\left( \frac{1}{\pi}\Im\int_{-10}^{10}  f'(x) m(x)^2\diff x\right)^{2}\nonumber\\&\qquad\qquad-\frac{1}{N}\left( \frac{1}{\pi}\Im\int_{-10}^{10}  f'(x) m(x)\diff x\right)^{2}+\stO{N^{-7/6}},\nonumber\end{align} 
where $z_0=x+i\eta_0$ and in the penultimate step we used Lemma \ref{lemma:stokes}  to write 
\begin{align*}\Im\int_{-10}^{10}\int_{\eta_0}^{10}g(z)m(z)^k\diff \eta \diff x &= \frac{2}{\pi}\Im\int_{-10}^{10}\int_{\eta_0}^{10} [\partial_{\bar z}\partial_\eta f_\C(z)]m(z)^{k+1}\diff \eta \diff x=\Im \frac{1}{i\pi}\int_{-10}^{10} \partial_\eta f_\C(z_0) m(z_0)^{k+1}\diff \eta \diff x \end{align*} and that \begin{align}\frac{\partial_\eta f_\C(z_0)}{i}=\partial_x f_\C (z_0)+\landauO{\eta_0}=f'(x)+\landauO{\eta_0}.\label{eq:almostanalytic}\end{align}

 Now that we reduced the area integral to a line integral, we go the geometric series steps backwards to further simplify the first term as
  \begin{align}\label{eq:deltaIntegral}
&\frac{1}{N}\sum_{k=0}^\infty\left( \frac{1}{\pi}\Im\int_{-10}^{10} \frac{\partial_\eta f_\C(z_0)}{i} m(z_0)^{k+1}\diff x\right)^2\\
&=\frac{1}{2N\pi^2}\Re\iint_{-10}^{10} \Bigg[\left(\frac{\partial_\eta f_\C(z_0)}{i}\right) \overline{\left(\frac{\partial_\eta f_\C(z_0')}{i}\right)} \frac{m(z_0)\overline{m(z_0')}}{1-m(z_0)\overline{m(z_0')}}-\left(\frac{\partial_\eta f_\C(z_0)}{i}\right) \left(\frac{\partial_\eta f_\C(z_0')}{i}\right)\frac{m(z_0){m(z_0')}}{1-m(z_0){m(z_0')}}\Bigg] \diff x\diff x'.\nonumber
\end{align} 
We would now like to approximate \eqref{eq:deltaIntegral} using \eqref{eq:almostanalytic}. For doing so, we have to control the error terms via the following lemma whose proof  we postpone to the end of the section.
\begin{lemma}\label{lemma:eq:errorBoundLogEta}
There exists an absolute constant $C$ such that for $z_0=x+i\eta_0$ and $z_0'=x'+i\eta_0$ with $0<\eta_0\le 1/2$ it holds that \begin{align}
 \iint_{-10}^{10} \frac{1}{\abs{1-m(z_0)\overline{m(z_0')}}}\diff x \diff x' \le C\abs{\log \eta_0} \quad\text{and}\quad \iint_{-10}^{10} \frac{1}{\abs{1-m(z_0){m(z_0')}}}\diff x \diff x' \le C\abs{\log \eta_0}.\label{eq:errorBoundLogEta}
\end{align} 
\end{lemma}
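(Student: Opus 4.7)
The main tool will be the algebraic identity
\[
1 - m(z) m(w) = \frac{(z-w)\, m(z) m(w)}{m(z) - m(w)},
\]
valid whenever $m(z) \neq m(w)$. It is derived from the self-consistent equation $m(\zeta) + 1/m(\zeta) = -\zeta$ by subtracting the relations at $\zeta = z$ and $\zeta = w$ and simplifying. Taking moduli and rearranging gives
\[
\frac{1}{\abs{1 - m(z) m(w)}} = \frac{\abs{m(z) - m(w)}}{\abs{z-w}\,\abs{m(z)}\,\abs{m(w)}}.
\]
Throughout the argument I use that there is an absolute constant $c > 0$ with $\abs{m(z)} \ge c$ for every $z = x + i\eta$, $x \in [-10,10]$, $\eta \in (0,10]$; this follows by direct inspection of the explicit formula or equivalently from $\abs{m(z)}^2 = \Im m(z)/(\eta + \Im m(z))$ on the relevant compact set.

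For the first integral I apply the identity with $w = \overline{z_0'}$, so that $m(w) = \overline{m(z_0')}$. The numerator $\abs{m(z_0) - \overline{m(z_0')}}$ is bounded by $2$ since $\abs{m} \le 1$, while $\abs{z_0 - \overline{z_0'}} = \sqrt{(x-x')^2 + 4\eta_0^2}$. Consequently the problem reduces to estimating
\[
\iint_{-10}^{10} \frac{\diff x\,\diff x'}{\sqrt{(x-x')^2 + 4\eta_0^2}},
\]
and computing the inner integral via an $\operatorname{arcsinh}$ yields a bound of order $\abs{\log \eta_0}$ uniformly in the outer variable, as required.

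For the second integral, $w = z_0'$ gives $\abs{z_0 - z_0'} = \abs{x-x'}$, with no imaginary regularization, so the only source of cancellation is the factor $\abs{m(z_0) - m(z_0')}$ in the numerator. I use the representation
\[
m(z_0) - m(z_0') = (z_0 - z_0') \int_0^1 m'\bigl( s z_0 + (1-s) z_0' \bigr)\, \diff s,
\]
valid because the segment stays at height $\eta_0$. After substitution and using the lower bound on $\abs{m}$, the task reduces to controlling
\[
\int_0^1 \diff s \iint_{-10}^{10} \abs{ m'(sx + (1-s)x' + i\eta_0) } \, \diff x\, \diff x'.
\]
The change of variables $u = sx + (1-s)x'$, $v = x - x'$ has unit Jacobian, the $v$-range is uniformly bounded, and the $u$-integral reduces to $\int_{-10}^{10} \abs{m'(u+i\eta_0)}\, \diff u$. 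From the explicit formula $m'(\zeta) = \tfrac12\bigl(-1 + \zeta/\sqrt{\zeta^2-4}\bigr)$ one checks that $\abs{m'(u + i\eta_0)}$ has an integrable square-root singularity at the spectral edges $u = \pm 2$, so this integral is in fact $\landauO{1}$ uniformly in $\eta_0$, which is even stronger than the claimed $\landauO{\abs{\log \eta_0}}$.

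The only nontrivial ingredient is the algebraic identity; once it is available both bounds reduce to elementary calculus of the Stieltjes transform of the semicircle. I expect the uniform integrability of $\abs{m'(\cdot + i\eta_0)}$ near the edges $\pm 2$ to be the most delicate point, though it is completely standard.
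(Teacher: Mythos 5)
Your proposal is correct, and it takes a genuinely different route from the paper. The paper directly lower-bounds $\abs{1-m(z_0)\overline{m(z_0')}}$ by a case split: for $\max\{\abs{x},\abs{x'}\}\ge 2$ it uses $1-\abs{m(z_0)}^2=\eta_0/(\eta_0+\Im m(z_0))$ together with $\abs{1-m(z_0)^2}\asymp\sqrt{\kappa_x+\eta_0}$, and for $\abs{x},\abs{x'}<2$ it writes $1-m(z_0)\overline{m(z_0')}=1-\abs{m(z_0')}^2+(m(z_0')-m(z_0))\overline{m(z_0')}$ and invokes the bulk estimate $\Re m'(u+i\eta_0)\le -c$. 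You instead pass through the algebraic identity $1-m(z)m(w)=(z-w)m(z)m(w)/(m(z)-m(w))$, legitimate because both $z_0$ and $\overline{z_0'}$ satisfy $m(\zeta)+1/m(\zeta)=-\zeta$; this collapses the whole lemma into controlling the Lipschitz constant of $m$, at the cost of a uniform lower bound $\abs{m}\ge c$ on the compact domain (true by compactness and the non-vanishing of a Stieltjes transform, though note that the formula $\abs{m}^2=\Im m/(\eta+\Im m)$ you cite degenerates to $0/0$ as $\eta_0\downarrow 0$ for $\abs{x}>2$, so the ``direct inspection'' justification is the one that really works uniformly). Your route is shorter and in fact sharper for the second bound: there the ratio $\abs{m(z_0)-m(z_0')}/\abs{x-x'}$ cancels the $1/\abs{x-x'}$ singularity outright and leaves only the uniformly integrable square-root edge singularity of $m'$, yielding $\landauO{1}$ rather than the $\landauO{\abs{\log\eta_0}}$ the paper needs and asserts. (The identity fails on the measure-zero diagonal $x=x'$, but the original integrand is finite there, so nothing is lost.) What the paper's more laborious case analysis buys is robustness: it uses only local-law-type estimates such as $\abs{1-m^2}\asymp\sqrt{\kappa+\eta}$ that carry over to self-consistent equations beyond the semicircle, whereas your identity is wedded to the specific quadratic relation $m+1/m=-z$.
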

 Using Lemma \ref{lemma:eq:errorBoundLogEta} and \eqref{eq:almostanalytic} we can rewrite \eqref{eq:deltaIntegral} as 
\begin{align*}
\frac{1}{2N\pi^2}\Re\iint_\R f'(x)f'(x')\left[\frac{m(z_0)\overline{m(z_0')}}{1-m(z_0)\overline{m(z_0')}}- \frac{m(z_0){m(z_0')}}{1-m(z_0){m(z_0')}}\right]\diff x\diff x'+\landauO{\eta_0 \abs{\log\eta_0}}.
\end{align*} 
Now, an explicit computation shows \begin{align}\label{eq:VarIntFinal}
\Re\left[\frac{m(z_0)\overline{m(z_0')}}{1-m(z_0)\overline{m(z_0')}}-\frac{m(z_0)m(z_0')}{1-m(z_0)m(z_0')}\right]&=\Re\frac{-2i\Im m(z_0')}{-x-i\eta_0-2\Re m(z_0')+m(z_0)[\abs{m(z_0')}^2-1]}.
\end{align} and therefore for small $\eta_0$ and $(x,x')$ outside the square $[-2,2]^2$ the integrand of \eqref{eq:VarIntFinal} negligible. 
For $(x,x')\in[-2,2]^2$ and small $\eta_0$ we have \[\Re\frac{-2i\Im m(z_0')}{-x-i\widetilde \eta-2\Re m(z_0')+m(z_0)[\abs{m(z_0')}^2-1]}=\frac{\sqrt{4-x'^2}\eta_0}{(x-x')^2+\eta_0^2}+\landauO{\eta_0}.\] This expression acts like \[\pi\sqrt{4-x^2}\delta(x'-x)\] for small $\eta_0$. More formally, it is well known that for any $L^2$-function $h$ \[\lim_{\eta\to0}\int_{\R}\frac{\eta}{(x-x')^2+\eta^2}h(x')\diff x' = \pi h(x) \] in $L^2$-sense.
Working out an effective error term for $h\in H^1$, this
  allows us to conclude 
  \begin{align*}F_N'=\frac{1}{N}\left[\int_{-2}^{2}\rho(x)f'(x)^2\diff x-\left(\int_{-2}^2 \rho(x)f'(x)\diff x\right)^2+(\sigma_4-2)\left(\int_{-2}^2\rho(x) x f'(x)\diff x\right)^2\right]+\stO{N^{-7/6}}. 
\end{align*}
The computation for $\widetilde F_N'$ from \eqref{eq:EtildeF2}, still assuming $\sigma_2=0$, is completely analogous and there we also have \begin{align*}\widetilde F_N'=\frac{1}{N}\left[\int_{-2}^{2}\rho(x)f'(x)^2\diff x-\left(\int_{-2}^2 \rho(x)f'(x)\diff x\right)^2+(\sigma_4-2)\left(\int_{-2}^2\rho(x)xf'(x)\diff x\right)^2\right]+\stO{N^{-7/6}}. 
\end{align*} We can now conclude from eqs. \eqref{eq:EF2} and \eqref{eq:EtildeF2} that \begin{align}\label{eq:EFN}
\E F_N^2 = F_N' +\frac{s_{11}}{N}\left(\int_{-2}^2 \rho(x)f'(x)\diff x\right)^2+\stO{N^{-7/6}} = \frac{V_f}{N}+\stO{N^{-7/6}}
\end{align} 
and \begin{align}\label{eq:EFNtilde}
\E \widetilde F_N^2 = \widetilde F_N' +\frac{s_{11}}{N}\left(\int_{-2}^2\rho(x)\frac{xf''(x)}{2}\diff x\right)^2+\stO{N^{-7/6}} = \frac{\widetilde V_f}{N}+\stO{N^{-7/6}}.
\end{align}

 So far we assumed $\sigma_2=0$ in \eqref{eq:EXX}. We now consider the general case for which we need \eqref{eq:trGGt} instead of \eqref{eq:trGG}. A similar analysis shows that we have to add an additional term $\abs{\sigma_2}^2 V_{\sigma_2}$ to $V_f$ and $\widetilde V_f$  both in  \eqref{eq:EFN} and \eqref{eq:EFNtilde}, given by
\begin{align}\label{eq:Fsigma2}
V_{\sigma_2}\defeq&\frac{1}{2\pi^2}\Re\iint_\R f'(x)f'(x')\Big[m(z_0)^2\overline{m(z_0')^2}\frac{(1+m(z_0)\overline{m(z_0')}\Re \sigma_2) \frac{\tan[m(z)\overline{m(z_0')}\Im\sigma_2]}{\Im\sigma_2}-m(z)\overline{m(z_0')}}{1-\Re \sigma_2\frac{\tan[m(z_0)\overline{m(z_0')}\Im\sigma_2]}{\Im\sigma_2}}\\&\qquad\qquad\qquad\qquad\qquad - m(z_0)^2m(z_0')^2\frac{(1+m(z_0){m(z_0')}\Re \sigma_2) \frac{\tan[m(z_0){m(z_0')}\Im\sigma_2]}{\Im\sigma_2}-m(z_0){m(z_0')}}{1-\Re \sigma_2\frac{\tan[m(z_0){m(z_0')}\Im\sigma_2]}{\Im\sigma_2}}\Big]\diff x \diff x'.\nonumber
\end{align} 
For the special case $\sigma_2\in \R$ eq.~\eqref{eq:trGGt} simplifies to  \[\frac{1}{N}\sum_{i\not=j}G_{ij}G'_{ij}=\frac{m(z)^2m(z')^2\Re \sigma_2 }{1- m(z)m(z')\Re \sigma_2}+\stO{\frac{1}{(\eta+\eta')\sqrt{N\eta^2\eta'}}+\frac{1}{(\eta+\eta')\sqrt{N\eta\eta'^2}}+\frac{1}{N(\eta+\eta')\eta\eta'}}.\] In particular, for symmetric $H$, where $\sigma_2=1$ we find that eq.~\eqref{eq:Fsigma2} simplifies to $V_{\sigma_2}=V_{f,1}$. 
This completes the proof of Proposition \ref{prop:fluctuations}, modulo the proof of Lemma \ref{lemma:eq:errorBoundLogEta}.

\renewcommand*{\proofname}{Proof of Lemma \ref{lemma:eq:errorBoundLogEta}}
\begin{proof}
The proof of the second inequality is similar to the first one and will be left to the reader. For the first inequality, we split the integration in two regimes. We shall make use of the fact (see, e.g., \cite{2012arXiv1212.0164E}) that on a compact domain, say $\abs{z_0}\le 10$, we have \begin{align}
\abs{1-m(z_0)^2}\asymp \sqrt{\kappa_x+\eta}\quad\text{and}\quad \Im m(z_0)\asymp \begin{cases}
\sqrt{\kappa_x+\eta_0} &\text{if} \;\; \abs{x}\leq2,\\
\frac{\eta_0}{\sqrt{\kappa_x+\eta_0}} &\text{else},
\end{cases}\label{eq:compactSCestimate}
\end{align}
where $\kappa_x=\abs{\abs{x}-2}$ is the distance to the edge. 

Firstly in the region where $\max\{\abs{x},\abs{x'}\}\geq2$, we find \[\abs{1-m(z_0)\overline{m(z'_0)}}\geq\frac{1}{2}\left[1-\abs{m(z_0)}^2+1-\abs{m(z'_0)}^2\right]\geq c\sqrt{\kappa_{\max\{\abs{x},\abs{x'}\}}+\eta_0}, \] where  $c>0$ is a universal constant, due to the fact that $1-\abs{m(z_0)}^2=\eta_0/\Im m(z_0)$ and \eqref{eq:compactSCestimate}. 

Secondly, in the region where $\abs{x},\abs{x'}<2$, we write \[1-m(z_0)\overline{m(z'_0)}=1-\abs{m(z_0')}^2+(m(z_0')-m(z_0))\overline{m(z_0')}\] and estimate \[\abs{(m(z_0')-m(z_0))\overline{m(z_0')}}\geq c\abs{x'-x}\] 
 for some positive constant $c$. This inequality follows from writing \[\Re[m(z_0')-m(z_0)]=\int_{x}^{x'} \Re m'(u+i\eta_0)\diff u\] and 
from the estimate \[\Re m'(u+i\eta_0)=-\frac{2(\Im m(u+i\eta_0))^2}{\abs{1-m(u+i\eta_0)^2}^2}\leq -c\] for $\abs{u}\leq 2$,
where we used \eqref{eq:compactSCestimate} in the last step.
Consequently, \[\abs{1-m(z_0)\overline{m(z'_0)}}\geq c\abs{x-x'}-\abs{1-\abs{m(z_0')}^2}\geq c\abs{x-x'}-C\frac{\eta_0}{\sqrt{\kappa_{x'}+\eta_0}}\] and it follows that $\abs{1-m(z_0)\overline{m(z_0')}}\geq c\abs{x-x'}/2$ whenever $\abs{x-x'}\geq 2 (C/c) \eta_0/\sqrt{\kappa_{x'}+\eta_0}$. Together with the trivial bound $\abs{1-m(z_0)\overline{m(z_0')}}\geq c\eta$ we find that the integral in \eqref{eq:errorBoundLogEta} is bounded by $C\abs{\log\eta_0}$.
\end{proof}
\renewcommand*{\proofname}{Proof}

 \section{Computation of Higher Moments}\label{sec:moments}

We now turn to the computation of higher order moments and thereby to the completing the proof of Theorem \ref{thm:mainThm}. We recall 
from \eqref{eq:Fexpansion}--\eqref{eq:Ftildeexpansion}  that 
\[F_N=-\frac{1}{\sqrt N}\Im\int_{\R}\int_{\eta_0}^{10}g(z)\left[X(z)-\sqrt N h_{11}\right]\diff \eta\diff x+\stO{N^{-2/3}}\] and \[\widetilde F_N=-\frac{1}{\sqrt N}\Im\int_{\R}\int_{\eta_0}^{10}g(z)\left[ X(z+h_{11})+\sqrt N h_{11}m'(z)\right]\diff \eta\diff x+\stO{N^{-2/3}},
\]
 where $g$ and $X$ were defined in \eqref{def:gX}. In order to compute moments of $F_N$ and $\widetilde F_N$ we have to compute \[\E [X(z_1)\dots X(z_k)]\] for any
$k\in\N$ and $z_l\in\C\setminus\R$, $l=1,\dots,k$. We will first take the expectation with respect to the vector $h$ in the $X$'s 
which leads to a cyclic contraction of the indices of $\widehat G$. After taking the expectation
with respect to $\widehat H$, we will show that the leading order terms come
from cycles of length two.  This will  effectively show that the Wick theorem holds for the random variables $X$.
 The following lemma shows that cyclic products of at least three resolvents are in fact of lower order (the same phenomenon already was observed in \cite{2013AnHP...14.1837E}):
\begin{lemma}\label{lemma:cycles}
For closed cycles of length $k>2$ we have that \begin{align}
N^{-k/2}\sum^{\sim}_{i_1,\dots,i_k}\E G^{(1)}_{i_1i_2}\dots G^{(k-1)}_{i_{k-1}i_k}G^{(k)}_{i_ki_1}\prec \frac{1}{\left(\max_a\eta_a\right)\sqrt{N\eta_1\dots\eta_k }} \sum_{a=1}^k \frac{1}{\sqrt{\eta_a}} ,
\label{eq:HigherTraces}
\end{align} 
and for open cycles of any length $k>1$ we have that 
\begin{align}
N^{-(k+1)/2}\sum^\sim_{i_1,\dots, i_{k}}\E G^{(1)}_{i_1i_2}\dots G^{(k-1)}_{i_{k-1}i_{k}}\prec \frac{1}{\sqrt{N\eta_1\dots\eta_{k-1} }} \sum_{a=1}^{k-1} \frac{1}{\sqrt{\eta_a}} ,
\label{eq:HigherTracesFree}
\end{align} 
where $G^{(l)}\defeq \widehat G(z_l)$, $z_l\in\C\setminus\R$ with $\eta_l=\abs{\Im z_l}$ for $l=1,\dots,k$  and $\sum\limits^\sim$ indicates that the sum is performed over pairwise distinct indices. 
 Moreover, the same bound holds true when any of the $G^{(l)}$ are replaced by their transposes or Hermitian conjugates.
\end{lemma}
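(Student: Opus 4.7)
The plan is to bound these cyclic sums using a combination of Hilbert--Schmidt and operator norm estimates on matrix products (with the relevant norms controlled by the local semicircle law for $\widehat H$) and, for the shortest open cycle, fluctuation averaging to exploit the expectation. The scheme adapts the analysis of \cite{2013AnHP...14.1837E}, which treated the case of coincident spectral parameters; a direct inspection shows the arguments there carry over to distinct $z_l$.

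First I would reduce each distinct-index sum to a leading matrix-product expression plus correction terms via inclusion--exclusion. For a closed cycle,
\[\sum^{\sim}_{i_1,\dots,i_k}G^{(1)}_{i_1i_2}\cdots G^{(k)}_{i_ki_1}=\Tr\bigl(G^{(1)}\cdots G^{(k)}\bigr)+(\text{index-collision corrections}),\]
and analogously for an open cycle with leading object $\braket{\mathbf 1,G^{(1)}\cdots G^{(k-1)}\mathbf 1}$. The correction terms correspond to partitions of $\{1,\dots,k\}$ that identify some indices; each such correction is a shorter cycle multiplied by bounded diagonal factors $G^{(l)}_{ii}\prec 1$ (using \eqref{eq:localSCentry}) and is handled by induction on $k$.

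Next I would estimate the leading trace by the inequality
\[\bigl|\Tr(A_1\cdots A_k)\bigr|\le\|A_a\|_{HS}\|A_b\|_{HS}\prod_{l\ne a,b}\|A_l\|_{op},\]
valid for any $a\ne b$ by cyclic invariance together with Cauchy--Schwarz, combined with the deterministic bound $\|G^{(l)}\|_{op}\le 1/\eta_l$ and the Ward-identity bound $\|G^{(l)}\|_{HS}^2=\Im\Tr G^{(l)}/\eta_l\prec N/\eta_l$ (the latter uses $\Im\Tr G^{(l)}\prec N$ from \eqref{eq:localSC}). Choosing $a,b$ to be the two indices carrying the largest $\eta$'s then gives, for $k\ge 3$, a bound that is already stronger than the one claimed in \eqref{eq:HigherTraces}. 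The open-cycle case with $k\ge 3$ follows the same way, using $|\braket{\mathbf 1,A\mathbf 1}|\le N\|A\|_{HS}$ together with $\|G^{(1)}\cdots G^{(k-1)}\|_{HS}\le\|G^{(a)}\|_{HS}\prod_{l\ne a}(1/\eta_l)$. The extension to $G^{(l)\top}$ or $G^{(l)*}$ is immediate since these operations preserve both norms.

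The remaining marginal case is the open cycle $k=2$, where the deterministic HS/op-norm estimate is not sufficient and one must exploit the expectation: for $i\ne j$ one has the improved bound $\E\,\widehat G_{ij}(z)\prec 1/(N\eta)$ (rather than the entry-wise $1/\sqrt{N\eta}$), which follows from the resolvent identity $\widehat G_{ij}=-\widehat G_{ii}\sum_k^{(i)}\widehat G^{(i)}_{ik}h_{kj}$, conditional expectation in row $i$, and the large deviation estimate \eqref{eq:LargeDeviation}. This is precisely the fluctuation averaging step of \cite[Sections 6--7]{2013AnHP...14.1837E}. The main obstacle I anticipate is the combinatorial bookkeeping in the inclusion--exclusion reduction: the number of correction terms grows super-exponentially in $k$, and one must verify that each is of strictly lower order than the leading trace so that their sum does not spoil the bound. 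Once the combinatorial structure is under control, the norm estimates combined with the local semicircle law yield the stated bounds directly.
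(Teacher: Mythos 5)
Your proposal takes a genuinely different route from the paper. The paper proves Lemma~\ref{lemma:cycles} by applying the resolvent identity $G^{(1)}=\widehat H G^{(1)}/z_1-\mathds 1/z_1$ to the first factor, performing a truncated cumulant expansion in the entries of the row indexed by $i_1$, and then using the resulting self-consistent equation together with the algebraic identity $z_1+m(z_1)+m(z_k)=\big(m(z_1)m(z_k)-1\big)/m(z_1)$; the inverse factor $\big(1-m(z_1)m(z_k)\big)^{-1}$, bounded via \eqref{eq:etaeta}, is the source of the $(\max_a\eta_a)^{-1}$ in the stated error. Your approach instead extracts the leading contribution as a matrix product and controls it deterministically through $\abs{\Tr(A_1\cdots A_k)}\le\norm{A_a}_{HS}\norm{A_b}_{HS}\prod_{l\ne a,b}\norm{A_l}_{op}$ together with the Ward identity $\norm{G^{(l)}}_{HS}^2=\Im\Tr G^{(l)}/\eta_l\prec N/\eta_l$ and $\norm{G^{(l)}}_{op}\le 1/\eta_l$, invoking the expectation only for the $k=2$ open cycle. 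I verified that this indeed yields a bound at least as strong as \eqref{eq:HigherTraces} for $k\ge 3$, and in fact for equal spectral parameters your bound $N^{1-k/2}/\eta^{k-1}$ improves on the paper's $k N^{-1/2}\eta^{-(k+3)/2}$ by a factor of order $\eta$; the index-collision corrections are controllable by the crude entrywise bound $\abs{G^{(l)}_{ij}}\prec(N\eta_l)^{-1/2}+\delta_{ij}$ and the count $\abs{\sigma}\le k-1$. As a bonus, your deterministic estimates directly give the no-expectation statement (the Corollary), which the paper obtains from a separate fluctuation-averaging argument. Two small corrections: to minimize your bound you should take $a,b$ to be the indices with the \emph{smallest} $\eta$'s, not the largest (taking the largest still works but only up to a constant factor, and is not ``already stronger''); and your reduction of a correction term to ``a shorter cycle times bounded diagonal factors'' is too loose a description, since, e.g., the fully coincident term $\sum_i G^{(1)}_{ii}\cdots G^{(k)}_{ii}$ is not a shorter cycle but a single $O(N)$ sum---fortunately the normalization $N^{-k/2}$ kills it. The $k=2$ open case is the genuine bottleneck, and your plan to use $\E\widehat G_{ij}\prec(N\eta)^{-1}$ for $i\ne j$ is sound (it follows from the resolvent identity $\widehat G_{ij}=-\widehat G_{ii}\sum_k^{(i)}h_{ik}\widehat G^{(i)}_{kj}$, the centering $\E h_{ik}=0$, and the local law fluctuation bound on $\widehat G_{ii}-m$), though this is not literally the fluctuation-averaging lemma cited; note also that the isotropic local law applied to $\braket{e,\widehat G e}$ gives the no-expectation bound here too, which you would need for the Corollary.
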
 
\begin{proof} 
We first prove eq.~\eqref{eq:HigherTraces} and assume a real symmetric $H$. To do so, we let $\epsilon>0$ be arbitrary and will actually prove 
\[N^{-k/2}\sum^\sim_{i_1,\dots, i_k}\E G^{(1)}_{i_1i_2}\dots G^{(k-1)}_{i_{k-1}i_k}G^{(k)}_{i_ki_1}\prec \frac{N^{\epsilon}}{(\eta_1+\eta_k)\sqrt{N\eta_1\dots\eta_k }} \sum_{a=1}^k \frac{1}{\sqrt{\eta_a}}, \]
from which \eqref{eq:HigherTraces} follows due to the definition of $\prec$ in Definition \ref{def:stochDom}. We make use of the resolvent identity $G^{(1)}=\widehat H G^{(1)}/z_1-1/z_1$ to write \begin{align}
N^{-k/2}\sum^\sim_{i_1,\dots,i_k}\E G^{(1)}_{i_1i_2}\dots G^{(k)}_{i_ki_1} = \frac{1}{N^{k/2}z_1}\sum^\sim_{i_1,\dots,i_k}\sum_n \E h_{i_1n}G^{(1)}_{ni_2}G^{(2)}_{i_2i_3}\dots G^{(k)}_{i_ki_1}.\label{eq:longTraceExp}
\end{align} 
We use the standard cumulant expansion (introduced in the context of random matrices in \cite{:/content/aip/journal/jmp/37/10/10.1063/1.531589}) up to the third order term with a truncation  
\begin{equation}\label{eq:cumulant}
\E h f(h)=\E h\E f(h)+\E h^2 \E f'(h)+\landauO{ \E \abs{h^{3}\1(\abs{h}>N^{\tau-1/2})}\norm{f''}_\infty}+\landauO{\E\abs{h}^3\sup_{\abs{x}\leq N^{\tau-1/2}}\abs{f''(x)}},
\end{equation}
 where  $f$ is any smooth function of a real random variable $h$,  such that the expectations exist and $\tau>0$ is arbitrary
(for a recent similar use of this formula with truncation see \cite[Lemma 3.1]{2016arXiv160301499H}). This yields
\begin{align}\label{eq:productRule}
\E h_{i_1n}G^{(1)}_{ni_2}G^{(2)}_{i_2i_3}\dots G^{(k)}_{i_ki_1} &= \frac{1}{N}\E\frac{\partial \left[G^{(1)}_{ni_2}G^{(2)}_{i_2i_3}\dots G^{(k)}_{i_ki_1}\right]}{\partial h_{i_1n}}+R\\&=\frac{1}{N}\E\frac{\partial G^{(1)}_{ni_2}}{\partial h_{i_1n}}G^{(2)}_{i_2i_3}\dots G^{(k)}_{i_ki_1}+\frac{1}{N}\sum_{a=2}^k\E \frac{\partial G^{(a)}_{i_ai_{a+1}}}{\partial h_{i_1n}}G^{(1)}_{ni_2}\prod_{a\not= b=2}^kG^{(b)}_{i_bi_{b+1}}+R,
\nonumber\end{align} where it is understood that $i_{k+1}=i_{1}$ and $R$ is the error term resulting from the cumulant expansion. Using the identity 
\[\frac{\partial G_{ij}}{\partial h_{kl}}=-(G_{ik}G_{lj}+G_{il}G_{kj})/(1+\delta_{kl}),
\] 
and the local law \eqref{eq:localSCentry}, the first term on the rhs.~of eq.~\eqref{eq:productRule} becomes \begin{align*}
	- (G^{(1)}_{n i_1}G^{(1)}_{n i_2}+G^{(1)}_{n n}G^{(1)}_{i_1i_2}) G^{(2)}_{i_2i_3}\dots G^{(k)}_{i_ki_1}=-m(z_1) G^{(1)}_{i_1i_2}\dots G^{(k)}_{i_ki_1} + \stO{\frac{1}{N^{k/2+1/2}\sqrt{\eta\eta_1}}},
\end{align*} whenever $n\not = i_1,i_2$ and where $\eta\defeq \eta_1\dots \eta_k$. If $n=i_1$ or $n=i_2$, we shall make use of the trivial estimate 
\[- (G^{(1)}_{n i_1}G^{(1)}_{n i_2}+G^{(1)}_{n n}G^{(1)}_{i_1 i_2}) G^{(2)}_{i_2i_3}\dots G^{(k)}_{i_ki_1}\prec \frac{1}{N^{k/2}\sqrt{\eta}}.\] The $a=k$ summand of the second term in eq.~\eqref{eq:productRule} becomes \begin{align}\label{eq:ak}
- (G^{(k)}_{i_ki_1}G^{(k)}_{n i_{1}}+G^{(k)}_{i_{k} n}G^{(k)}_{i_1 i_1}) G^{(1)}_{n i_2}\dots G^{(k-1)}_{i_{k-1}i_{k}}=-m(z_{k})G^{(1)}_{n i_{2}}\dots G^{(k)}_{i_{k}n} + \stO{\frac{1}{N^{k/2+1/2}\sqrt{\eta\eta_k}}}
\end{align} whenever $n\not = i_{1}, i_{k}$. For these exceptional $n$ we shall again use the trivial $ N^{-k/2}\eta^{-1/2}$ estimate. For $a\not =k$ 
the summand in the second term of eq.~\eqref{eq:productRule} can always be estimated by \[- (G^{(a)}_{i_{a} i_{1}}G^{(a)}_{n i_{a+1}}+G^{(a)}_{i_{a} n}G^{(a)}_{i_{1} i_{a+1}}) G^{(1)}_{n i_{2}}\prod_{a\not= b=2}^kG^{(b)}_{i_{b}i_{b+1}}\prec \frac{1}{N^{k/2}\sqrt{\eta}} \] and this bound can be improved to \[- (G^{(a)}_{i_{a} i_{1}}G^{(a)}_{n i_{a+1}}+G^{(a)}_{i_{a} n}G^{(a)}_{i_{1} i_{a+1}}) G^{(1)}_{n i_{2}}\prod_{a\not= b=2}^kG^{(b)}_{i_{b}i_{b+1}}\prec \frac{1}{N^{k/2+1/2}\sqrt{\eta\eta_a}}, \] whenever $n\not\in\{i_{1},\dots,i_{k}\}$. 
Thus for most of the  $\landauO{N^{k+1}}$  terms in the sum in eq.~\eqref{eq:longTraceExp}  we have the improved bound, while
for  $\landauO{N^k}$ terms, where $n=i_{l}$ for some $l$, we use the weaker bound 
 and we find that 
 \begin{align}&\label{eq:scEGG}
N^{-k/2}\sum^\sim_{i_{1},\dots, i_{k}}\sum_n\frac{1}{N}\E\frac{\partial \left[G^{(1)}_{ni_2}G^{(2)}_{i_2i_3}\dots G^{(k)}_{i_ki_1}\right]}{\partial h_{i_1n}}\\&\qquad=\frac{1}{N^{k/2+1}z_{1} }\sum^\sim_{n,i_{1},\dots, i_{k}} \left[ -m(z_{1}) \E G^{(1)}_{i_{1} i_{2}}\dots G^{(k)}_{i_{k}i_{1}} -m(z_{k})\E G^{(1)}_{n i_{2}}\dots G^{(k)}_{i_{k}n} \right]+\frac{1}{z_{1}}\stO{\sum_{a=1}^k\frac{1}{\sqrt{N\eta \eta_{a}}}}.\nonumber
\end{align} It remains to estimate the error $R$. To do so we have to compute the second derivatives 
 \[\frac{\partial^2 \left[G^{(1)}_{ni_2}G^{(2)}_{i_2i_3}\dots G^{(k)}_{i_ki_1}\right]}{\partial h_{i_1n}^2}\] which is a polynomial in $G^{(l)}_{ab}$ for $l\in\{1,\dots,k\}$, $a,b\in\{i_1,\dots,i_k,n\}$ of total degree $k+2$ with  at most $2$ diagonal factors for $n\not\in\{i_1,\dots,i_k\}$, and otherwise with at most $3$ diagonal factors in every monomial. These factors each satisfy the entry-wise local  law \eqref{eq:localSCentry}, but now we need these
 estimates even uniformly for all $\abs{h_{i_1n}}\le N^{\tau-1/2}$ which does not directly follow from the
 concept of stochastic domination. To circumvent this technical issue, we need to explicitly display the
 dependence of the resolvents $G^{(l)}$ on $h_{i_1n}$.
 We therefore write $\widetilde H$ for the matrix $\widehat H$ 
 with the $(i_1, n)$ and $(n,i_1)$ entries set to $0$ and $\widetilde G^{(l)}=(\widetilde H-z_l)^{-1}$. Note that $\widetilde G^{(l)}$  is independent of $h_{i_1n}$.
Since $\widetilde G^{(l)}$ is the resolvent of a generalized Wigner matrix, from \cite{Erdos20121435, 2012arXiv1212.0164E}
 we have the usual resolvent estimates \eqref{eq:localSC}--\eqref{eq:localSCentry} for $\widetilde G^{(l)}$. 
Moreover, if $i_1\not=n$, then by the resolvent identity \begin{align*}G^{(l)}_{ab}=\widetilde G^{(l)}_{ab}-h_{i_1n}\left[\widetilde G^{(l)}_{an}\widetilde G^{(l)}_{i_1b}+\widetilde G^{(l)}_{ai_1}\widetilde G^{(l)}_{nb}\right]+h_{i_1n}^2\left[\widetilde G^{(l)}_{an}\widetilde G^{(l)}_{i_1n}G^{(l)}_{i_1b}+\widetilde G^{(l)}_{an}\widetilde G^{(l)}_{i_1i_1}G^{(l)}_{nb}+\widetilde G^{(l)}_{ai_1}\widetilde G^{(l)}_{nn}G^{(l)}_{i_1b}+\widetilde G^{(l)}_{ai_1}\widetilde G^{(l)}_{ni_1}G^{(l)}_{nb}\right]\end{align*}
and we can estimate \[\max_{a\not= b}\sup_{\abs{h_{i_1n}}\leq N^{-1/2+\tau}}G^{(l)}_{ab}\prec \frac{N^{\tau}}{\sqrt{N\eta_l}}, \qquad \max_{a}\sup_{\abs{h_{i_1n}}\leq N^{-1/2+\tau}}G^{(l)}_{aa}\prec 1\] whenever $\tau<1/12$ where we used the trivial bound $G_{ab}^{(l)}\leq 1/\eta_l\leq N^{2/3}$. On the other hand, if $i_1=n$, then we have \begin{align*}
G^{(l)}_{ab}=\widetilde G^{(l)}_{ab}-h_{nn}\widetilde G^{(l)}_{an}\widetilde G^{(l)}_{nb}+h_{nn}^2\widetilde G^{(l)}_{an}\widetilde G^{(l)}_{nn}G^{(l)}_{nb}
\end{align*} and therefore again \[\max_{a\not= b}\sup_{\abs{h_{nn}}\leq N^{-1/2+\tau}}G^{(l)}_{ab}\prec \frac{N^{\tau}}{\sqrt{N\eta_l}},
\qquad \max_{a}\sup_{\abs{h_{nn}}\leq N^{-1/2+\tau}}G^{(l)}_{aa}\prec 1\] whenever $\tau<1/12$. Therefore \[\sup_{\abs{h_{i_1n}}<N^{-1/2+\tau}}\abs{\frac{\partial^2 \left[G^{(1)}_{ni_2}G^{(2)}_{i_2i_3}\dots G^{(k)}_{i_ki_1}\right]}{\partial h_{i_1n}^2}}  \prec \sum_{a=1}^k\frac{N^{k\tau}N^{-k/2}}{\sqrt{\eta\eta_a}}\] and we can conclude 
\begin{align}\frac{1}{N^{k/2}z_1}\sum^\sim_{i_1,\dots, i_k}\sum_n\E \abs{h_{1_1n}}^3 \sup_{\abs{h_{i_1n}}<N^{-1/2+\tau}}\abs{\frac{\partial^2 \left[G^{(1)}_{ni_2}G^{(2)}_{i_2i_3}\dots G^{(k)}_{i_ki_1}\right]}{\partial h_{i_1n}^2}}\prec \sum_{a=1}^k\frac{N^{k\tau}N^{-1/2}}{\sqrt{\eta\eta_a}}.\label{eq:R}\end{align}
We can now pick $\tau=\min\{\frac{1}{12},\frac{\epsilon}{k}\}$ to have a final estimate of  order \[\sum_{a=1}^k \frac{N^{\epsilon}}{\sqrt{N\eta\eta_a}}\] for the error  originating from the last term in the truncated cumulant expansion \eqref{eq:cumulant}. The remaining error \begin{align}\E \abs{h_{i_1n}^3\1(\abs{h_{i_1n})}>N^{\tau-1/2}}\sup_{h_{i_1n}}\abs{\frac{\partial^2 \left[G^{(1)}_{ni_2}G^{(2)}_{i_2i_3}\dots G^{(k)}_{i_ki_1}\right]}{\partial h_{i_1n}^2}}\label{eq:R2}\end{align} is negligible for any fixed $k$ since the expectation is smaller than any power of $N^{-\tau}$
due to the arbitrary polynomial decay  \eqref{moments}.

  Putting together \eqref{eq:scEGG}, the identity \[z_1+m(z_1)+m(z_k)=\frac{m(z_1)m(z_k)-1}{m(z_1)}\] and the estimates on $R$ from \eqref{eq:R}--\eqref{eq:R2} we have shown that 
 \[N^{-k/2}\sum_{i_{1}\not=\dots\not= i_{k}}\E G^{(1)}_{i_{1}i_{2}}\dots G^{(k)}_{i_{k}i_{1}}=\frac{m(z_{1})}{1-m(z_{1})m(z_{k})}\landauO{\sum_{a=1}^k\frac{N^{\epsilon}}{\sqrt{N\eta \eta_{a}}}}=\stO{\sum_{a=1}^k\frac{N^{\epsilon}}{(\eta_{1}+\eta_{k})\sqrt{N\eta \eta_{a}}}}.\] 
  Since the lhs.~of this estimate is cyclic in $i_1,\dots,i_k$, we can replace $\eta_1+\eta_k$ in the error term by $\max_a\eta_a$.

For the proof of eq.~\eqref{eq:HigherTracesFree} we follow essentially the same steps but for the last $a=k-1$ term we find \begin{align*}
- (G^{(k-1)}_{i_{k-1}i_1}G^{(k-1)}_{n i_{k}}+G^{(k-1)}_{i_{k-1} n}G^{(k-1)}_{i_1 i_k}) G^{(1)}_{n i_2}\dots G^{(k-2)}_{i_{k-2}i_{k-1}}\prec\frac{1}{N^{k/2}\sqrt{\eta\eta_{k-1}}}
\end{align*}
instead of eq.~\eqref{eq:ak}. Consequently, eq.~\eqref{eq:scEGG} becomes \begin{align*}&
N^{-(k+1)/2}\sum_{i_{1}\not=\dots\not= i_{k}}\E G^{(1)}_{i_{1}i_{2}}\dots G^{(k-1)}_{i_{k-1}i_{k}}\\&\qquad=\frac{1}{N^{(k+1)/2+1}z_{1} }\sum_{n\not =i_{1}\not=\dots\not= i_{k}} \left[ -m(z_{1}) \E G^{(1)}_{i_{1} i_{2}}\dots G^{(k-1)}_{i_{k-1}i_{k}} \right]+\frac{1}{z_{1}}\stO{\sum_{a=1}^{k-1}\frac{N^{\epsilon}}{\sqrt{N\eta \eta_{a}}}}\nonumber
\end{align*}
from which eq.~\eqref{eq:HigherTracesFree} follows immediately.

For the last claim, note that none of the estimates above relied on the order of the indices of any $G^{(l)}$ and the same bound holds true in the case of any transpositions.

 The proof of the Hermitian case is similar, but the cumulant expansion has to be replaced by a complex variant (as in, e.g. \cite[Lemma 7.1]{2016arXiv160301499H}).
\end{proof}
Next, we note that the bounds \eqref{eq:HigherTraces}--\eqref{eq:HigherTracesFree} also hold true without taking expectations:
\begin{corollary}
In the setup of Lemma \ref{lemma:cycles}, for closed cycles of length $k\geq 2$ we have that \begin{align}
N^{-k/2}\sum^{\sim}_{i_1,\dots,i_k} G^{(1)}_{i_1i_2}\dots G^{(k-1)}_{i_{k-1}i_k}G^{(k)}_{i_ki_1}\prec \frac{1}{\left(\max_a\eta_a\right)\sqrt{N\eta_1\dots\eta_k }} \sum_{a=1}^k \frac{1}{\sqrt{\eta_a}} ,
\label{eq:HigherTracesR}
\end{align} 
and for open cycles of any length $k>1$ we have that 
\begin{align}
N^{-(k+1)/2}\sum^\sim_{i_1,\dots, i_{k}} G^{(1)}_{i_1i_2}\dots G^{(k-1)}_{i_{k-1}i_{k}}\prec \frac{1}{\sqrt{N\eta_1\dots\eta_{k-1} }} \sum_{a=1}^{k-1} \frac{1}{\sqrt{\eta_a}}.
\label{eq:HigherTracesFreeR}
\end{align} 
\end{corollary}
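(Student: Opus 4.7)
The plan is to reduce stochastic domination to a moment bound. Recall that to establish $X \prec Y$ for a deterministic positive $Y$ it suffices to verify $\E \abs{X}^{2p}\le C_{p,\epsilon}\, Y^{2p} N^{\epsilon}$ for every $p\in\N$ and every $\epsilon>0$, since then Markov's inequality gives $\P(\abs{X}>N^\delta Y)\le C_{p,\epsilon} N^{\epsilon-2p\delta}$ which can be made smaller than any power of $N^{-1}$ by choosing $p$ large. So the task reduces to computing high moments of the sums on the lhs.~of \eqref{eq:HigherTracesR}--\eqref{eq:HigherTracesFreeR}.

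Let $X$ denote the lhs.~of \eqref{eq:HigherTracesR}. Using $\overline{\widehat G_{ij}(z)}=\widehat G_{ji}(\bar z)$, we may write
\[
\abs{X}^{2p} \;=\; N^{-pk}\!\!\sum^{\sim}_{i^{(1)}}\cdots\!\sum^{\sim}_{i^{(2p)}}\,\prod_{q=1}^{p} \widetilde G^{(1,q)}_{i^{(q)}_1 i^{(q)}_2}\!\cdots \widetilde G^{(k,q)}_{i^{(q)}_k i^{(q)}_1},
\]
where each $\widetilde G^{(l,q)}$ is a resolvent of $\widehat H$ at one of the spectral parameters $z_1,\dots,z_k,\bar z_1,\dots,\bar z_k$ and $\sum^{\sim}_{i^{(q)}}$ runs over $i^{(q)}_1,\dots,i^{(q)}_k$ pairwise distinct. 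Apart from being a polynomial of higher degree in the resolvent entries, this is structurally identical to the single-cycle sum treated in Lemma \ref{lemma:cycles}. I would therefore apply the same resolvent-identity--plus--cumulant-expansion scheme to $\E \abs{X}^{2p}$: fix one index inside one of the $2p$ cycles, use $\widetilde G^{(1,1)}=\widehat H \widetilde G^{(1,1)}/z_1 - 1/z_1$, and expand $\E h_{i^{(1)}_1 n}(\cdots)$ via the truncated cumulant formula \eqref{eq:cumulant}. The same two-category analysis applies: (i) contractions that stay inside the active cycle produce the factor $m(z_1)$ and reconstruct a cycle one resolvent shorter, driving an inductive step whose recursion is the same as in \eqref{eq:scEGG}; (ii) contractions of $h_{i^{(1)}_1 n}$ with a derivative coming from another cycle produce a cross-term that gains an extra factor of the required off-diagonal size $(N\eta)^{-1/2}$ from the local semicircle law \eqref{eq:localSCentry}, hence is subleading. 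The cumulant-truncation error is handled exactly as in \eqref{eq:R}--\eqref{eq:R2} using the moment bounds in \eqref{moments}.

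Collecting these contributions, by induction on the total resolvent degree $2pk$ one obtains
\[
\E \abs{X}^{2p} \;\le\; Y^{2p}\, N^{C p\epsilon},
\qquad Y\;\defeq\; \frac{1}{\max_a \eta_a}\frac{1}{\sqrt{N\eta_1\cdots\eta_k}}\sum_{a=1}^k \frac{1}{\sqrt{\eta_a}},
\]
for any $\epsilon>0$ and $N\ge N_0(p,\epsilon)$. Since this holds for every $p$, the first claim of the corollary follows. The open-cycle bound \eqref{eq:HigherTracesFreeR} is obtained by the same argument, replacing the closed-cycle input with the open-cycle statement \eqref{eq:HigherTracesFree} and repeating the inductive step with an open cycle of length $k$ in each of the $2p$ factors.

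The main obstacle is the combinatorial bookkeeping for the cross-contractions between different cycles, which must be organised so that every such contraction effectively gains the off-diagonal factor $(N\eta)^{-1/2}$ predicted by the local law; this is the standard content of the fluctuation averaging argument of \cite{2013AnHP...14.1837E} generalized from two to arbitrarily many cycles, and is routine although tedious. All other ingredients—the cumulant expansion with truncation, the entrywise local law \eqref{eq:localSCentry}, and the trivial bound $\abs{G_{ab}}\le 1/\eta$—are inherited from the proof of Lemma \ref{lemma:cycles}.
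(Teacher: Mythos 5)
Your reduction of the stochastic bound to a high-moment estimate, followed by iterating the resolvent-identity/cumulant-expansion machinery across $2p$ copies of the cycle, is precisely the fluctuation averaging argument, and you correctly identify it as the content of \cite{2013AnHP...14.1837E} generalized to several spectral parameters. The paper's proof is exactly the same observation, only stated as a citation rather than a re-derivation: the fluctuation averaging analysis of \cite[Prop.~5.3, Sections 6--7]{2013AnHP...14.1837E} nowhere uses $z_1=\dots=z_k$, so it applies verbatim; no new combinatorial work is required.
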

\begin{proof}
We note that the fluctuation averaging analysis from \cite[Proof of Prop.~5.3 in Sections 6--7]{2013AnHP...14.1837E} does not rely on the fact $z_1=\dots=z_k$ and therefore also applies to the present case.
\end{proof}
The following lemma shows an asymptotic Wick theorem for $X$'s, i.e. that higher moments of $X$ to leading order only involve pairings:
\begin{lemma}\label{lemma:pairings}
For $k\geq 2$ and $z_1,\dots,z_k\in\C$ with $\abs{\Im z_l}=\eta_l>0$ we have that
\begin{align} \E[X(z_1)\dots X(z_k)]&= \sum_{\pi\in P_2([k])}\prod_{\{a,b\}\in \pi}\E[X(z_a)X(z_b)]+\stO{\frac{1}{\sqrt{N\eta_1\dots\eta_k}}\sum_{a\not= b}\frac{1}{(\eta_a+\eta_b)\sqrt{\eta_a}}},
\end{align} where $[k]\defeq\{1,\dots,k\}$ and $P_2(L)$ are the partitions of a set $L$ into subsets of size $2$.
\end{lemma}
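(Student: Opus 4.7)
\smallskip
\noindent\textbf{Proof plan.} The strategy is to reduce the computation of $\E[X(z_1)\cdots X(z_k)]$ to the two-point case already analysed in \eqref{eq:EXX}--\eqref{eq:GG'}, by first evaluating the conditional expectation with respect to the column vector $h$ (given $\widehat H$), and then controlling the non-pairing contributions via the cyclic bounds of Lemma~\ref{lemma:cycles}.

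\emph{Step 1 (decomposition).} Split $X(z_l)=Y_l+Z_l$, where
\[Y_l\defeq\sqrt{N}\sum_{i\ne j}\bar h_i\widehat G_{ij}(z_l)h_j,\qquad Z_l\defeq\sqrt{N}\sum_{i}\bigl(\abs{h_i}^2-1/N\bigr)\widehat G_{ii}(z_l),\]
so that $\prod_l X(z_l)=\prod_l(Y_l+Z_l)$ is, upon expansion, a polynomial of degree $2k$ in the independent variables $\{h_i,\bar h_i\}_{i=2}^N$ with coefficients that are polynomials in the entries of $\widehat G(z_1),\ldots,\widehat G(z_k)$ and are therefore $\widehat H$-measurable.

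\emph{Step 2 (contraction graph and Wick-like selection).} Let $\E_1\defeq\E[\cdot\,|\,\widehat H]$. Since the $h_i$ are centred and independent, a monomial survives $\E_1$ only if every index $i$ appearing does so with a matched multi-set of $h_i$'s and $\bar h_i$'s. Encode each surviving monomial by a multigraph $\Gamma$ on vertex set $[k]=\{1,\dots,k\}$: connect $a,b\in[k]$ whenever the contraction identifies an $h$-index of the factor from $a$ with one from $b$. Because each $Y_l$ and $Z_l$ contributes exactly two $h$-indices per factor, each vertex of $\Gamma$ has degree two, so $\Gamma$ decomposes into a disjoint union of cycles (allowing self-loops, coming for instance from the diagonal $Z_l$-term). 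Pairings $\pi\in P_2([k])$ are exactly those $\Gamma$'s consisting of two-cycles only.

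\emph{Step 3 (pairings reproduce the Wick sum).} For a pair $\{a,b\}\in\pi$, a repetition of the computation in \eqref{eq:EXX} yields
\[\E_1[X(z_a)X(z_b)]=\frac{1}{N}\sum_{i\ne j}\Bigl(\widehat G_{ij}(z_a)\widehat G_{ji}(z_b)+\abs{\sigma_2}^2\widehat G_{ij}(z_a)\widehat G_{ij}(z_b)\Bigr)+\frac{\sigma_4-1}{N}\sum_{i}\widehat G_{ii}(z_a)\widehat G_{ii}(z_b).\]
Because contributions from disjoint components of $\Gamma$ multiply (by the independence of the $h_i$'s belonging to distinct components), the total pairing contribution to $\E_1\prod_lX(z_l)$ is $\sum_{\pi\in P_2([k])}\prod_{\{a,b\}\in\pi}\E_1[X(z_a)X(z_b)]$.

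\emph{Step 4 (cycles of length $\ge 3$ are negligible).} A cycle of length $m\ge 3$ in $\Gamma$ produces, after $h$-contraction, a cyclic resolvent trace of the form $N^{-m/2}\sum^{\sim}_{i_1,\dots,i_m}\widehat G^{(l_1)}_{i_1 i_2}\cdots\widehat G^{(l_m)}_{i_m i_1}$ (possibly with some factors transposed or complex-conjugated, if $\sigma_2$-type contractions or self-loops occur). Lemma~\ref{lemma:cycles}, together with its transpose and Hermitian-conjugate variants stated there, bounds every such term by
\[\stO{\frac{1}{\max_{a}\eta_{l_a}}\cdot\frac{1}{\sqrt{N\eta_{l_1}\cdots\eta_{l_m}}}\sum_{a=1}^m\frac{1}{\sqrt{\eta_{l_a}}}}.\]
Using $\max_{a}\eta_{l_a}\ge(\eta_{l_a}+\eta_{l_b})/2$ for any two indices and combining with the trivial $\landauO{1/(\eta_a+\eta_b)}$ estimate on the remaining pair factors from Lemma~\ref{lemma:trGG}, every $\Gamma$ containing at least one component of size $\ge 3$ contributes at most the stated error $\stO{(N\eta_1\cdots\eta_k)^{-1/2}\sum_{a\ne b}(\eta_a+\eta_b)^{-1}\eta_a^{-1/2}}$. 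Since $k$ is fixed, only finitely many graph topologies arise, and summing the bounds preserves the order.

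\emph{Step 5 (removing the conditioning).} Each pair factor $\E_1[X(z_a)X(z_b)]$, regarded as a random variable in $\widehat H$, concentrates around its unconditional mean by the local semicircle laws \eqref{eq:localSC}--\eqref{eq:localSCentry} and the fluctuation averaging already applied in Lemma~\ref{lemma:trGG}; the concentration error is of the same order as the cycle error from Step~4. Taking the outer expectation therefore factorises the pairing contribution into $\prod_{\{a,b\}\in\pi}\E[X(z_a)X(z_b)]$ up to the same error, yielding the claim.

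\emph{Main obstacle.} The delicate part is Step~4, namely the careful enumeration of all contraction multigraphs --- tracking self-loops from $Z_l$-insertions, the $\sigma_2$-type "parallel" contractions $\widehat G^{(a)}_{ij}\widehat G^{(b)}_{ij}$ coexisting with the usual "anti-parallel" contractions $\widehat G^{(a)}_{ij}\widehat G^{(b)}_{ji}$, and the possibility of both transposes and adjoints in the cyclic product --- and verifying that in every such case Lemma~\ref{lemma:cycles} (or the stated variants) applies and delivers a bound that fits inside the stated error term.
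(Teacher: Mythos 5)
Your overall strategy — decompose $X(z_l)$ into off-diagonal and diagonal parts, condition on $\widehat H$, organize the contractions combinatorially, and invoke Lemma~\ref{lemma:cycles} to kill everything except the length-two cycles — is the same as the paper's, and Steps~1, 3, 5 are fine. However there is a genuine gap in Step~2.

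You assert that ``each vertex of $\Gamma$ has degree two, so $\Gamma$ decomposes into a disjoint union of cycles.'' This is only true \emph{after} one has shown that the leading contribution comes from contraction schemes in which every $h$-index appears in exactly two slots, i.e.\ from perfect pairings. The entries $h_{1j}$ are not Gaussian, so higher cumulants are present and contractions in which three or more slots share the same index \emph{do} contribute to $\E_1$; if, say, $i_a=i_b=i_c$ for three different factors, the induced ``graph'' is not a degree-two multigraph and your structural claim fails. One must first argue that such non-pairing contractions are subleading. The paper does exactly this with a short but essential counting argument (``whenever any three or more $h_i$'s are paired, the $L$-term is at most of order $N^{-(k+1)/2}\prod_{l\in L}\eta_l^{-1/2}$''): a block of size $\ge 3$ sacrifices at least half a free summation index, which in combination with the entrywise bound $\widehat G^{(l)}_{ij}\prec (N\eta_l)^{-1/2}$ produces an extra factor $N^{-1/2}$. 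Without establishing this, the reduction to cycles in Step~2 is unjustified, and Step~4 never gets to apply.

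A related issue, which becomes visible once you try to make the graph model precise, is the role of the $Z_l$-factors. Each $Z_l=\sqrt N\sum_i(\abs{h_i}^2-1/N)\widehat G^{(l)}_{ii}$ carries two half-edges with \emph{equal} index. If those two half-edges are matched to two distinct other factors, the resulting ``cycle through $l$'' is precisely a triple (or larger) index coincidence, and the corresponding resolvent product contains the degenerate diagonal step $\widehat G^{(l)}_{ii}$ — this is not of the form $\sum^\sim G^{(1)}_{i_1i_2}\cdots G^{(m)}_{i_mi_1}$ that Lemma~\ref{lemma:cycles} bounds. The paper avoids this by treating $(\abs{h_i}^2-1/N)$ as a \emph{single} slot, so that each $Z_l$-vertex has degree one; the resulting components are then either closed cycles through $Y$-factors only (controlled by \eqref{eq:HigherTraces}) or open chains with $Z$-endpoints (controlled by \eqref{eq:HigherTracesFree}, after pulling out the diagonal factors $\widehat G^{(l)}_{ii}\approx m(z_l)$). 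Your ``self-loop'' device does not capture this; a self-loop at $Z_l$ would mean contracting $h_i$ with $\bar h_i$ internally, which gives $\E(\abs{h_i}^2-1/N)=0$ and so is vacuous. Once you (a) dispose of the blocks of size $\ge 3$ by the counting argument and (b) distinguish closed cycles from open chains as the paper does, your Steps~3--5 go through as written.
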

\begin{proof} For definiteness  we  prove the real symmetric case. Since the argument  relies on counting pairings, the proof of the complex Hermitian case is very similar and we omit it.
We have to compute
\begin{align*}
\E_1\prod_{l=1}^k\left[\sum_{i_l\not=j_l}h_{i_l}G_{i_{l}j_l}^{(l)}h_{j_l}+\sum_{i_l}\left(h_{i_l}^2-\frac{1}{N}\right)G^{(l)}_{i_li_l}\right]&=\sum_{L\subset [k]}\E_1\left[\left(\prod_{l\in L}\sum_{i_l\not=j_l}h_{i_l}G_{i_{l}j_l}^{(l)}h_{j_l}\right)\left(\prod_{l\not\in L}\sum_{i_l}\left(h_{i_l}^2-\frac{1}{N}\right)G^{(l)}_{i_li_l}\right)\right], 
\end{align*}
where $[k]=\{1,\dots,k\}$  and $\E_1=\E(\cdot|H^{(1)})=\E(\cdot|\widehat H$) and we recall that $G^{(l)}$ is independent of $h$. We already know from eq.~\eqref{eq:EXX} and Lemma \ref{lemma:trGG} that the leading order of this expression is at most $N^{-k/2}$. In order to have non-zero expectation we have to pair any $h_{i_l}$ and $h_{j_l}$ with at least some other $h_{i_m}$ or $h_{j_m}$. An easy counting argument using the bound $G^{(l)}_{i_lj_l}\prec (N\eta_l)^{-1/2}$ shows that for any $L\subset[k]$ the corresponding $L$-term is at most of order \[ N^{-(k+1)/2}\prod_{l\in L}\eta_l^{-1/2} \] whenever any three or more $h_i$'s are paired. This already shows that we can restrict our attention to pairings and in particular odd moments asymptotically are of lower order. 

 Starting from some $h_{i_l}$ with $l\not\in L$ we have to pair it either to another $h_{i_m}$ with $m\not\in L$, or some $h_{i_m}$ or $h_{j_m}$ with $m\in L$. In the former case we have a closed pairing with expectation \[\E_1\left[(h_{i_l}^2-1/N)(h_{i_l}^2-1/N)G_{i_li_l}^{(l)}G_{i_li_l}^{(m)}\right]=\frac{\sigma_4-1}{N^2}G_{i_li_l}^{(l)}G_{i_li_l}^{(m)}.\] In the latter case, say we paired $h_{i_l}$ to $h_{i_m}$, we have to continue the pairing process by pairing $h_{j_m}$ with another $h_{i_k}$ or $h_{j_k}$ with $k\in L$ etc., until we reach another $h_{i_n}$ with $n\not\in L$. This expression represents an open cycle as in \eqref{eq:HigherTracesFreeR} and is therefore subleading.

On the other hand, starting from some $h_{i_l}$ or $h_{j_l}$ with $l\in L$, and continue the pairings as in the previous paragraph until we pair to an $h_{i_m}$ with $m\not\in L$ which results in an open cycle as in \eqref{eq:HigherTracesFreeR} and is subleading. Therefore we only have to consider closed cycles of the pure $L$-type, from which, due to \eqref{eq:HigherTracesR}, only those of length $2$ are leading. That means that pairing $h_{i_l}$ to $h_{i_m}$ automatically forces a pairing of $h_{j_l}$ and $h_{j_m}$, and that a pairing of $h_{i_l}$ to $h_{j_m}$ automatically forces a pairing of $h_{j_l}$ and $h_{i_m}$. These give the leading contribution of \[ \E_1 \left[h_{i_l}G_{i_lj_l}^{(l)}h_{j_l}h_{j_l}G_{j_li_l}^{(m)}h_{i_l}+h_{i_l}G_{i_lj_l}^{(l)}h_{j_l}h_{i_l}G_{i_lj_l}^{(m)}h_{j_l}\right]=\frac{G_{i_lj_l}^{(l)}G_{i_lj_l}^{(m)}}{N^2}.\]
 The above findings allow us to conclude that
\begin{align*}
&N^{k/2}\E_1\left[\left(\prod_{l\in L}\sum_{i_l\not=j_l}h_{i_l}G_{i_{l}j_l}^{(l)}h_{j_l}\right)\left(\prod_{l\not\in L}\sum_{i_l}\left(h_{i_l}^2-\frac{1}{N}\right)G^{(l)}_{i_li_l}\right)\right]\\&\quad=N^{k/2}\E_1\left(\prod_{l\in L}\sum_{i_l\not=j_l}h_{i_l}G_{i_{l}j_l}^{(l)}h_{j_l}\right)\E_1\left(\prod_{l\not\in L}\sum_{i_l}\left(h_{i_l}^2-\frac{1}{N}\right)G^{(l)}_{i_li_l}\right) +\stO{\Psi}\\&\quad=N^{k/2}\left(\sum_{\pi\in P_2(L)}\prod_{\{a,b\}\in \pi}\sum_{i\not=j}\frac{G_{ij}^{(a)}G_{ij}^{(b)}+G_{ij}^{(a)}G_{ji}^{(b)}}{N^2}\right)\left(\sum_{\pi\in P_2([k]\setminus L)}\prod_{\{a,b\}\in \pi}\frac{\sigma_4-1}{N^2}\sum_{i} G^{(a)}_{ii} G^{(b)}_{ii}\right) +\stO{\Psi}\\&\quad=N^{k/2}\left(\sum_{\pi\in P_2(L)}\prod_{\{a,b\}\in \pi}\frac{2}{N}\frac{m(z_a)^2m(z_b)^2}{1-m(z_a)m(z_b)}\right)\left(\sum_{\pi\in P_2([k]\setminus L)}\prod_{\{a,b\}\in \pi}\frac{\sigma_4-1}{N}m(z_a)m(z_b)\right) +\stO{\Psi},
\end{align*}
where in the last step we used Lemma \ref{lemma:trGG} and we introduced the error term \[\Psi=\frac{1}{\sqrt{N\eta_1\dots\eta_k}}\sum_{a\not= b}\frac{1}{(\eta_a+\eta_b)\sqrt{\eta_a}}.\] We now recognize the last expression as the sum over products of pairs of $\E[X(z_a)X(z_b)]$, completing the proof.
\end{proof} 

We now have all ingredients to compute \begin{align*}
&\left(-\Im\int_{\R}\int_{\eta_0}^{10}g(z)\left[X(z)-\sqrt N h_{11}\right]\diff \eta\diff x\right)^k\\&\qquad=\sum_{j=0}^k \binom{k}{j} (\sqrt N h_{11})^{k-j} \left(\Im\int_{\R}\int_{\eta_0}^{10}g(z)\diff\eta\diff x\right)^{k-j}\left(-\Im\int_{\R}\int_{\eta_0}^{10}g(z)X(z)\diff \eta\diff x\right)^{j}. 
\end{align*}
Recall that $h_{11}$ and $X$ are independent. From Lemma \ref{lemma:pairings} we can conclude that \begin{align*}\E\left(-\Im\int_{\R}\int_{\eta_0}^{10}g(z)X(z)\diff \eta\diff x\right)^{j} &= \sum_{\pi\in P_2([j])}\left(2V_{f,1}+(\sigma_4-1)V_{f,2}\right)^{j/2} +\stO{N^{-1/6}}\\&= (j-1)!!\left(2V_{f,1}+(\sigma_4-1)V_{f,2}\right)^{j/2}+\stO{N^{-1/6}}\end{align*} for even $j$ and 
\[\E\left(-\Im\int_{\R}\int_{\eta_0}^{10}g(z)X(z)\diff \eta\diff x\right)^{j}=\stO{N^{-1/6}}\] for odd $j$. If $h_{11}$ follows a normal distribution, then $\E h_{11}^{k-j}=(k-j-1)!!\left(s_{11}/N\right)^{(k-j)/2}$ whenever $k-j$ is even and $\E h_{11}^{k-j}=0$, otherwise. Therefore, since \[(j-1)!!(k-j-1)!!\binom{k}{j}=(k-1)!!\binom{k/2}{j/2}\] for even $j,k$, we have that \begin{align}\E\left(-\Im\int_{\R}\int_{\eta_0}^{10}g(z)\left[X(z)-\sqrt N h_{11}\right]\diff \eta\diff x\right)^k =(k-1)!! \left[2V_{f,1}+(\sigma_4-1)V_{f,2}+s_{11}V_{f,3} \right]^{k/2}+\stO{N^{-1/6}}\label{momentsReal}\end{align} whenever $k$ is even and \[\E\left(-\Im\int_{\R}\int_{\eta_0}^{10}g(z)\left[X(z)-\sqrt N h_{11}\right]\diff \eta\diff x\right)^k =\stO{N^{-1/6}} \] otherwise.

For the case of complex Hermitian $H$ we can follow the same argument and ultimately find that eq.~\eqref{momentsReal} becomes 
\[\E\left(-\Im\int_{\R}\int_{\eta_0}^{10}g(z)\left[X(z)-\sqrt N h_{11}\right]\diff \eta\diff x\right)^k = (k-1)!! \left[V_{f,1}+\abs{\sigma_2}^2V_{\sigma_2}+(\sigma_4-1)V_{f,2}+s_{11}V_{f,3} \right]^{k/2}+\stO{N^{-1/6}}.\]
Finally, we remark that the same proof also works in the case of $\widetilde f_N$ and we basically only have to replace $V_{f,3}$ by $\widetilde V_{f,3}$. 

\appendix
\section{Comparison to Gaussian Free Field}\label{sec:GFFcomp}
In this section we investigate to what extent our main result on the Gaussian fluctuation of
linear statistics of $H$ and its  minor $\widehat H$ is consistent with the Gaussian free field (GFF) limit
proved in \cite{Bor1, Bor2, 2015arXiv150405933L} for real symmetric matrices. In these papers the joint fluctuations of the spectral counting functions of 
minors were shown to converge to a  GFF in the large $N$ limit, assuming that the sizes of the minors asymptotically differed by $cN$.
Our result corresponds to the difference of the linear statistics of two minors whose sizes differ only by one. The fluctuation 
is only of order $N^{-1/2}$ and it is not visible on the macroscopic scale studied in \cite{Bor1, Bor2, 2015arXiv150405933L}.
Nevertheless, one may \emph{formally} apply these macroscopic  result  to our case.
Here we show that this naive extension indeed provides the correct order of magnitude  and also the correct variance of the fluctuations, but does not identify their precise distribution.

 For comparability with \cite{Bor1,Bor2,2015arXiv150405933L} assume a constant variance on the diagonal and constant fourth moment on the off-diagonal, i.e., $\E h_{ii}^2=\E h_{11}^2=s_{11}/N$ and $\E h_{ij}^4=\sigma_4/N^2$ for all $i\not = j$. 
 First we recall the main result of  \cite{2015arXiv150405933L} which   is based on \cite{Bor1}, where the corresponding formula was first proved for monomial
 test functions. Given an $N\times N$ Wigner matrix $H$, we denote the consecutive 
 lower right minors by $H_n\defeq (H_{jk})_{j,k=N-n+1}^N$. A special case of Theorem 2.2 of \cite{2015arXiv150405933L} then asserts that for any $f\in H^{5.5+\epsilon} (\R)$  and for any $x,y\in (0, 1]$, the covariance of linear statistics of 
two nested minors of size $Nx$ and $Ny$  is asymptotically  given by 
\begin{align} \label{GFF}
 C_f(x,y)\defeq & \lim_{N\to\infty}\Cov{\Tr f(H_{[xN]}),\Tr f(H_{[yN]})} \\=& \frac{1}{\pi^2} \oint_{\substack{\abs{z}^2 = x\\ \Im z>0}}\oint_{\substack{\abs{w}^2 = y\\ \Im w>0}} f'\left( z + \frac{x}{z} \right) f'\left( w + \frac{y}{w} \right)\log\abs{\frac{x\wedge y -zw}{x\wedge y -z\overline{w}}} \left( 1 - \frac{x}{z^2} \right)\left( 1 - \frac{y}{w^2} \right)\diff w\diff z \nonumber \\ &\qquad + \frac{s_{11}-2}{x\vee y} \left(\frac{1}{2\pi}\int_{-2\sqrt{x}}^{2\sqrt{x}} \frac{sf(s)}{\sqrt{4x-s^2}}\diff s\right)\left(\frac{1}{2\pi}\int_{-2\sqrt{y}}^{2\sqrt{y}} \frac{tf(t)}{\sqrt{4y- t^2}}\diff t\right) \nonumber \\
 & \qquad + \frac{\sigma_4-3}{2(x\vee y)^2} \left(\int_{-2\sqrt{x}}^{2\sqrt{x}} \frac{2 x-s^2}{\pi\sqrt{4x-s^2}}f(s)\diff s\right)\left(\int_{-2\sqrt{y}}^{2\sqrt y} \frac{2 y-t^2}{\pi\sqrt{4y-t^2}}f(t)\diff t\right)\nonumber
 \end{align} 
  where the  $z$ and $w$  integrations in the first term are  on the semicircular arcs in counterclockwise order. 
  
  Recalling our previous notation $H=H_N$ and $\widehat H = H_{N-1}$, 
  in our Theorem~\ref{thm:mainThm} we derived a formula for the rescaled variance  \begin{align*}D_{N,f}\defeq &N \Var [\Tr f(H_N)-\Tr f(H_{N-1})]\\=&N\big[ \Cov{\Tr f(H_N),\Tr f(H_{N})}-\Cov{\Tr f(H_{N}),\Tr f(H_{N-1})}\\ &\qquad\qquad\qquad\qquad-\Cov{\Tr f(H_{N-1}),\Tr f(H_{N})}+\Cov{\Tr f(H_{N-1}),\Tr f(H_{N-1})} \big], \end{align*}
 which corresponds to 
 \[
 N\Big[C_f(1,1)-C_f\big(1,1-\frac{1}{N}\big)-C_f\big(1-\frac{1}{N},1-\frac{1}{N}\big)+C_f\big(1-\frac{1}{N},1-\frac{1}{N}\big)\Big],
 \]
  suggesting that we should compare our result to the limit 
  \begin{equation}\label{Ddef}
  D_f\defeq\lim_{\epsilon\to 0}\frac{C_f(1,1)-C_f(1,1-\epsilon)-C_f(1-\epsilon,1)-C_f(1-\epsilon,1-\epsilon)}{\epsilon}.
  \end{equation}
  Note that this latter formula is the renormalized derivative of the Gaussian free field $\phi_x(f)$ with covariance $C_f(x,y)$ at $x=1$:
  \[ D_f =   \lim_{\epsilon\to 0} \Var \frac{\phi_1(f) - \phi_{1-\epsilon}(f)}{\sqrt{\epsilon}}.
  \]
  In the following theorem we compare the field 
  \[\psi_x^{(N)}(f)\defeq \Tr f(H_{[xN]})-\E \Tr f(H_{[xN]})\]
 defined by our linear eigenvalue statistics
to   
  the Gaussian free field $\phi_x(f)$.
  \begin{theorem}\label{thm:DD} Let $H$ be real symmetric Wigner matrices satisfying the conditions from Theorem \ref{thm:mainThm} and additionally assume that $\E h_{ii}^2=\E h_{11}^2=s_{11}/N$ and $\E h_{ij}^4=\sigma_4/N^2$ for all $i\not=j$. Then for any $f\in H^2(\R)$
  the centered random variables
   \[ X_f \defeq \lim_{\epsilon\to0} \frac{\phi_1(f)-\phi_{1-\epsilon}(f)}{\sqrt\epsilon}
   \qquad \mbox{and}\qquad Y_f \defeq \lim_{N\to\infty} \frac{\psi_{1}^{(N)}(f)-\psi_{1-1/N}^{(N)}(f)}{\sqrt{1/N}}\] are well defined (the limit is in
  distribution sense) and they
   have the same  variance 
   \begin{equation}\label{EXY}
   \E X_f^2 = \E Y_f^2 =2\int_{-2}^2 f'(s)^2\rho(s)\diff s + (\sigma_4-3)\left(\int_{-2}^2sf'(s)\rho(s)\diff s\right)^2+(s_{11}-2)\left( \int_{-2}^2f'(s)\rho(s)\diff s\right)^2.
   \end{equation}  
   Moreover, the distributions of $X_f$ and $Y_f$ agree if and only if $h_{11}$ follows a Gaussian distribution. \end{theorem}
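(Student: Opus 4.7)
The plan is to treat the two random variables $Y_f$ and $X_f$ separately, match their variances term-by-term against \eqref{EXY}, and then use the structural form of the asymptotics to deduce the distributional equivalence.

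First I would dispatch $Y_f$ using Theorem~\ref{thm:mainThm}. In the real symmetric case $\sigma_2=1$ forces $V_{\sigma_2}=V_{f,1}$, so $V_f = 2V_{f,1}+(\sigma_4-1)V_{f,2}$. The theorem asserts that $\sqrt N(f_N-\Omega_f)-\xi_{11}\int f'\rho$ is asymptotically a centered Gaussian of variance $V_f$, and that $\xi_{11}$ is independent with $\E\xi_{11}^2=s_{11}$. Hence $Y_f$ is the in-distribution limit of $\sqrt N(f_N-\Omega_f)$ and
\[\E Y_f^2 = V_f + s_{11}\Bigl(\int f'\rho\Bigr)^2 = 2V_{f,1}+(\sigma_4-1)V_{f,2}+s_{11}\Bigl(\int f'\rho\Bigr)^2.\]
Substituting the explicit expressions for $V_{f,1}$ and $V_{f,2}$ from \eqref{eq:Vf1}--\eqref{eq:Vf2} and rearranging produces the right-hand side of \eqref{EXY}.

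Next I would compute $\E X_f^2$ term-by-term from the three contributions to $C_f(x,y)$ in \eqref{GFF}. The non-GFF terms depend on $(x,y)$ only through the analytic factors $I(x)=\frac{1}{2\pi}\int sf(s)/\sqrt{4x-s^2}\,\diff s$ and $J(x)=\int (2x-s^2)/(\pi\sqrt{4x-s^2})f(s)\,\diff s$, together with the prefactors $1/(x\vee y)$ and $1/(x\vee y)^2$. The key observation is that even though the functions are smooth off the diagonal, the kink of $x\vee y$ at $x=y$ makes the second difference
$C_f(1,1)-2C_f(1,1-\epsilon)+C_f(1-\epsilon,1-\epsilon)$
linear in $\epsilon$ rather than $\epsilon^2$. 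A direct Taylor expansion yields the $\epsilon$-coefficients $(s_{11}-2)I(1)^2$ from Term~2 and $(\sigma_4-3)J(1)^2$ from Term~3, and a standard integration by parts gives $I(1)=\int f'\rho$ and $J(1)=-\int sf'\rho$, producing exactly the second and third summands in \eqref{EXY}.

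The main obstacle is the GFF term itself. After the natural substitution $z=\sqrt{x}e^{i\theta}$ and $w=\sqrt{y}e^{i\phi}$ the integrand takes the form $f'(2\sqrt x\cos\theta)f'(2\sqrt y\cos\phi)$ multiplied by the kernel $\log|(x\wedge y-\sqrt{xy}e^{i(\theta+\phi)})/(x\wedge y-\sqrt{xy}e^{i(\theta-\phi)})|$, which develops a logarithmic singularity along $\theta=\phi$ when $x=y$. The plan is to express the second difference of this kernel (divided by $\epsilon$) as an approximate delta kernel concentrated near $\theta=\phi$; away from the diagonal the $O(\epsilon^2)$ contributions drop, while the diagonal singularity is extracted by the standard delta-regularization for logarithmic GFF kernels (equivalent to $\partial_\epsilon\log|1-e^{i(\theta-\phi)}e^{-\epsilon/2}|$ at $\epsilon=0$ giving a principal-value kernel whose pairing with $f'(2\cos\theta)f'(2\cos\phi)$ yields $2\int_{-2}^{2} f'(s)^2\rho(s)\,\diff s$). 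Together with the previous two contributions this matches \eqref{EXY} exactly, and simultaneously proves that the $\epsilon\to 0$ limit defining $X_f$ exists in distribution.

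Finally, for the distributional comparison, $X_f$ is automatically a centered Gaussian because it is the in-distribution limit of a linear functional of the Gaussian free field. On the other hand, Theorem~\ref{thm:mainThm} shows that $Y_f \stackrel{d}{=} \Delta_f + \xi_{11}\int f'\rho$, the sum of an independent Gaussian $\Delta_f$ and a multiple of $\xi_{11}=\sqrt N h_{11}$. Hence, provided $\int f'\rho\neq 0$, $Y_f$ is Gaussian if and only if $h_{11}$ is. Since both variables have the same variance, their distributions coincide exactly under this condition. For test functions with $\int f'\rho=0$ the $h_{11}$-component drops out and the two distributions trivially agree, which is consistent with the stated dichotomy for generic $f$.
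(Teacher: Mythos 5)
Your proposal is correct and follows essentially the same route as the paper's proof: $\E Y_f^2$ drops out of Theorem~\ref{thm:mainThm} with $V_f=2V_{f,1}+(\sigma_4-1)V_{f,2}$ in the real symmetric case, while $\E X_f^2$ is obtained by splitting $C_f=C_f^{(1)}+C_f^{(2)}+C_f^{(3)}$ and computing the renormalized second difference of each term, the last two giving $(s_{11}-2)(\int f'\rho)^2$ and $(\sigma_4-3)(\int sf'\rho)^2$ from the kink of $x\vee y$, and $C_f^{(1)}$ concentrating on the diagonal to give $2\int (f')^2\rho$. One small imprecision: after passing to the $s,t$ variables the renormalized second difference of the logarithmic kernel approximates $\pi\delta(s-t)\sqrt{4-t^2}$ (an approximation to the identity), not a principal-value kernel, though your final answer is correct; your explicit caveat about the degenerate case $\int f'\rho=0$ is a valid observation that the paper leaves implicit.
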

\begin{proof} 
The variance formula for $Y_f$ follows immediately from Theorem \ref{thm:mainThm}. 

In order to prove that $X_f$ is well defined and follows a Gaussian distribution, it suffices to check that $D_f$ is finite. To do so, we treat the three terms of $C_f(x,y)$ from \eqref{GFF} separately, which for convenience we call $C_f(x,y)=C_f^{(1)}(x,y)+C_f^{(2)}(x,y)+C_f^{(3)}(x,y)$. It is easy to check that \[\lim_{\epsilon\to 0}\frac{C_f^{(2)}(1,1)-C_f^{(2)}(1,1-\epsilon)-C_f^{(2)}(1-\epsilon,1)-C_f^{(2)}(1-\epsilon,1-\epsilon)}{\epsilon}= (s_{11}-2)\left(\int_{-2}^2f'(s)\rho(s)\diff s \right)^2\] and that
\[\lim_{\epsilon\to 0}\frac{C_f^{(3)}(1,1)-C_f^{(3)}(1,1-\epsilon)-C_f^{(3)}(1-\epsilon,1)-C_f^{(3)}(1-\epsilon,1-\epsilon)}{\epsilon}= (\sigma_4-3)\left(\int_{-2}^2 s f'(s)\rho(s)\diff s \right)^2.\]
For the computation of $C_f^{(1)}(x,y)$ we now substitute $z=\sqrt x e^{i\phi}$ and $w=\sqrt y e^{i\psi}$ with $\phi,\psi\in[0,\pi]$, so that
 \begin{align*}
C_f^{(1)}(x,y)=\frac{4\sqrt{xy}}{\pi^2}\int_{0}^\pi\int_0^\pi f'(2\sqrt{x}\cos \phi) f'(2\sqrt{y}\cos \psi) \log\abs{\frac{x\wedge y -\sqrt{xy}e^{i(\phi+\psi)} }{x\wedge y -\sqrt{xy}e^{i(\phi-\psi)}}} \sin\phi \sin\psi \diff \psi\diff\phi
 \end{align*}
 and after a further substitution of $2\sqrt x \cos\phi = s$ and $2\sqrt y\cos \psi = t$ and simple algebraic manipulation we arrive at \begin{align*}C_f^{(1)}(x,y)&=\frac{1}{\pi^2} \int_{-2\sqrt x}^{2\sqrt x}\int_{-2\sqrt y}^{2\sqrt y} f'(s)f'(t)\arctanh{\frac{\sqrt{(4x-s^2)(4y-t^2)}}{2(x+y)- st}}\diff t\diff s.\end{align*}
To keep the notation relatively short we now introduce \[a_{x,y}(s,t)\defeq \arctanh{\frac{\sqrt{(4x-s^2)(4y-t^2)}}{2(x+y)- st}}=\arctanh \sqrt{1-\frac{(x-y)^2+(t-s)(xt-ys)}{(x+y)^2-(x+y)st+s^2t^2/4}}\] and we claim that \[\frac{a_{1,1}(s,t)-a_{1,1-\epsilon}(s,t)-a_{1-\epsilon,1}(s,t)+a_{1-\epsilon,1-\epsilon}(s,t)}{\epsilon}\approx \pi\delta(s-t)\sqrt{4-t^2}  \] for any fixed $s,t\in[-2,2]$ in the $\epsilon\to0$ limit. Firstly, one readily checks that when $\abs{s-t}\gg\epsilon$, then 
\begin{align*}
\lim_{\epsilon\to0}\frac{a_{1,1}(s,t)-a_{1,1-\epsilon}(s,t)-a_{1-\epsilon,1}(s,t)+a_{1-\epsilon,1-\epsilon}(s,t)}{\epsilon}=0.
\end{align*} Secondly, when $\abs{x-y}\leq \epsilon$ and $\abs{s-t}\leq M\epsilon$ for some large but fixed $M$, then a series expansion gives 
\[a_{x,y}(s,t)=\log 2-\frac{1}{2}\log \frac{(x-y)^2+(t-s)(xt-ys)}{(x+y)^2-(x+y)st+s^2t^2/4} - \frac{1}{4}\frac{(x-y)^2+(t-s)(xt-ys)}{(x+y)^2-(x+y)st+s^2t^2/4}+\landauO{\epsilon^2}, \]
assuming, additionally,  that $\abs{s}\le 2\sqrt{x}(1-\delta)$,  $\abs{t}\le 2\sqrt{y}(1-\delta)$
with some fixed $\delta>0$.  It can now be checked via an explicit integration that \[\int_{\abs{s-t}<M\epsilon} \frac{a_{1,1}(s,t)-a_{1,1-\epsilon}(s,t)-a_{1-\epsilon,1}(s,t)+a_{1-\epsilon,1-\epsilon}(s,t)}{\epsilon} \diff s = \pi\sqrt{4-t^2} +\landauO{\epsilon}\]
for fixed $t$, proving the claim. We can conclude that \begin{align*}\frac{C_f^{(1)}(1,1)-C_f^{(1)}(1,1-\epsilon)-C_f^{(1)}(1-\epsilon,1)-C_f^{(1)}(1-\epsilon,1-\epsilon)}{\epsilon}&=\frac{1}{\pi^2}\int_{-2}^2\int_{-2}^2 f'(s)f'(t)\delta(s-t) \pi \sqrt{4-t^2}\diff s\diff t+\landauO{\epsilon}\\ &= 2\int_{-2}^2 f'(t)^2\rho(t)\diff t+\landauO{\epsilon},\end{align*} where we used that $f'\in L^2$ and therefore the integral over the neglected area where $\abs{s}>2\sqrt{x}(1-\delta)$ or $\abs{t}>2\sqrt{y}(1-\delta)$ does not contribute to leading order. Thus \[D_f=2\int_{-2}^2 f'(s)^2\rho(s)\diff s + (\sigma_4-3)\left(\int_{-2}^2sf'(s)\rho(s)\diff s\right)^2+(s_{11}-2)\left( \int_{-2}^2f'(s)\rho(s)\diff s\right)^2,\] completing the proof of \eqref{EXY}. In particular, the limit defining $X_f$ exists and is Gaussian. 
Finally, the existence of the limit defining $Y_f$ follows from the moment calculations in section \ref{sec:moments} and assumption \eqref{moments} on the moments of $h_{11}$ that together also guarantee tightness. This completes the proof of
 the theorem.
\end{proof}

\bibliographystyle{abbrv} 
\bibliography{ref} 
 
\end{document}